\numberwithin{equation}{section}
\newtheorem{example}{Example}[section]
\newtheorem{theorem}[example]{Theorem}
\newtheorem{remark}[example]{Remark}
\newtheorem{lemma}[example]{Lemma}
\newtheorem{proposition}[example]{Proposition}
\DeclareMathOperator{\Ric}{Ric}
\DeclareMathOperator{\Hess}{Hess}
\DeclareMathOperator{\tr}{tr}
\DeclareMathOperator{\Rm}{Rm}
\DeclareMathOperator{\R}{R}
\begin{document}
\title{Hyperbolic thermostat and Hamilton's Harnack inequality for the Ricci flow}
\author{Tatsuhiko Kobayashi}
\date{\today}
\maketitle
\section{Introduction}
The Ricci flow was introduced by Hamilton \cite{H5} with motivation in solving the Poincar\'{e} conjecture. 
We mean by the Ricci flow a pair $(M,g)$ of a smooth manifold $M$ and an evolving Riemannian metric $g=g(x,t)$ obeying the evolution equation
\begin{equation}\label{eq:100}
\frac{\partial g(x, t)}{\partial t }=-2 \Ric _g (x, t ) \, .
\end{equation}
The Ricci flow equation is invariant under the action of the group Diff$(M)$ of diffeomorphisms of $M$, 
which means that the Ricci flow is interpreted as a gauge theory with gauge group Diff$(M)$. 
This implies, via the (contracted) second Bianchi identity, that the Ricci flow equation is not parabolic, but only weakly parabolic. 
This causes a difficulty in proving the existence of the solution to the Ricci flow for a given smooth initial metric. 
In \cite{H1}, Hamilton proved the short time existence on a closed manifold, which later was greatly simplified by DeTurck \cite{De}. \\
\\
During 1980-1990's, it was inevitable to put some assumption on the curvature of the initial or the solution metric for the study of the Ricci flow. 
Nevertheless, Hamilton \cite{H1} was able to establish a program (Hamilton program) toward proving Thurston's geometrization conjecture \cite{TH} 
by studying the time-global solution metric for the Ricci flow with arbitrary initial metric. 
Recall that Thurston's geometrization conjecture claims that any closed 3-manifold is decomposed into pieces each having one of the eight maximal model geometries. 
Here, the decomposition means first the connected sum decomposition into prime components 
and second the torus decomposition of each prime component into pieces having one of the eight maximal model geometries.\\
\\
According to Hamilton's program, the occurrence of the decomposition into the prime components is the 
effect under the formation of singularities in finite time for the Ricci flow. The difficulty 
of the proof of the ``no local collapsing property'' under the formation of singularities 
in finite time for the Ricci flow had been the major difficulty against the progress 
of Hamilton's program. In this direction, Hamilton's Harnack inequality [8], which 
compares the curvature of the Ricci flow at two points in the space-time, was the 
most prominent result in the study of the Ricci flow obtained 
in the period 1980-1990's: 
\begin{theorem}[Hamilton \cite{H1}]
Let $(M,g_{ij}(t))$ be a complete Ricci flow for $t \in (0, T] \subset \mathbb{R}^{+}$ with uniformly bounded curvature 
in the sense that there exists a constant $C$ such that $| \Rm | \leq C$. 
Suppose that $(M, g(t))$ has a weakly positive curvature operator (which means a nonnegative curvature operator). 
Let 
\begin{equation}\label{s:1}
\begin{aligned}
M_{ij}&:=\Delta R_{ij}-\frac{1}{2}\nabla _i \nabla _j \R +2R_{ikjl}R^{kl}-R_{ik}R_{j}^{k}+\frac{1}{2t}R_{ij} \, ,\\
P_{ijk}&:=\nabla _i R_{jk}-\nabla _j R_{ik} \, .
\end{aligned}
\end{equation}
Then, the Harnack expression 
\begin{equation}\label{he:1}
Z:=R_{ijkl}U^{ij}U^{kl}+2P_{ijk}U^{ij}X^k+M_{ij}X^i X^j
\end{equation}
is weakly positive for any two-forms $U^{ij}$ and any one-forms $X^k$. 
\end{theorem}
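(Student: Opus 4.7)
The plan is to apply Hamilton's tensor maximum principle to the quadratic form $Z$, viewed as a section of the bundle $\mathrm{Sym}^2(\Lambda^2 TM \oplus TM)$ over $M\times(0,T]$. The strategy has three parts: first derive the evolution equation for $Z$; second identify reaction terms that preserve $Z\ge 0$ under the hypothesis of nonnegative curvature operator; and third check that the explicit $\frac{1}{2t}R_{ij}$ in $M_{ij}$ forces $Z>0$ for short times, which supplies the initial condition needed for the maximum principle.

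First I would fix at a point a two-form $U^{ij}$ and a one-form $X^k$, extend them to achieve $\nabla U = 0$ and $\nabla X = 0$ at that point (and also $\partial_t U = \partial_t X = 0$), and compute $(\partial_t - \Delta)Z$ using the standard Ricci flow evolution equations
\begin{align*}
\partial_t R_{ijkl} &= \Delta R_{ijkl} + 2(B_{ijkl}-B_{ijlk}+B_{ikjl}-B_{iljk}) - (\text{lower order in }\Ric\ast\Rm),\\
\partial_t R_{ij} &= \Delta R_{ij} + 2R_{ikjl}R^{kl} - 2R_{ik}R_j^{\;k},\qquad \partial_t \R = \Delta\R + 2|\Ric|^2,
\end{align*}
together with the commutator $[\nabla,\Delta]$ acting on Ricci. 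From these I would derive the evolution equations for $P_{ijk}$ and $M_{ij}$; the choice of coefficient $\tfrac{1}{2t}$ in $M_{ij}$ is dictated precisely by the requirement that the resulting reaction term close up into a perfect square. The title's \emph{hyperbolic thermostat} reformulation is presumably used here to repackage the Harnack quadratic as a curvature object of an auxiliary Lorentzian/degenerate metric on $M\times(0,T]$, so that the computation is reduced to an application of the Bianchi identities rather than a brute-force tensor manipulation.

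The heart of the argument is to establish a differential inequality of the schematic form
\begin{equation*}
(\partial_t - \Delta) Z \;\ge\; 2\,Q(Z,Z) \;-\; \frac{1}{t}\,Z,
\end{equation*}
where $Q(Z,Z)$ is a quadratic contraction of $Z$ with the curvature operator that is manifestly nonnegative whenever $\Rm\ge 0$ and $Z\ge 0$ in the fiber sense. This step is the main obstacle: the cubic curvature terms generated by $(\partial_t-\Delta)R_{ijkl}$, $(\partial_t-\Delta)R_{ij}$, and the second Bianchi identity applied twice must cancel in exactly the combination $R_{ijkl}U^{ij}U^{kl} + 2P_{ijk}U^{ij}X^k + M_{ij}X^iX^j$, and the residual negative $-\tfrac{1}{t}Z$ must be absorbed by the derivative of the $\tfrac{1}{2t}R_{ij}$ term. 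The sign of $Q(Z,Z)$ relies crucially on weak positivity of the curvature operator, since only this gives $R_{ikjl}A^{kl}A^{ij}\ge 0$ for symmetric $A$ and analogous positivity for higher contractions.

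Finally I would apply Hamilton's maximum principle for systems (in the form valid for complete flows with uniformly bounded curvature, as is assumed here via $|\Rm|\le C$). For the initial condition at $t\to 0^+$, the term $\tfrac{1}{2t}R_{ij}X^iX^j$ is dominant whenever $\Ric(X,X)>0$, so $Z$ is strictly positive for small times on the nondegenerate locus; degenerate directions are handled by replacing $Z$ with $Z+\varepsilon(|U|^2+|X|^2)/t$, running the same argument, and letting $\varepsilon\downarrow 0$. Once the resulting preserved cone is closed and convex and invariant under parallel transport, the maximum principle yields $Z\ge 0$ on $M\times(0,T]$, which is the claim.
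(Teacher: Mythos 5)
Your outline has the right broad shape (evolution equations for $R_{ijkl}$, $P_{ijk}$, $M_{ij}$, a maximum-principle argument, a perturbation to handle the noncompact/degenerate issues), but the extension you propose for $U$ and $X$ at the test point is a genuine error that would derail the computation. You suggest extending so that $\nabla U = 0$, $\nabla X = 0$, and $\partial_t U = \partial_t X = 0$ at the point. Neither Hamilton's original argument nor the paper's proof uses this trivial extension, and with it the reaction term does \emph{not} close up into a perfect square. The paper's Lemma~\ref{l:3} requires the specific nontrivial extension
\begin{equation*}
\bar{\nabla}_k \bar{U}^{ij} = \tfrac{1}{4t}\bigl(\bar{\delta}^i_k \bar{U}^{0j} - \bar{\delta}^j_k \bar{U}^{0i}\bigr), \quad
\bar{\nabla}_i \bar{U}^{0j} = 0, \quad
(\bar{\nabla}_0 - \bar{\Delta})\bar{U}^{ij} = 0, \quad
(\bar{\nabla}_0 - \bar{\Delta})\bar{U}^{0i} = \tfrac{1}{2t}\bar{U}^{0i},
\end{equation*}
and Remark~\ref{la:1} makes explicit that this is modeled on the gradient expanding soliton (it is the $R_{ij}=0$ specialization of Hamilton's extension $(\ref{eq:107})$). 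The $\frac{1}{4t}$ and $\frac{1}{2t}$ factors are exactly what produce the Bianchi-identity contractions and cancellations that let the right-hand side of $(\bar{\nabla}_0 - \bar{\Delta})\overline{\Rm}(\bar{U},\bar{U})$ be written as a manifest sum of squares as in Remark 4.2. If you take $\nabla U = 0$ you lose the $-\frac{2}{t}P_{ijk}\bar{U}^{ij}\bar{U}^{0k}$ and $-\frac{1}{2t^2}R_{ij}\bar{U}^{0i}\bar{U}^{0j}$ type contributions from the $\nabla U \cdot \nabla \Rm$ and $\Rm \cdot \nabla U \cdot \nabla U$ terms, and you are left with a reaction term that does not have a sign.

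Relatedly, your schematic $(\partial_t - \Delta) Z \geq 2Q(Z,Z) - \frac{1}{t}Z$ is not the structure that arises. With the correct extension there is no residual $-\frac{1}{t}Z$ to absorb; the right-hand side of $(\ref{eq:7})$ is outright nonnegative when $Z\geq 0$ and $\Rm \geq 0$. The explicit $\frac{1}{2t}R_{ij}$ in $M_{ij}$ serves to make $Z$ strictly positive for small $t$ (as you correctly identify), not to pay down a debt during evolution. Finally, for the noncompact case your proposed perturbation $Z + \varepsilon(|U|^2+|X|^2)/t$ with constant $\varepsilon$ is insufficient: one needs the barrier to diverge at spatial infinity so the perturbed quantity attains a negative minimum at an interior point. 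The paper uses Shi's exhaustion function to build $\phi(X,t)=\varepsilon e^{At}f(X)$ with $f \to \infty$, which achieves this; a constant $\varepsilon$ would not. The paper also works with the thermostat quantity $\overline{\Rm}(\bar{U},\bar{U})$ on $\bar{M}=M\times(0,T]$ rather than directly with $Z$ on $M$, which is cosmetically different but gives the same inequalities; the substantive difference from your proposal is the extension choice, not the framing.
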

There are several known Harnack type inequalities: Li-Yau \cite{LY} for the scalar heat flow and Hamilton \cite{H2} for the mean curvature flow 
and \cite{H3} for the Ricci flow on surfaces. 
As was shown in \cite{P1}, the Harnack inequality plays an essential role in the study 
of the finite-time singularities of the Ricci flow. Since the Ricci flow equation is only 
weakly parabolic due to its diffeomorphism invariance, the concept of a self-similar 
solution, i.e., a special solution to the Ricci flow equation which evolves under a 
1-parameter family of diffeomorphisms (coupled with scalings) makes sense and 
such a solution is named a Ricci soliton. Therefore, the Ricci soliton should play 
an essential role in the proof of the Harnack inequality for the Ricci flow. 
In fact, using the equation of the expanding Ricci soliton was the basic idea of Hamilton's proof of the Harnack inequality. 
Moreover, Hamilton proved the Harnack inequality by applying the maximum principle to the Harnack 
expression which was constructed from the equation of the gradient expanding Ricci 
soliton. Hamilton's Harnack inequality is ``mysterious'' in the sense that it is proved 
under the assumption of the nonnegative curvature operator, while the expanding 
Ricci soliton generalizes the Einstein metric with negative Ricci curvature. \\
\\
It was Perelman \cite{P1} who introduced the $\mathcal{W}$-entropy (which is defined in section 5) and used its monotonicity 
under the Ricci flow to prove the no local collapsing property of the finite-time 
singularities of the Ricci flow. Moreover, Perelman was able to establish 
the propagation of the no local collapsing property to the space-time by introducing the 
reduced volume and proving their monotonicity under the Ricci flow. This way, 
Perelman was able to prove Thurston's geometrization conjecture. It was 
remarkable that Perelman combined the $\mathcal{W}$-entropy / the reduced volume and Hamilton's 
Harnack inequality in the analysis of the finite-time singularities 
(the determination of the structure of the ancient solution with no collapsing condition). \\
\\
In \cite{P1}, Perelman introduced the concept of the ``Riemannian geometric thermostat'' (which we will describe in section 3.1) and 
developed a statistical theory following the standard formalism of the statistical 
mechanics. It seems that it was the way how Perelman discovered these functionals 
having the monotonicity under the Ricci flow. The theory of the thermostat is a 
heuristic framework which produces basic quantities such as the reduced volume and 
the $\mathcal{W}$-entropy of the Ricci flow. This also gives hints (the concept of the $\mathcal{L}$-length) 
to rigorous proofs of their basic properties. By developing the $\mathcal{L}$-geometry, i.e., the 
comparison geometry based on the $\mathcal{L}$-length, Perelman was able to give rigorous 
proofs to results obtained by heuristic arguments based on the 
Riemannian geometric thermostat. \\
\\
Perelman proposed the ``correct position'' where Hamilton's 
Harnack inequality lives, i.e., the $\mathcal{L}$-geometry which emerges from the Riemannian 
geometric thermostat. Therefore, it is a conceptually interesting problem to search 
for a reason why Hamilton's Harnack inequality holds in the framework of the theory of 
the Riemmanian geometric thermostat. \\
\\
The purpose of this paper is to propose a geometric interpretation to Hamilton's Harnack inequality. 
There are several known results: Chow - Chu \cite{CC}, Chow - Knopf \cite{CK}, and Cabezas-Rivas - Topping \cite{TOP2}. 
In particular, the last paper introduced the canonical expanding soliton which unifies Hamilton's result and Brendle's result \cite{BR}. 
Moreover, Cabezas-Rivas - Topping was able to obtain some new Harnack inequalities for the Ricci flow 
from the view point of the canonical expanding soliton. \\
\\
In this paper, we introduce a variant of Riemannian geometric thermostat namely the hyperbolic thermostat, which we now describe. 
\begin{theorem}\label{th:1}
Let $(M,g_{ij}(t))$ be a complete Ricci flow for $t \in (0, T] \subset \mathbb{R}^{+}$ with uniformly bounded curvature, 
and $(\mathbb{H}^N,g_{\alpha \beta})$ be an $N$-dimensional hyperbolic space with constant sectional curvature $-\frac{1}{2N}$ for $N \in \mathbb{N}$. 
We define $\tilde{M}=M \times \mathbb{H}^N \times (0,T ]$, and a metric $\tilde{g}$ on $\tilde{M}$ as follows:
\begin{align*}
\tilde{g}_{ij}=g_{ij} \, , \quad
\tilde{g}_{\alpha \beta}= t g_{\alpha \beta } \, ,\quad
\tilde{g}_{00}=\R -\frac{N}{2t } \, , \quad \tilde{g}_{i \alpha} =\tilde{g}_{0 i} = \tilde{g}_{0 \alpha}=0 \, ,
\end{align*}
where $i, j$ are coordinate indices on the $M$ factor, $\alpha , \beta$ are coordinate indices on the $\mathbb{H}^N$ factor, 
0 represent the index of the time coordinate $t$, and $\R$ is the scalar curvature with respect to the metric $g_{ij}$. 
Then, the Ricci tensor $\widetilde{\Ric}$ is equal to zero up to errors of order $\frac{1}{N}$, i.e., $(\tilde{M}, \tilde{g})$ is Ricci flat up to errors of order $\frac{1}{N}$. 
Moreover, Hamilton's Harnack expression appears as the full curvature operator with respect to the metric $\tilde{g}$ up to errors of order $\frac{1}{N}$. 
\end{theorem}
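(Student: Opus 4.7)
The plan is a direct componentwise computation of the full Riemann tensor $\widetilde{\Rm}$ and Ricci tensor $\widetilde{\Ric}$ of the block-diagonal metric $\tilde{g}$, exploiting the fact that the only coupling between the three factors ($M$, $\mathbb{H}^N$, and time) is through the Ricci flow equation $\partial_t g_{ij} = -2R_{ij}$, the warping $tg_{\alpha\beta}$ of the hyperbolic factor, and the $t$- and $x$-dependence of the lapse $\R - N/(2t)$. First I would list the Christoffel symbols of $\tilde{g}$: the intrinsic ones are inherited from $(M,g)$ and $(\mathbb{H}^N,g_{\alpha\beta})$, while the non-trivial mixed ones are $\widetilde{\Gamma}^0_{ij}$ and $\widetilde{\Gamma}^j_{0i}$ (proportional to $R_{ij}$), $\widetilde{\Gamma}^0_{\alpha\beta}$ and $\widetilde{\Gamma}^\beta_{0\alpha}$ (arising from the warping factor $t$), and $\widetilde{\Gamma}^0_{00}$, $\widetilde{\Gamma}^i_{00}$ (involving $\nabla_i \R$ and $\partial_t \R = \Delta\R + 2|\Ric|^2$). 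Throughout, one must keep track of the expansion $(\widetilde{g}_{00})^{-1} = -\tfrac{2t}{N}\bigl(1 + \tfrac{2t\R}{N} + O(N^{-2})\bigr)$.

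Next, I would read off the Riemann components block-by-block. The purely $M$ block $\widetilde{R}_{ijkl}$ recovers $R_{ijkl}$ modulo products of the $R_{ij}$-type Christoffels, which are $O(1/N)$. The purely hyperbolic block contributes the constant sectional curvature $-1/(2N)$; the $M$--$\mathbb{H}$ mixed sectional curvatures vanish; and the time--$\mathbb{H}$ components $\widetilde{R}_{0\alpha 0\beta}$ traced against $\tilde{g}^{\alpha\beta} = t^{-1}g^{\alpha\beta}$ yield a sum of $N$ items whose leading contribution must exactly cancel the $M$-direction contribution to $\widetilde{\Ric}_{00}$. It is precisely this balance that forces the specific choices $tg_{\alpha\beta}$, sectional curvature $-1/(2N)$, and lapse $\R - N/(2t)$, and delivers Ricci-flatness up to $O(1/N)$.

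Finally, I would identify Hamilton's Harnack expression by evaluating $\widetilde{\Rm}$ on the bivector $W = U^{ij}\,\partial_i\wedge\partial_j + X^k\,\partial_0\wedge\partial_k$. The $UU$ piece gives $R_{ijkl}U^{ij}U^{kl}$. The mixed term $\widetilde{R}_{0kij}X^k U^{ij}$ reduces, after substituting $\widetilde{\Gamma}^0_{ij}\sim \tfrac{4t}{N}R_{ij}$ and pairing with $\widetilde{g}_{00}\sim -N/(2t)$, to $\nabla_i R_{jk} - \nabla_j R_{ik} = P_{ijk}$, reproducing the $2P_{ijk}U^{ij}X^k$ term. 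The $XX$ piece $\widetilde{R}_{0i0j}X^iX^j$ is the delicate one: second time derivatives of $g_{ij}$ produce $\partial_t R_{ij} = \Delta R_{ij} + 2R_{ikjl}R^{kl} - 2R_{ik}R_j^{\,k}$ via the standard evolution equation for Ricci under the flow; spatial Hessians of the lapse contribute $-\tfrac{1}{2}\nabla_i\nabla_j \R$; and the cross term between the $-N/(2t)$ part of the lapse and the warping $tg_{\alpha\beta}$, once the inverse lapse is expanded, gives precisely the $\tfrac{1}{2t}R_{ij}$ term of $M_{ij}$.

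The main obstacle I anticipate is the joint verification of the $\widetilde{R}_{0i0j}$ calculation and the Ricci-trace cancellation: two separate traces (over $M$ and over $\mathbb{H}^N$) must match to leading order in $1/N$, and every instance of $(\widetilde{g}_{00})^{-1}$ has to be expanded consistently so that the $\tfrac{1}{2t}R_{ij}$ term emerges with the correct sign and coefficient. Careful bookkeeping of the $O(1/N)$ error terms, together with the contracted Bianchi identity $\nabla^j R_{ij} = \tfrac{1}{2}\nabla_i\R$ entering through the lapse-gradient terms, should close the argument and simultaneously yield both assertions of the theorem.
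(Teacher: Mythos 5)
Your plan — direct componentwise computation of the Christoffel symbols, the Riemann tensor block by block, then tracing to check Ricci flatness, all while expanding $(\tilde{g}_{00})^{-1}$ in powers of $1/N$ — is exactly the paper's approach, so the strategy is sound. However, a few of your intermediate claims are factually wrong and would derail the computation if taken literally, even though you clearly have the right guiding principle (the $O(1)$ pieces from the $M$-trace and the $\mathbb{H}^N$-trace must cancel).

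First, the $M$--$\mathbb{H}^N$ mixed Riemann block does \emph{not} vanish: one finds $\tilde{R}_{i\alpha j\beta} = \tfrac{1}{2}\bigl(\R - \tfrac{N}{2t}\bigr)^{-1}g_{\alpha\beta}R_{ij}$, which is $O(1/N)$, and its $\tilde{g}^{\alpha\beta}$-trace gives approximately $-R_{ij}$, which is precisely what cancels the $+R_{ij}$ coming from $\tilde{g}^{kl}\tilde{R}_{ikjl}$. If this block were really zero, $\widetilde{\Ric}_{ij}$ would be $O(1)$, not $O(1/N)$, and the theorem would fail. Second, the purely hyperbolic block is not just the intrinsic sectional curvature $-\tfrac{1}{2N}$; the extrinsic contribution from the warping (through $\tilde{\Gamma}^0_{\alpha\beta}$ and $\tilde{\Gamma}^\alpha_{\beta 0}$) cancels it to leading order, so $\tilde{R}_{\alpha\beta\gamma\delta}$ is in fact $O(1/N^2)$ — this is essential so that after the $N$-fold trace it still contributes only $O(1/N)$ to $\widetilde{\Ric}_{\alpha\beta}$. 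Third, the $\tfrac{1}{2t}R_{ij}$ term of $M_{ij}$ does not come from a ``cross term with the warping $tg_{\alpha\beta}$'': it comes from the $t$-derivative of the $-N/(2t)$ part of the lapse acting inside $\partial_t\tilde{\Gamma}^0_{ij}$, via $-\tfrac{1}{2}\bigl(\R-\tfrac{N}{2t}\bigr)^{-1}\cdot\tfrac{N}{2t^2}R_{ij} \to \tfrac{1}{2t}R_{ij}$. (You also misquote $\tilde{\Gamma}^0_{ij}$: it is approximately $-\tfrac{2t}{N}R_{ij}$, not $\tfrac{4t}{N}R_{ij}$.) None of this changes your method, which coincides with the paper's, but a proof whose entire content is a set of delicate cancellations cannot afford to mislocate where the $O(1)$ pieces originate.
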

We will compute the curvature of the hyperbolic thermostat in section 3.2. 
Since the full curvature tensor of the hyperbolic thermostat gives rise to the exact Hamilton's Harnack expression, 
we expect to recover Hamilton's Harnack inequality by applying the preservation principle under the Ricci flow. 
It is well known that the positivity of some curvature is preserved under the Ricci flow. 
In \cite{TOP2}, Cabezas-Rivas and Topping recovered Hamilton's Harnack inequality by proving an appropriate preservation principle. 
In the present paper, we will recover Hamilton's Harnack inequality 
by proving the positivity of the curvature operator of the restricted hyperbolic thermostat: 
\begin{theorem}[Main Theorem]
Let $(M,g_{ij}(t))$ be a complete Ricci flow for $t \in (0, T] \subset \mathbb{R}^{+}$ with uniformly bounded curvature, and 
assume that the manifold $(M, g_{ij}(t))$ has a weakly positive curvature operator. 
Then, the manifold $(\bar{M}, \bar{g}_{ab}(t))$, which is obtained by restricting the hyperbolic thermostat to a submanifold, has weakly positive curvature operator. 
\end{theorem}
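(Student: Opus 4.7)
The plan is to realize Hamilton's Harnack quadratic form~(\ref{he:1}) exactly as the curvature operator of the restricted thermostat $\bar{g}$, and then to prove its positivity by a tensor maximum principle applied intrinsically in this geometric setting.

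\textbf{Choice of submanifold.} The $1/N$ errors in Theorem~\ref{th:1} live precisely in the curvature blocks that involve an $\mathbb{H}^N$ direction. I would therefore take $\bar{M}$ to be a totally geodesic slice obtained by fixing a base point in the $\mathbb{H}^N$ factor, so that $\bar{M}$ carries only the $M$ and time directions with $\bar{g}_{ij}=g_{ij}$, $\bar{g}_{00}=\R - N/(2t)$, $\bar{g}_{0i}=0$. A 2-form $\omega\in\Lambda^2T\bar{M}$ then decomposes uniquely as $\omega=U+X\wedge dt$ with $U$ a 2-form and $X$ a 1-form on $M$. Specialising the computation behind Theorem~\ref{th:1} to these slices (where the terms producing $1/N$-errors do not enter) should give the exact identity $\bar{R}(\omega,\omega)=Z(U,X)$. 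Thus proving weak positivity of the curvature operator of $\bar{g}$ amounts, algebraically, to proving Hamilton's Harnack inequality for $(M,g(t))$.

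\textbf{Preservation via a maximum principle.} The next step is to derive an evolution equation of the form
\[
\frac{\partial}{\partial t}\bar{R}=\Delta\bar{R}+Q(\bar{R})
\]
for the curvature operator of $\bar{g}$, regarded as a time-dependent tensor on $M$; here $\Delta$ is the Laplacian of $g(t)$ acting componentwise and $Q$ is a quadratic reaction term built from contractions of $\bar{R}$ with itself. Since $\bar{g}$ is built from $g_{ij}$, $R_{ij}$, $\R$ and $1/t$, and since $g(t)$ solves~(\ref{eq:100}), this equation is in principle computable, and the separate evolution equations for $R_{ijkl}$, $P_{ijk}$ and $M_{ij}$ derived by Hamilton should package together naturally into a single one. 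Hamilton's tensor maximum principle then reduces weak positivity of $\bar{R}$ on $(0,T]$ to two points: (a) preservation of the cone $\{\bar{R}\succeq 0\}\subset\mathrm{End}_{\mathrm{sym}}(\Lambda^2T\bar{M})$ under the reaction ODE $\dot S=Q(S)$; and (b) weak positivity of $\bar{R}$ as $t\to 0^{+}$. Point~(b) is handled by the singular term $R_{ij}/(2t)$ in $M_{ij}$, which dominates for small $t$ and forces positivity of the $X\wedge dt$ block, combined with the hypothesis that the curvature operator of $g(t)$ (the $U$ block) is weakly positive.

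\textbf{Main obstacle.} The essential difficulty is the cone-preservation check~(a). This is exactly the algebraic content of Hamilton's original proof and rests on the second Bianchi identity together with precise cancellations among terms such as $R_{ikjl}R^{kl}$ and $R_{ik}R_j^{\,k}$. The conceptual advantage of the present formulation is that one is working with a single tensor $\bar{R}$ on a concrete Riemannian geometry instead of the coupled system $(R_{ijkl},P_{ijk},M_{ij})$, so one can hope that the cone-preservation becomes more transparent from this unified viewpoint. Verifying this cleanly --- in particular, confirming that no unwanted cross-terms obstruct the preservation and that the $1/N$-corrections are indeed absent from the restricted geometry rather than merely small --- is where the geometric content of Theorem~\ref{th:1} must be made quantitative, and is the central technical task of the proof.
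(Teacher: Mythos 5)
Your overall framework matches the paper: restrict to the slice $\bar{M}=M\times\mathbb{R}^+$, identify Hamilton's Harnack form with $\overline{\Rm}(\bar{U},\bar{U})$, then apply a maximum-principle argument. But there are two genuine gaps, and the first is essential.

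\textbf{Missing: the extension of the test 2-form.} You write the evolution equation abstractly as $\partial_t \bar{R}=\Delta\bar{R}+Q(\bar{R})$ and invoke ``Hamilton's tensor maximum principle,'' deferring the cone-preservation check (your point~(a)) as ``the central technical task.'' However, the paper does not apply the abstract vector-bundle maximum principle to a single tensor $\bar{R}$ on $\bar{M}$, and in fact it cannot straightforwardly do so: $\bar{g}$ is Lorentzian (since $\bar{g}_{00}=\R-N/(2t)<0$ for large $N$), so the induced fibre metric on $\Lambda^2 T^*\bar{M}$ is not positive definite, which is a standing hypothesis for that machinery. Instead the paper works with the scalar quantity $\overline{\Rm}(\bar{U},\bar{U})$ for a test 2-form $\bar{U}$ extended off the putative first zero by the very specific conditions~(\ref{as:1})
\[
\bar{\nabla}_k\bar{U}^{ij}=\tfrac{1}{4t}(\bar{\delta}^i_k\bar{U}^{0j}-\bar{\delta}^j_k\bar{U}^{0i}),\quad
\bar{\nabla}_i\bar{U}^{0j}=0,\quad
(\bar{\nabla}_0-\bar{\Delta})\bar{U}^{ij}=0,\quad
(\bar{\nabla}_0-\bar{\Delta})\bar{U}^{0i}=\tfrac{1}{2t}\bar{U}^{0i}.
\]
These are precisely what make the first-order cross terms in $(\bar{\nabla}_0-\bar{\Delta})\overline{\Rm}(\bar{U},\bar{U})$ cancel against the $\frac{1}{t}$-terms, yielding the clean sum-of-squares identity of Lemma~\ref{l:3} and Remark~4.2. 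Remark~\ref{la:1} explains that this extension is exactly the ``Ricci-flat model'' version of Hamilton's gradient-expanding-soliton prescription. Without specifying this extension, your reaction term $Q(\bar{R})$ would contain unsigned gradient terms (such as $\bar{\nabla}^p\bar{R}_{ijkl}\,\bar{\nabla}_p\bar{U}^{ij}\bar{U}^{kl}$) and the positivity argument would stall --- which is, in effect, the gap you yourself flag at the end of your proposal. This is not a verification to be ``confirmed''; it is the main content of the proof.

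\textbf{Missing: the perturbation argument.} Your discussion of point~(b) (positivity as $t\to 0^+$) is correct but only covers the compact, strictly-positive-curvature case. The paper's rigorous proof introduces Shi-type exhaustion functions and the perturbed tensor $\widehat{\Rm}(\bar{U},\bar{U})=\overline{\Rm}(\bar{U},\bar{U})+\frac{1}{t}\phi\,g_{ij}\bar{U}^{0i}\bar{U}^{0j}+\frac{1}{2}\psi(g_{ik}g_{jl}-g_{il}g_{jk})\bar{U}^{ij}\bar{U}^{kl}$ to deal with non-compactness and with only \emph{weak} positivity of the curvature operator; these auxiliary functions are engineered so that $\widehat{\Rm}$ attains its infimum on a compact set and the inequality for $(\bar{\nabla}_0-\bar{\Delta})\widehat{\Rm}$ becomes strict. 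One then takes the perturbations to zero. Your proposal should include this step (or cite Shi/Hamilton explicitly for it) to be complete. Finally, a small point: the restriction to $\bar{M}$ does not eliminate the $O(1/N)$ errors; the computations in Section~3.3 are still asymptotic in $N$. What it does is remove the $\mathbb{H}^N$-indexed components of $\bar{U}$, which would otherwise make $\widetilde{\Rm}(\tilde U,\tilde U)$ diverge as $N\to\infty$.
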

Details for the manifold $(\bar{M}, \bar{g}_{ab}(t))$ can be found in section 3. 
In the course of proving Theorem 1.3, we will establish new geometric interpretations to several crucial quantities in Hamilton's original proof of the Harnack inequality. 
We prove Theorem 1.3 by adapting Hamilton's original argument to our setting of the hyperbolic thermostat. 
In particular, we imitate Hamilton's theorem [11, Theorem 4.1] to deduce the inequality in Lemma $\ref{l:3}$ (see Remark $\ref{la:1}$). \\ 
\\
Hyperbolic thermostat and the canonical expanding soliton induced Harnack inequality. 
On the other hand, the canonical shrinking soliton introduced by \cite{TOP3} and \cite{TOP4} recover some results on the Ricci flow 
which were discovered by Perelman \cite{P1} (e.g. the monotonicity of $\mathcal{W}$-entropy). 
In fact, the monotonicity of $\mathcal{W}$-entropy under the Ricci flow is induced from the view point of Riemannian geometric thermostat 
by formally applying a comparison theorem for the total scalar curvature (See section 5). 
Hence, we hope that all of results from the canonical solitons are interpreted as the results from thermostats. \\
\\
This paper is organized as follows. 
In section 2 we recall the basic set up for the Riemannian geometry and basic properties of the Ricci flow. 
In section 3 we see that the Harnack expression for the Ricci flow $g(t)$ appears as the full curvature tensor of the hyperbolic thermostat metric $\tilde{g}(t)$ and 
derive the differential equations of the components of curvature tensor. 
In section 4 we state and prove the Main Theorem which is equivalent to Hamilton's Harnack inequality, 
by applying the maximum principle along a submanifold $\bar{M}$ in $\tilde{M}$ equipped with a degenerate metric $\bar{g}$ as a section of $Sym^2 (T^{\ast} \bar{M})$. 
Here, $\tilde{M}$ is a potentially infinite dimensional manifold and $\bar{M}$ is a ($\dim M +1$)-dimensional manifold. 
In section 5 we prove the comparison theorem for the total scalar curvature on geodesic sphere which holds if the manifold is complete and Ricci flat. 
The monotonicity of $\mathcal{W}$-entropy is recovered from a view point of Riemannian's geometric thermostat by formally applying this comparison theorem. 

\section{Preparation}
In this section, we define fundamental quantities for a Riemannian manifold. 
Moreover, we deduce some basic properties of the Ricci flow. 
In this paper, we adopt the convention of curvatures in \cite{TOP} as stated below.

\subsection{Riemannian geometry}
Let $(M, g)$ be a Riemannian manifold, and $X, Y, Z, W$ vector fields independent of time $t$ on $M$. 
We define the full curvature tensor $\Rm$, Ricci curvature $\Ric$, and scalar curvature $\R$: 
\begin{align*}
&\Rm (X,Y)Z:=\nabla _Y \nabla _X Z - \nabla _X \nabla _Y Z+\nabla _{[X,Y]} Z \, ,\\
&\Rm (X,Y,Z,W):=\langle{\Rm (X,Y)Z},{W}\rangle \, , \\
&\Ric (X,Y):=\tr \Rm (X, \cdot , Y, \cdot )\, , \\
&\R:=\tr \Ric \, ,
\end{align*}
where $\langle{\cdot}, {\cdot}\rangle$ is the inner product with respect to $g$, 
$\nabla$ is the Levi-Civita connection with respect to $g$, and $[X, Y] :=\nabla _X Y-\nabla _Y X$ is the Lie bracket. 
These components are expressed as
\begin{align*}
&R_{ijk}^{\, \, \, \, \, \, \, \, l} \frac{\partial}{\partial x^l}=\Rm \Bigl(\frac{\partial}{\partial x^i},\frac{\partial}{\partial x^j}\Bigl)\frac{\partial}{\partial x^k} \, ,\\
&R_{ijkl}=g_{pl}R_{ijk}^{\, \, \, \, \, \, \, \, p}\, , \\
&R_{ij}=g^{kl}R_{ikjl}\ ,\\
&\R =g^{ij}R_{ij}\, ,
\end{align*}
where $(x^1, \cdots , x^n)$ are local coordinates on $M$ and $\{ \frac{\partial}{\partial x^i}\} ^{n}_{i=1}$ is a local frame of $TM$ consisting of coordinate vector fields. 
Note that we adopt the Einstein summation convention.\\
\\
The full curvature tensor has some symmetry properties:
\begin{equation*}
\begin{aligned}
&R_{ijkl}=-R_{jikl}=-R_{ijlk}=R_{klij}\, , \\
&R_{ijkl}+R_{jkil}+R_{kijl}=0\, ,\\
&\nabla _i R_{jklm} +\nabla _j R_{kilm}+\nabla _k R_{ijlm}=0 \, .
\end{aligned}
\end{equation*}
The second and third equations are well-known as the first and the second Bianchi identities, respectively. 
By taking the trace of the second Bianchi identity, we have 
\begin{equation}\label{eq:22}
\nabla ^l R_{lijk}=\nabla _j R_{ki}- \nabla _k R_{ji}
\end{equation}
where $\nabla ^{i}:= g^{ij} \nabla _{j}$ . We take the trace again to get the twice contracted second Bianchi identity:
\begin{equation}\label{eq:1}
\nabla ^{i}R_{ij}=\frac{1}{2}\nabla _{j} \R \, .
\end{equation}
We note that the first and second Bianchi identities are equivalent to the diffeomorphism invariance of the curvature tensor \cite{Kaz}. 
For arbitrary tensor $A$, we have the equation which deduce the commutation equation for Levi-Civita connection:
\begin{equation}\label{re:1}
\nabla ^2 _{X,Y} A-\nabla ^2 _{Y,X} A =-\Rm (X, Y)A \, ,
\end{equation}
where $\nabla ^2 _{X,Y}:=\nabla _X \nabla _Y -\nabla _{\nabla _X Y}$. 
This equation is called the Ricci identity. 
The following two examples for the tensors $R_{ij}$ and $\nabla _p R_{ij}$ are important to know how geometric quantities evolve under the Ricci flow. 
\begin{equation}\label{eq:51}
\begin{aligned}
\nabla _m \nabla _n R_{ij}- \nabla _n \nabla _m R_{ij}=&R_{mni}^{\,\,\,\,\,\,\,\,\,\,\,p}R_{pj}+R_{mnj}^{\,\,\,\,\,\,\,\,\,\,\,p}R_{ip} \, ,\\
\nabla _m \nabla _n \nabla _p R_{ij}-\nabla _n \nabla _m \nabla _p R_{ij}=&R_{mnp}^{\,\,\,\,\,\,\,\,\,\,\,\, q}\nabla _q R_{ij}+R_{mni}^{\,\,\,\,\,\,\,\,\,\,\,\,\, q}\nabla _p R_{qj}+R_{mnj}^{\,\,\,\,\,\,\,\,\,\,\,\, q}\nabla _p R_{iq} \, .
\end{aligned}
\end{equation}
We use these identities in the proof of the equations $(\ref{eq:21})$. 
By the symmetric properties of the full curvature tensor, we naturally define the curvature operator $\mathcal{R} \colon \wedge ^2 T^{\ast} M \longrightarrow \wedge ^2 T^{\ast} M$ as follows:
\begin{align*}
\Rm (X,Y,Z,W)=\langle{\mathcal{R}(X\wedge Y)}, {Z\wedge W}\rangle \, .
\end{align*}
We call the curvature operator is positive (resp. weakly positive) when $\Rm ({U}, {U}) >0$ (resp. $\geq 0$) for any two-forms ${U}$. 
Our sign convention is that $R_{ijij}>0$ on the round sphere. 

\subsection{The evolution equations for the curvature under the Ricci flow}
In this section, we will see how geometric quantities evolve
when the metric evolves under the Ricci flow. 
Details can be found, for instance, in [19, Chapter 2]. \\
\\
Let $(M,g(t))$ be a Ricci flow, i.e.
\begin{equation*}
\frac{\partial}{\partial t } g_{ij}(t)=-2 R_{ij} (t ) 
\end{equation*}
as described in $(\ref{eq:100})$. 
The curvature tensor evolves as follows: 
\begin{align}\label{eq:112}
\frac{\partial}{\partial t}\Rm (X,Y,W,Z)=(\Delta \Rm)(X,Y,W,Z)+Q(X,Y,W,Z)+F(X,Y,W,Z)
\end{align}
where
\begin{equation*}
\begin{aligned}
Q(X,Y,W,Z):=&2(B(X,Y,W,Z)-B(X,Y,Z,W) \\
&+B(X,W,Y,Z)-B(X,Z,Y,W)) \, , \\
F(X,Y,W,Z):=&-\Ric (\Rm (X,Y)W,Z)+\Ric (\Rm (X,Y)Z,W)\\
&-\Ric (\Rm (W,Z)X,Y)+\Ric (\Rm (W,Z)Y,X)\, , \\
B(X,Y,W,Z):=&\langle{\Rm (X,\cdot , Y , \cdot )},{\Rm (W,\cdot , Z, \cdot )}\rangle \, ,
\end{aligned}
\end{equation*}
and $\Delta$ is Laplace-Beltarami operator $\Delta := g^{ij} \nabla _{i} \nabla _{j}$\, . 
This equation is also expressed as 
\begin{equation}\label{e:7}
\begin{aligned}
\frac{\partial}{\partial t}R_{ijkl}=&\Delta R_{ijkl}+2R_{imjn}R^{\,\,\, m\,\, n}_{k \,\,\,\, l}-2R_{imjn}R^{\,\,\, m\,\, n}_{l \,\,\,\, k}\\
&+2R_{imkn}R^{\,\,\, m\,\, n}_{j \,\,\,\, l}-2R_{imln}R^{\,\,\, m\,\, n}_{j \,\,\,\, k}\\
&-R^{m}_{l} R_{ijkm}+R^{m}_{k} R_{ijlm}-R^{m} _{j}R_{klim}+R^m _i R_{kljm} \, ,
\end{aligned}
\end{equation}
where $R_i{}^m{}_j{}^n:=g^{jm}g^{ln}R_{ijkl}$ and $R^{m}_{i}:=g^{jm}R_{ij}$. 
By taking the trace, we have the evolution equation for the Ricci tensor: 
\begin{equation}\label{e:5}
\frac{\partial}{\partial t}R_{ij}=\Delta R_{ij} -2 R^{k}_{i}R_{kj}+2R_{ikjl}R^{kl} \, .
\end{equation}
Moreover, by taking the trace again, we have the evolution equation for the scalar curvature:
\begin{equation*}
\frac{\partial R}{\partial t}=\Delta \R +2R^{ij} R_{ij} \, .
\end{equation*}
We set 
\begin{equation}\label{e:6}
\begin{aligned}
P_{ijk}&:=\nabla _{i}R_{jk}-\nabla _{j}R_{ik}\, , \\
M_{ij}&:=\Delta R_{ij} +2R_{ikjl}R^{kl}-R^{k}_{i}R_{kj}-\frac{1}{2}\nabla _{i}\nabla _{j}\R+\frac{1}{2t}R_{ij}\, ,
\end{aligned}
\end{equation}
as described in $(\ref{s:1})$
The tensors $P_{ijk}$ , $M_{ij}$ , and $R_{ijkl}$ are essentially parts of the curvature tensor $\tilde{R}_{abcd}$ on the hyperbolic thermostat $(\tilde{M}, \tilde{g}_{ab} )$ 
which we will define in section 3. These tensors are first introduced by Hamilton as Hamilton's Harnack expression. 
We now compute the deformation of these tensors when the metric $g(t)$ evolves under the Ricci flow. 
The following lemma will be used to compute the evolution equation for $\tilde{R}_{abcd}$ . 
\begin{lemma}[Hamilton \cite{H1}]\label{lem:1}
Let $(M, g(t))$ be a Ricci flow, and $P_{ijk}$ , $M_{ij}$ tensors defined as above. 
Then, we have 
\begin{equation}\label{eq:21}
\begin{aligned}
\Bigl( \frac{\partial}{\partial t} - \Delta \Bigl) P_{ijk}=&2R_{imjn}P^{mn} _{\,\,\,\,\,\,\,\,\,k}+2R_{imkn}P^{m \,\, n}_{\,\,\,\,\,\, j} +2R_{jmkn}P_{i} ^{\,\, mn}-2R^{m}_n \nabla _{m}R_{ijk}^{\,\,\,\,\,\,\,\,n}\\
&+ R^m_i P_{mjk} +R^m_j P_{imk} +R^m_k P_{ijm} \, , \\
\Bigl( \frac{\partial}{\partial t}-\Delta \Bigl) M_{ij}=&2R_{imjn}M^{mn}+2R^{mn}[\nabla _m P_{nij}+\nabla _{m}P_{nji}]\\
&+2P_{imn}P_{j}^{\,\,mn}-4P_{imn}P_{j}^{\,\,nm}+2R^{mn}R_{m}^{l}R_{injl}-\frac{1}{2t^2}R_{ij}\\
&+R^m_i M_{mj} +R^m_j M_{im} 
\end{aligned}
\end{equation}
\end{lemma}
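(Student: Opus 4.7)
The plan is to derive each evolution equation by computing the time derivative and the Laplacian of $P_{ijk}$ and $M_{ij}$ separately, and then match terms using the Bianchi identities and the Ricci commutation formulas. The key auxiliary ingredients will be the variation formula for the Christoffel symbols under the Ricci flow,
\begin{equation*}
\frac{\partial}{\partial t}\Gamma^k_{ij}=-g^{kl}\bigl(\nabla_i R_{jl}+\nabla_j R_{il}-\nabla_l R_{ij}\bigr),
\end{equation*}
together with the already-stated evolution equations \eqref{e:7} for $R_{ijkl}$ and \eqref{e:5} for $R_{ij}$, the Ricci commutation identities \eqref{eq:51}, and the contracted Bianchi identities \eqref{eq:22}, \eqref{eq:1}.

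For the first equation, I would first compute $\frac{\partial}{\partial t}\nabla_i R_{jk}$ by commuting $\partial_t$ with $\nabla_i$: this yields $\nabla_i\bigl(\frac{\partial}{\partial t}R_{jk}\bigr)$ plus correction terms coming from $\partial_t\Gamma$ which take the form $-(\partial_t\Gamma^l_{ij})R_{lk}-(\partial_t\Gamma^l_{ik})R_{jl}$. Substituting \eqref{e:5} and the $\Gamma$ variation produces a $\nabla_i\Delta R_{jk}$ term plus schematic $R*\nabla R$ and $\nabla R*R$ terms. Next I would anti-symmetrize in $(i,j)$ to obtain $\frac{\partial}{\partial t}P_{ijk}$. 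In parallel, I would compute $\Delta P_{ijk}=\nabla_i\Delta R_{jk}-\nabla_j\Delta R_{ik}$ modulo commutator terms, using the second line of \eqref{eq:51} to exchange $\nabla^2\nabla R$ and produce $\Rm*\nabla R$ terms; this is where the $R_{imjn}P^{mn}{}_k$ and $R^m{}_n\nabla_m R_{ijk}{}^n$ structures first appear. Subtracting, the $\nabla_i\Delta R_{jk}-\nabla_j\Delta R_{ik}$ pieces cancel and one is left with precisely the claimed combination, after using the second Bianchi identity \eqref{eq:22} to rewrite $\nabla R$ factors as $P$-tensors.

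The computation for $M_{ij}$ follows the same pattern but is considerably heavier, since $M_{ij}$ contains five distinct terms. I would treat them one at a time: (i) For $\Delta R_{ij}$, both $\partial_t$ and $\Delta$ are second-order in derivatives, so $(\partial_t-\Delta)\Delta R_{ij}$ decomposes into $\Delta(\partial_t-\Delta)R_{ij}$ plus commutator terms $[\partial_t,\Delta]R_{ij}$ and $[\Delta,\nabla^2]R_{ij}$; the first reduces via \eqref{e:5} to Ricci-curvature cubic terms, while the second is computed with \eqref{eq:51} applied to $R_{ij}$ and $\nabla_p R_{ij}$. (ii) For $-\frac{1}{2}\nabla_i\nabla_j R$, I would differentiate in $t$, using the scalar evolution $\partial_t R=\Delta R+2|\Ric|^2$ together with the $\partial_t\Gamma$ corrections, and cancel the $\Delta\nabla_i\nabla_j R$ piece against the Laplacian side up to commutators; the contracted Bianchi identity \eqref{eq:1} is what converts $\nabla R$ into $\Ric$-derivatives, and hence into $P$-tensors. (iii) The algebraic terms $R_{ikjl}R^{kl}$ and $R^k_iR_{kj}$ are handled by the product rule together with \eqref{e:7} and \eqref{e:5}; these generate both $R^{mn}R_m^{\ l}R_{injl}$-type cubic curvature terms and the $P\cdot P$ bilinears that survive in the final answer. (iv) The explicit $\frac{1}{2t}R_{ij}$ contributes $-\frac{1}{2t^2}R_{ij}$ from $\partial_t$, plus $\frac{1}{2t}(\partial_t-\Delta)R_{ij}$, the latter of which (by \eqref{e:5}) provides exactly the $\frac{1}{t}$ pieces needed to combine the remaining lower-order curvature couplings into the $M_{ij}$-weighted terms $R^m_iM_{mj}+R^m_jM_{im}$ and the $R_{imjn}M^{mn}$ term.

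The principal obstacle is bookkeeping rather than conceptual: many cubic curvature terms arise from independent sources, and one must recognize that they reassemble precisely into (a) the $\Rm$-weighted contractions with $P$ or $M$ that define the parabolic ``reaction'' structure, and (b) the quadratic $P\cdot P$ terms with the correct signs ($+2P_{imn}P_j^{\ mn}-4P_{imn}P_j^{\ nm}$). The asymmetry between these last two coefficients, as well as the single surviving term $2R^{mn}R_m^{\ l}R_{injl}$, are what force the careful use of the symmetries $R_{ijkl}=R_{klij}$ and the first Bianchi identity when reducing curvature contractions; I expect tracking these signs across the $\Delta R_{ij}$-step of (i) and the $\Ric\cdot\Rm$-step of (iii) to be where the calculation is most delicate, exactly as in Hamilton's original treatment.
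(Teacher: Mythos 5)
Your outline for the first evolution equation matches the paper's approach in substance: both use the commutation identities \eqref{eq:51} together with the variation of the connection to reduce $\bigl(\partial_t-\Delta\bigr)P_{ijk}$ to curvature$\cdot P$ terms, though the paper additionally flags the cyclic identity $P_{ijk}+P_{jki}+P_{kij}=0$ as the device that regroups the quadratic terms into the three $R\ast P$ contractions with the stated coefficients — a step worth keeping in mind when you get to the ``delicate sign tracking'' you anticipate.

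For the $M_{ij}$ equation, however, you take a genuinely different and noticeably heavier route than the paper. You propose to attack the five-term definition
\begin{equation*}
M_{ij}=\Delta R_{ij}+2R_{ikjl}R^{kl}-R^k_iR_{kj}-\tfrac12\nabla_i\nabla_j\R+\tfrac{1}{2t}R_{ij}
\end{equation*}
one term at a time, which means evolving the fourth-order quantity $\Delta R_{ij}$ and the Hessian $\nabla_i\nabla_j\R$ essentially from scratch. The paper instead recommends first establishing the algebraic identity
\begin{equation*}
M_{ij}=\nabla^p P_{pij}+R_{ikjl}R^{kl}+\tfrac{1}{2t}R_{ij}\, ,
\end{equation*}
which follows from the commutation of $\nabla^p\nabla_i$ against $\nabla_i\nabla^p$ applied to $R_{pj}$ together with the twice-contracted Bianchi identity \eqref{eq:1}. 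Once $M_{ij}$ is rewritten this way, the bulk of the $M_{ij}$ evolution is inherited from the already-proved evolution of $P_{ijk}$ by contracting and commuting one derivative through, rather than re-deriving it from the Ricci-tensor equation \eqref{e:5}. This is precisely the step that lets the $R^m_i M_{mj}+R^m_j M_{im}$ and $2R_{imjn}M^{mn}$ terms assemble themselves: $M_{ij}$ is already present in the formula, so the reaction-type coefficients of $M$ come for free rather than having to be reconstituted by hand from its five constituent pieces. Your plan would eventually yield the same answer, but you lose this leverage; and the specific coefficients you single out as tricky ($+2P_{imn}P_j^{\,\,mn}-4P_{imn}P_j^{\,\,nm}$ and $2R^{mn}R_m^{\,\,l}R_{injl}$) fall out much more transparently from the $\nabla^p P_{pij}$ route, because they arise from a single contraction of the $P$-evolution plus the $R_{ikjl}R^{kl}$ product rule rather than from re-summing contributions across four independent sources.

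A minor omission: you never explicitly write down the cyclic identity $P_{ijk}+P_{jki}+P_{kij}=0$, which the paper treats as one of the two essential tools (alongside \eqref{eq:51}) for the $P$-equation. You invoke the second Bianchi identity, which is related but not the same thing; the $P$-cyclic identity is the step that symmetrizes the curvature--$P$ couplings into the final form and it deserves to be stated as such in a complete write-up.
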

The first equation is shown by using the formula $(\ref{eq:51})$ and 
$$P_{ijk}+P_{jki}+P_{kij}=0 \, .$$ 
To get the second equation, we use the formula 
$$M_{ij}=\nabla ^p P_{pij}+R_{ikjl}R^{kl}+\frac{1}{2t}R_{ij} \, .$$ 
\begin{remark}\label{rem:1}
{\rm In \cite{H1}, Hamilton shows this lemma by using the vector field $D_t$ on the orthonormal frame bundle of the Ricci flow where $D_t$ is defined by $D_t:=\frac{\partial }{\partial t}+R_{b} ^{a}\nabla ^b _a$. 
Here, $\nabla ^b _a$ is obtained by modifying $\frac{\partial}{\partial t}$ by vertical vector field so that $D_t$ is tangent to the orthonormal frame bundle of the Ricci flow. 
The advantage of using $D_t$ instead of $\frac{\partial}{\partial t}$ is that the frame $\{e_a\}$ which is a local orthonormal frame at $t=0$ behaves like a local orthonormal frame for all time $t$ under the Ricci flow. 
However, we don't use $D_t$ in this paper because we will deal with the parameter $t$ not only as a time but also as a part of local coordinate system 
$t:= x^0$ on $(\tilde{M}, \tilde{g}_{ab})$ .}
\end{remark}

\section{Riemannian geometric thermostat}

In this section, we canonically construct hyperbolic thermostat from arbitrary Ricci flow following Perelman's construction of Riemannian geometric thermostat. 
Moreover, we derive the evolution equation of the curvature tensor. 

\subsection{Spherical thermostat}
Here, we describe Riemannian geometric thermostat which was introduced by Perelman in \cite{P1}. 
We only recall the full curvature tensor of the thermostat in this section. 
We will see the other properties in section 5. 

\begin{theorem}[Perelman \cite{P1}]
Let $(M,g_{ij}(\tau ))$ be a complete backward Ricci flow (i.e.$\frac{\partial g_{ij}}{\partial \tau }=2 R_{ij} (\tau )$ ) for $\tau \in (0, T] \subset \mathbb{R}^{+}$ with uniformly bounded curvature, 
and $(\mathbb{S}^N,g_{\alpha \beta})$ a sphere with constant sectional curvature $\frac{1}{2N}$ for $N \in \mathbb{N}$. 
We define  $\hat{M}=M \times \mathbb{S}^N \times (0,T ]$, and a metric $\hat{g}$ on $\hat{M}$ as follows:
\begin{align*}
\hat{g}_{ij}=g_{ij}\, , \quad
\hat{g}_{\alpha \beta}= \tau g_{\alpha \beta } \, , \quad
\hat{g}_{00}=\R+\frac{N}{2\tau } \, , \,  \hat{g}_{i \alpha} =\hat{g}_{0 i} = \hat{g}_{0 \alpha}=0 \, ,
\end{align*}
where $i, j$ are coordinate indices on the $M$ factor, $\alpha , \beta$ are coordinate indices on the $\mathbb{S}^{N}$ factor, 
0 represent the index of the scale coordinate $\tau$, $\R$ is the scalar curvature with respect to the metric $g_{ij}$.
Then, the Ricci tensor $\widehat{\Ric}$ is equal to zero up to errors of order $\frac{1}{N}$, i.e., $(\hat{M}, \hat{g})$ is Ricci flat up to errors of order $\frac{1}{N}$.
\end{theorem}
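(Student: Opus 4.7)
The plan is to compute the Ricci tensor $\widehat{\Ric}$ of $\hat{g}$ directly, block by block, and verify that each component vanishes up to an error of order $1/N$. The governing observation for the book-keeping is that $\hat{g}^{00} = 1/(\R + N/(2\tau)) = 2\tau/N + O(1/N^{2})$, so any Christoffel symbol whose formation requires raising a $0$-index is automatically small, with the single exception that $\hat{\Gamma}^{0}_{00}$ receives the large contribution $\frac{1}{2}\hat{g}^{00}\cdot(-N/(2\tau^{2}))$ from $\partial_{\tau}(N/(2\tau))$ and thereby lands at the $O(1)$ value $-1/(2\tau)$. The remaining $O(1)$ Christoffels are the intrinsic ones of $g_{ij}$ and $g_{\alpha\beta}$, the mixing pieces $\hat{\Gamma}^{i}_{0j} = R^{i}_{\,\,j}$ produced by the backward Ricci flow identity $\partial_{\tau}g_{ij} = 2 R_{ij}$, the warping piece $\hat{\Gamma}^{\alpha}_{0\beta} = \frac{1}{2\tau}\delta^{\alpha}_{\beta}$ coming from $\partial_{\tau}(\tau g_{\alpha\beta}) = g_{\alpha\beta}$, and the gradient piece $\hat{\Gamma}^{i}_{00} = -\frac{1}{2}\nabla^{i}\R$.

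I would then compute each Ricci block in turn. For $\widehat{R}_{\alpha\beta}$ the intrinsic Ricci of the sphere slice at fixed $(x,\tau)$ is $\frac{N-1}{2N}g_{\alpha\beta}$, whose leading $\frac{1}{2}g_{\alpha\beta}$ must cancel against the warping contributions generated by $\hat{\Gamma}^{\alpha}_{0\beta}$ and $\hat{\Gamma}^{0}_{\alpha\beta}$. For $\widehat{R}_{ij}$ the intrinsic Ricci $R_{ij}$ of $(M,g)$ must cancel against the time-derivative terms coming through $\partial_{\tau}\hat{\Gamma}$, which by the backward Ricci flow equation reintroduce precisely $\Ric$-tensors of the right size, together with the trace $\hat{\Gamma}^{\alpha}_{0\alpha} = N/(2\tau)$ that combines with the small $\hat{\Gamma}^{0}_{ij}$. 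For $\widehat{R}_{00}$ the cancellation uses the scalar-curvature evolution equation $\partial_{\tau}\R = -\Delta \R - 2|\Ric|^{2}$ (the backward version of the formula recorded in Section 2.2) to absorb the $O(N)$ contributions arising from $\hat{\Gamma}^{\alpha}_{0\beta}\hat{\Gamma}^{\beta}_{0\alpha}$ and from the second $\tau$-derivative of $N/(2\tau)$. The cross block $\widehat{R}_{i0}$ reduces, after the bulk cancellations, to the contracted second Bianchi identity $(\ref{eq:1})$, while $\widehat{R}_{i\alpha}$ and $\widehat{R}_{0\alpha}$ vanish identically because no Christoffel mixes the $M$ and $\mathbb{S}^{N}$ factors.

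The main obstacle is precisely this step of cancellation: each block generates many terms of different orders in $1/N$, and one must verify that the $O(1)$ and $O(N)$ pieces cancel cleanly and that only $O(1/N)$ residue survives. The evolution of $\R$ and the twice-contracted second Bianchi identity must enter at exactly the right places, and a subtlety arises in $\widehat{R}_{00}$ where the large piece $\partial_{\tau}^{2}(N/(2\tau)) = N/\tau^{3}$ must be balanced against $\hat{g}^{\alpha\beta}$-traces of order $N$. Provided this accounting is carried out carefully, the theorem follows from a direct but lengthy computation that closely parallels the hyperbolic case of Theorem \ref{th:1}; the only substantive differences come from the opposite sign of the sectional curvature on $\mathbb{S}^{N}$ versus $\mathbb{H}^{N}$ and from the opposite sign of the evolution $\partial_{\tau} g = +2\Ric$ versus $\partial_{t} g = -2\Ric$.
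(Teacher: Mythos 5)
Your proposal is correct and takes essentially the same route as the paper: the paper's own ``proof'' of this statement is a one-line remark deferring to the Christoffel-by-Christoffel, then Ricci-block-by-Ricci-block computation carried out in full for the hyperbolic case in Theorem 3.2, and your proposal reproduces exactly that strategy with the expected sign changes from $\partial_\tau g=+2\Ric$ and from the positive sectional curvature of $\mathbb{S}^N$. The $O(1)$ Christoffel symbols you list and the roles you assign to the backward scalar-curvature evolution equation and the contracted second Bianchi identity in each block all match the paper's hyperbolic computation mutatis mutandis.
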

Ricci flatness can be shown by the fundamental computation in the same way as the proof of Theorem 3.2. 
We see that the Hamilton's Harnack expression appears as the components of full curvature tensor $\hat{R}_{ijkl}$ , $\hat{R}_{ij0k}$ , and $\hat{R}_{0i0j}$ by setting $\tau = -t$ up to errors of order $\frac{1}{N}$. 
Hence, one may expect to prove Hamilton's Harnack inequality from the view point of the thermostat by using the basic property of the Ricci flow that the weak positivity of curvature operator is preserved 
(maximum principle). 
However, since $\tau >0$ and Hamilton's Harnack inequality holds with $t>0$, the Harnack expression which appears in the thermostat has opposite sign at terms 
where $\tau$ appears. 
This motivates the hyperbolic thermostat we will introduce in the next section. 
\\
\subsection{Hyperbolic thermostat}
We now describe the construction of the hyperbolic thermostat, which we already described in Theorem 1.2:
\begin{theorem}
Let $(M,g_{ij}(t))$ be a complete Ricci flow for $t \in (0, T] \subset \mathbb{R}^{+}$ with uniformly bounded curvature, 
and $(\mathbb{H}^N,g_{\alpha \beta})$ be a N-dimensional hyperbolic space with constant sectional curvature $-\frac{1}{2N}$ for $N \in \mathbb{N}$. 
We define  $\tilde{M}=M \times \mathbb{H}^N \times (0,T ]$, and a metric $\tilde{g}$ on $\tilde{M}$ as follows:
\begin{align*}
\tilde{g}_{ij}=g_{ij} \, , \quad
\tilde{g}_{\alpha \beta}= t g_{\alpha \beta } \, , \quad
\tilde{g}_{00}=\R-\frac{N}{2t } \, , \quad  \tilde{g}_{i \alpha} =\tilde{g}_{0 i} = \tilde{g}_{0 \alpha}=0 \, ,
\end{align*}
where $i, j$ are coordinate indices on the $M$ factor, $\alpha , \beta$ are coordinate indices on the $\mathbb{H}^N$ factor, 
0 represent the the index of the time coordinate $t$, $\R$ is the scalar curvature with respect to the metric $g_{ij}$.
Then, $\widetilde{\Ric}$ is equal to zero up to errors of order $\frac{1}{N}$, i.e., $(\tilde{M}, \tilde{g})$ is Ricci flat up to errors of order $\frac{1}{N}$. 
Moreover, Hamilton's Harnack expression appears as the full curvature operator with respect to the metric $\tilde{g}_{ab}$ up to errors of order $\frac{1}{N}$. 
\end{theorem}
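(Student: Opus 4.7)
The plan is to compute the Christoffel symbols of $\tilde g$ in the product block decomposition $(M,\mathbb{H}^N,t)$, substitute them into the Riemann tensor formula, and organise the output by the type of free indices, tracking the asymptotics through the Taylor expansion $\tilde g^{00}=1/(\R-N/(2t))=-\frac{2t}{N}(1+O(1/N))$, which serves as the book-keeping device that isolates the $O(1/N)$ corrections.

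First I would derive the non-trivial Christoffels from the three inputs $\partial_0\tilde g_{ij}=-2R_{ij}$ (Ricci flow), $\partial_0\tilde g_{\alpha\beta}=g_{\alpha\beta}$ (warping), and $\partial_i\tilde g_{00}=\partial_i\R$. The output is particularly clean because all cross-Christoffels between the $M$- and $\mathbb{H}^N$-factors vanish, while the coupling between $M$ and time is encoded in $\tilde\Gamma^0_{ij}=\tilde g^{00}R_{ij}$ and $\tilde\Gamma^k_{i0}=-R^k_i$, and the $\mathbb{H}^N$--time coupling in $\tilde\Gamma^\gamma_{\alpha 0}=\delta^\gamma_\alpha/(2t)$ and $\tilde\Gamma^0_{\alpha\beta}=-\frac{1}{2}\tilde g^{00}g_{\alpha\beta}$. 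Plugging into the Riemann tensor and reading off three blocks, the $ijkl$-block recovers $R_{ijkl}$ up to a cross-term $\propto\tilde g^{00}R_{ij}R_{kl}$ of order $1/N$; the mixed $ij0k$-block, generated by the antisymmetrised spatial derivative of $\tilde\Gamma^n_{i0}=-R^n_i$, collapses to $P_{ijk}=\nabla_iR_{jk}-\nabla_jR_{ik}$; and the time--time block $\tilde R_{0i0j}$, after rewriting $\partial_tR_{ij}$ via the evolution equation (\ref{e:5}) and commuting derivatives by (\ref{eq:51}), produces Hamilton's $M_{ij}$ exactly. The $\frac{1}{2t}R_{ij}$ summand of $M_{ij}$ arises from the product $\tilde\Gamma^0_{00}\tilde\Gamma^n_{i0}$ at leading order in $N$: the $-N/(2t)$ temperature of $\tilde g_{00}$ feeds into $\partial_0\tilde g_{00}$ as $N/(2t^2)$, which then combines with $\tilde g^{00}\sim-2t/N$ to reproduce Hamilton's $\frac{1}{2t}$ coefficient exactly.

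For Ricci flatness modulo $1/N$, I would split $\widetilde{\Ric}_{ij}=\tilde g^{ab}\tilde R_{iajb}$ into the three partial traces. A short calculation gives $\tilde g^{kl}\tilde R_{ikjl}=R_{ij}+O(1/N)$ (the intrinsic Ricci of $g$, plus corrections from $\tilde\Gamma^0_{ij}$-squared terms) and $\tilde g^{\alpha\beta}\tilde R_{i\alpha j\beta}=\frac{N}{2t}\tilde g^{00}R_{ij}=-R_{ij}+O(1/N)$, where the factor $g^{\alpha\beta}g_{\alpha\beta}=N$ from the hyperbolic trace compensates the $1/N$ in $\tilde g^{00}$ to produce a genuinely $O(1)$ cancellation. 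Together with $\tilde g^{00}\tilde R_{i0j0}=\tilde g^{00}M_{ij}=O(1/N)$, this yields $\widetilde{\Ric}_{ij}=O(1/N)$; parallel $N$-factor compensations, together with $\partial_t\R=\Delta\R+2R^{ij}R_{ij}$ and the twice-contracted Bianchi identity (\ref{eq:1}), handle $\widetilde{\Ric}_{\alpha\beta}$ and $\widetilde{\Ric}_{00}$. Finally, assembling the identified blocks $R_{ijkl}$, $P_{ijk}$, $M_{ij}$ into the quadratic form $\langle\widetilde{\mathcal R}(\tilde U),\tilde U\rangle$ on a bivector $\tilde U$ of type $(U^{ij},X^k=U^{k0})$ recovers Hamilton's Harnack expression $Z=R_{ijkl}U^{ij}U^{kl}+2P_{ijk}U^{ij}X^k+M_{ij}X^iX^j$ (up to the usual convention for normalising bivectors), proving the second assertion.

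The hard part will be the $0i0j$-block calculation: after substituting the evolution equation (\ref{e:5}) into $\partial_tR^n_i$, one must commute derivatives via (\ref{eq:51}) to recover the $\Delta R_{ij}$, $-\frac{1}{2}\nabla_i\nabla_j\R$ and the quadratic-in-curvature terms of $M_{ij}$ with exactly the right coefficients, and verify that the three different $(R^2)_{ij}$-style contributions from $\partial_tg^{np}$, the Lichnerowicz-type evolution term, and the $\tilde\Gamma^q_{i0}\tilde\Gamma^n_{0q}$ product telescope down to the single $-R^k_iR_{kj}$ appearing in $M_{ij}$. A parallel delicacy arises in $\widetilde{\Ric}_{00}$, where the leading-in-$N$ pieces from three distinct sources must conspire to cancel; this is the step at which the three design choices—the sectional curvature $-1/(2N)$ of $\mathbb{H}^N$, the warping $tg_{\alpha\beta}$, and the temperature $\R-N/(2t)$—are simultaneously tested, and where the specific coefficient $-N/(2t)$ is forced.
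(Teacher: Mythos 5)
Your plan is essentially the paper's own proof: compute the Christoffel symbols of $\tilde g$ block by block, substitute into the Riemann tensor formula, identify the $ijkl$, $ij0k$, $i0j0$ blocks with $R_{ijkl}$, $P_{ijk}$, $M_{ij}$ modulo $O(1/N)$ using $\tilde g^{00}\sim -2t/N$, and then trace, exploiting the $g^{\alpha\beta}g_{\alpha\beta}=N$ compensation to get Ricci flatness up to $O(1/N)$. One small correction of bookkeeping: the $\frac{1}{2t}R_{ij}$ term of $M_{ij}$ actually comes from the $N$-dependent parts of $\partial_0\tilde\Gamma^0_{ij}=\partial_0(\tilde g^{00}R_{ij})$ together with $\tilde\Gamma^0_{ij}\tilde\Gamma^0_{00}$ (both fed by $\partial_0\tilde g_{00}=\partial_t\R+N/2t^2$), not from a $\tilde\Gamma^0_{00}\tilde\Gamma^n_{i0}$ product, and no commutation of derivatives via $(\ref{eq:51})$ is needed in the $\tilde R_{i0j0}$ computation once the evolution equation $(\ref{e:5})$ is substituted; neither slip affects the argument.
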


Note that $\tilde{M}$ has potentially infinite dimension 
because later we take the limit $N \rightarrow \infty$, 
and $\tilde{g}_{0 0}$ is negative for sufficiently large $N>0$. 
Hence, $\tilde{g}$ is a Lorentzian metric on $\tilde{M}$ in this situation. 
When we see the tensor $\tilde{g}$ as a metric on $T^{\ast}\tilde{M}$ rather than $T \tilde{M}$, it degenerates in a limit $N \rightarrow \infty$. 
Then, this metric converges to a weakly positive definite tensor in this limit. \\
\\
We now prove Theorem 3.2 (which is equivalent to Theorem 1.2) by using basic computations in Riemannian geometry. 
Note that $\tilde{M}$ has potentially infinite dimension 
because later we take the limit $N \rightarrow \infty$, 
and $\tilde{g}_{0 0}$ is negative for sufficiently large $N>0$. 
Hence, $\tilde{g}$ is a Lorentzian metric on $\tilde{M}$ in this situation. 
When we see the tensor $\tilde{g}$ as a metric on $T^{\ast}\tilde{M}$ rather than $T \tilde{M}$, it degenerates in a limit $N \rightarrow \infty$. 
Then, this metric converges to a weakly positive definite tensor in this limit. 
\begin{proof}[proof of Theorem 3.2]
We use the fundamental formula for the Christoffel symbols, 
\begin{equation*}
\tilde{\Gamma}^a_{bc}=\frac{1}{2}\tilde{g}^{ad}\Bigl( \frac{\partial \tilde{g}_{cd}}{\partial x^b} +\frac{\partial \tilde{g}_{bd}}{\partial x^c} -\frac{\partial \tilde{g}_{bc}}{\partial x^d} \Bigl)  
\end{equation*}
to compute all kinds of $\tilde{\Gamma}^{a}_{bc}$ , where the indices $a, b, c$ represent either $i$ (the indices for $M$), $0$ (the index for $\mathbb{R}^+$) or $\alpha$ (the indices for $\mathbb{H}^{N}$): 
\begin{align*}
\tilde{\Gamma}^{0}_{0 0}&=\frac{1}{2}\tilde{g}^{00}\Bigl( \frac{\partial \tilde{g}_{00}}{\partial x^0}\Bigl) =\frac{1}{2}\Bigl( \R-\frac{N}{2t}\Bigl )^{-1}\Bigl( \frac{\partial \R}{\partial t} + \frac{N}{2t^2}\Bigl) \, , \\
\tilde{\Gamma}^{0}_{i 0}&=\frac{1}{2}\tilde{g}^{00}\Bigl( \frac{\partial \tilde{g}_{00}}{\partial x^i}+\frac{\partial \tilde{g}_{i0}}{\partial x^0}-\frac{\partial \tilde{g}_{i0}}{\partial x^0}\Bigl)
=\frac{1}{2}\Bigl( \R-\frac{N}{2t} \Bigl) ^{-1}\frac{\partial \R}{\partial x^{i}} \, , \\
\tilde{\Gamma}^{i}_{0 0}&=\frac{1}{2}\tilde{g}^{ij}\Bigl(\frac{\partial \tilde{g}_{0j}}{\partial x^0}+\frac{\partial \tilde{g}_{0j}}{\partial x^0}-\frac{\partial \tilde{g}_{00}}{\partial x^j}\Bigl)=-\frac{1}{2}g^{ij} \frac{\partial \R}{\partial x^{j}}\, , \\
\tilde{\Gamma}^{i}_{j 0}&=\frac{1}{2}\tilde{g}^{ik}\Bigl(\frac{\partial \tilde{g}_{0k}}{\partial x^j}+\frac{\partial \tilde{g}_{ik}}{\partial x^0}-\frac{\partial \tilde{g}_{j0}}{\partial x^k}\Bigl)=-R^{i} _{j} \, , \\
\tilde{\Gamma}^{0}_{i j}&=\frac{1}{2}\tilde{g}^{00}\Bigl(\frac{\partial \tilde{g}_{j0}}{\partial x^i}+\frac{\partial \tilde{g}_{i0}}{\partial x^j}-\frac{\partial \tilde{g}_{ij}}{\partial x^0}\Bigl)=\Bigl( \R-\frac{N}{2t}\Bigl) ^{-1}R_{ij} \, , \\
\tilde{\Gamma}^{i}_{j k}&=\frac{1}{2}\tilde{g}^{il}\Bigl(\frac{\partial \tilde{g}_{kl}}{\partial x^j}+\frac{\partial \tilde{g}_{il}}{\partial x^k}-\frac{\partial \tilde{g}_{jk}}{\partial x^l}\Bigl)=\Gamma ^{i} _{j k} \, , \\ 
\tilde{\Gamma}^{\alpha}_{\beta 0}&=\frac{1}{2}\tilde{g}^{\alpha \gamma}\Bigl(\frac{\partial \tilde{g}_{0 \gamma}}{\partial x^{\beta}}+\frac{\partial \tilde{g}_{\beta \gamma}}{\partial x^0}
-\frac{\partial \tilde{g}_{\beta 0}}{\partial x^\gamma}\Bigl)
=\frac{1}{2t} \delta ^{\alpha} _{\beta} \, , \\
\tilde{\Gamma}^{0}_{\alpha \beta}&=\frac{1}{2}\tilde{g}^{00}\Bigl(\frac{\partial \tilde{g}_{\beta 0}}{\partial x^{\alpha}}+\frac{\partial \tilde{g}_{\alpha 0}}{\partial x^{\beta}}-\frac{\partial \tilde{g}_{\alpha \beta}}{\partial x^0}\Bigl)
=-\frac{1}{2}\Bigl( \R -\frac{N}{2t} \Bigl) ^{-1} g_{\alpha \beta} \, , \\
\tilde{\Gamma}^{\alpha}_{\beta \gamma}&=\frac{1}{2}\tilde{g}^{\alpha \delta}\Bigl( \frac{\partial \tilde{g}_{\gamma \delta}}{\partial x^{\beta}}+\frac{\partial \tilde{g}_{\beta \delta}}{\partial x^{\gamma}}
-\frac{\partial \tilde{g}_{\beta \gamma}}{\partial x^{\delta}}\Bigl)=\Gamma ^{\alpha}_{\beta \gamma} \, ,\\
\end{align*}
at a point $(\tilde{x}^{a} ) := (x^{i}, x^{\alpha}, x^{0})\in \tilde{M}$, where $R^{i}_{j}:=g^{ik} R_{jk}$\, , $x^0:= t$\, , $\Gamma ^{i} _{j k} $ is Christoffel symbols of $g_{ij}$ at the point $x^i \in M$, and 
$\Gamma ^{\alpha} _{\beta \gamma} $ is Christoffel symbols of $g_{\alpha \beta}$ at the point $x^{\alpha} \in \mathbb{H}^{N}$. 
Since 
\begin{align*}
\tilde{\nabla}_{\frac{\partial}{\partial x^0}} \frac{\partial}{\partial x^j}=\tilde{\Gamma}^a_{0j}\frac{\partial}{\partial x^a}
=-R^i_j \frac{\partial}{\partial x^i}+\frac{1}{2}\Bigl (\R -\frac{N}{2t} \Bigl) ^{-1}\frac{\partial \R}{\partial x^{i}}\frac{\partial}{\partial x^0} \, , 
\end{align*}
the following proposition holds:
\begin{proposition}
The orthonormal frame $\{ \frac{\partial}{\partial x^j} \} _{j=1}^n \subset \{ \frac{\partial}{\partial x^a}\} _{a=1} ^{n+N+1}$ at $t=0$ remains orthonormal after the time passes up to errors of order $\frac{1}{N}$
\end{proposition}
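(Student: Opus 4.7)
My plan is to interpret the statement via parallel transport along $\partial_0 = \partial_t$ in $(\tilde M, \tilde g)$, which here plays the role of Hamilton's operator $D_t$ recalled in Remark \ref{rem:1}. Fix a point $(p, y) \in M \times \mathbb{H}^N$, choose coordinates on $M$ with $g_{ij}(p, 0) = \delta_{ij}$ at $p$, and let $e_j(t)$ denote the $\tilde\nabla$-parallel transport of $\partial_j|_{t=0}$ along the curve $s \mapsto (p, y, s)$. Writing
\[
e_j(t) = A_j^i(t)\,\partial_i + B_j^\alpha(t)\,\partial_\alpha + C_j(t)\,\partial_0, \qquad A_j^i(0)=\delta_j^i,\; B_j^\alpha(0)=C_j(0)=0,
\]
the proposition reduces to showing that the $M$-projection $\{A_j^i \partial_i\}$ is orthonormal for $g(t)$ up to $O(1/N)$ errors.

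The next step is to plug the Christoffel symbols tabulated above into $\tilde\nabla_{\partial_0} e_j = 0$ and split the resulting linear ODE by the three factors of $\tilde M$. Because of the block structure of $\tilde g$, one has $\tilde\Gamma^i_{0\alpha} = \tilde\Gamma^0_{0\alpha} = \tilde\Gamma^\alpha_{0j} = \tilde\Gamma^\alpha_{00} = 0$, which completely decouples the hyperbolic factor: $B_j^\alpha$ solves $\dot B_j^\alpha + \tfrac{1}{2t}B_j^\alpha = 0$ with zero initial data, so $B_j^\alpha \equiv 0$. The $\partial_0$-component reads $\dot C_j = O(1/N)\cdot(A_j + C_j)$, because $\tilde\Gamma^0_{00}$ and $\tilde\Gamma^0_{0j}$ both carry the factor $(\R - N/(2t))^{-1} = O(1/N)$; integrating yields $C_j(t) = O(1/N)$. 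The $\partial_i$-component reads
\[
\dot A_j^i = R^i_k A_j^k + \tfrac{1}{2}C_j\,g^{ik}\partial_k \R = R^i_k A_j^k + O(1/N),
\]
whose leading order is exactly Hamilton's evolution on the orthonormal frame bundle.

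I would then observe that $\dot A_j^i = R^i_k A_j^k$, combined with the Ricci flow $\partial_t g_{pq} = -2R_{pq}$, preserves $g_{pq}(t)A_j^p A_k^q \equiv \delta_{jk}$; this is precisely the identity underlying the construction of $D_t$ in Remark \ref{rem:1}. Combining this with the metric compatibility $\tilde\nabla\tilde g = 0$, which forces $\tilde g(e_j,e_k)(t) = \tilde g(\partial_j,\partial_k)(0) = \delta_{jk}$ exactly, one expands
\[
\delta_{jk} = g_{pq}(t)A_j^p A_k^q + \Bigl(\R - \tfrac{N}{2t}\Bigr)C_j C_k = g_{pq}(t)A_j^p A_k^q + O(1/N),
\]
since the last term is $O(N)\cdot O(1/N^2)$. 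Rearranging identifies $\{A_j^i\partial_i\}$ as an orthonormal frame for $g(t)$ up to $O(1/N)$ errors, which is the content of the proposition.

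The main obstacle is purely technical: one needs the $O(1/N)$ estimates to be uniform on a subinterval $[t_0,T]$ with $t_0 > 0$, which amounts to expanding $(\R - N/(2t))^{-1}$ as a geometric series in $2t\R/N$ and closing the linear ODE system above by Gr\"onwall, using the uniform bound $|\Rm|\leq C$. Nothing here is genuinely difficult. The conceptual content --- and the reason this proposition is flagged up before the curvature computation of $\tilde g$ --- is the matching, up to $O(1/N)$, between the $M$-part of $\tilde\nabla_{\partial_0}$ and Hamilton's orthonormal-frame connection $D_t$; it is this coincidence that will eventually allow Hamilton's Harnack expression to emerge as a piece of the Riemann tensor of $\tilde g$.
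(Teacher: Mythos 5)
Your proposal takes the same underlying observation as the paper but develops it into a complete, self-contained argument. The paper's ``proof'' is essentially the single display
\[
\tilde\nabla_{\partial_0}\partial_j \;=\; \tilde\Gamma^a_{0j}\,\partial_a \;=\; -R^i_j\,\partial_i + \tfrac12\Bigl(\R - \tfrac{N}{2t}\Bigr)^{-1}\partial_j\R\;\partial_0 ,
\]
read as saying that $\tilde\nabla_{\partial_0}$ acts on the $M$-coordinate frame like Hamilton's $D_t$ (the correction $-R^i_j$ compensates $\partial_t g_{jk}=-2R_{jk}$), while the spurious $\partial_0$-component is $O(1/N)$. You make this precise via parallel transport along $\partial_0$, decoupling of the ODE system, and metric compatibility $\tilde\nabla\tilde g=0$; that is a clean formalization of the same heuristic, not a different route.

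There is, however, one genuine (if repairable) error in your order estimates. You assert $\tilde\Gamma^0_{00}=O(1/N)$ on the grounds that it carries the factor $(\R-N/(2t))^{-1}$. That factor is indeed $O(1/N)$, but the Christoffel table gives $\tilde\Gamma^0_{00}=\tfrac12(\R-N/(2t))^{-1}\bigl(\partial_t\R+N/(2t^2)\bigr)$, and the second factor is $O(N)$; hence $\tilde\Gamma^0_{00}=-\tfrac{1}{2t}+O(1/N)$, which is $O(1)$, not $O(1/N)$. Accordingly the $\partial_0$-component of the parallel transport ODE reads $\dot C_j=\bigl(\tfrac{1}{2t}+O(1/N)\bigr)C_j+O(1/N)A_j$, not $\dot C_j=O(1/N)(A_j+C_j)$. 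Your conclusion $C_j=O(1/N)$ on $[t_0,T]$ is still correct by Gr\"onwall, because the $O(1)$ coefficient multiplies $C_j$ itself, which starts at zero and is driven only by the genuinely $O(1/N)$ forcing $\tilde\Gamma^0_{0i}A_j^i$; the downstream steps therefore survive unchanged, but the stated justification is false and should be corrected --- it is a misreading of the Christoffel table rather than a harmless shorthand.
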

From Proposition 3.3, we see that the covariant derivative $\tilde{\nabla}_{\frac{\partial}{\partial x^0}}$ plays a role like the vector field $D_t$ as mentioned in Remark 2.2. \\
\\
By the definition of $\tilde{g}_{ab}$, the other components of Christoffel symbol are clearly vanished:
\begin{align*}
\tilde{\Gamma}^{\alpha }_{0 0}&=\frac{1}{2}\tilde{g}^{\alpha \beta} \Bigl( \frac{\partial \tilde{g}_{0\beta}}{\partial x^0}+\frac{\partial \tilde{g}_{0\beta}}{\partial x^0}-\frac{\partial \tilde{g}_{00}}{\partial x^{\beta}}\Bigl) =0 \, , \\
\tilde{\Gamma}^{0}_{\alpha 0}&=\frac{1}{2}\tilde{g}^{00}\Bigl( \frac{\partial \tilde{g}_{00}}{\partial x^{\alpha}}+\frac{\partial \tilde{g}_{\alpha 0}}{\partial x^0}-\frac{\partial \tilde{g}_{\alpha 0}}{\partial x^0}\Bigl) =0\, , \\
\tilde{\Gamma}^{i}_{\alpha 0}&=\frac{1}{2}\tilde{g}^{ij} \Bigl( \frac{\partial \tilde{g}_{j0}}{\partial x^{\alpha}}+\frac{\partial \tilde{g}_{\alpha i}}{\partial x^0}-\frac{\partial \tilde{g}_{\alpha 0}}{\partial x^j}\Bigl) =0\, , \\
\tilde{\Gamma}^{\alpha}_{i 0}&=\frac{1}{2}\tilde{g}^{\alpha \beta}\Bigl( \frac{\partial \tilde{g}_{\beta 0}}{\partial x^i}+\frac{\partial \tilde{g}_{i\beta}}{\partial x^0}-\frac{\partial \tilde{g}_{i0}}{\partial x^{\beta}}\Bigl) =0\, , \\
\tilde{\Gamma}^{0}_{i \alpha}&=\frac{1}{2}\tilde{g}^{00}\Bigl( \frac{\partial \tilde{g}_{\alpha 0}}{\partial x^i}+\frac{\partial \tilde{g}_{i0}}{\partial x^{\alpha}}-\frac{\partial \tilde{g}_{i\alpha}}{\partial x^0}\Bigl) =0\, , \\
\tilde{\Gamma}^{i}_{\alpha \beta}&=\frac{1}{2}\tilde{g}^{ij}\Bigl( \frac{\partial \tilde{g}_{\beta j}}{\partial x^{\alpha}}+\frac{\partial \tilde{g}_{\alpha j}}{\partial x^{\beta}}-\frac{\partial \tilde{g}_{\alpha \beta}}{\partial x^j}\Bigl) =0\, , \\
\tilde{\Gamma}^{\alpha}_{i j}&=\frac{1}{2}\tilde{g}^{\alpha \beta}\Bigl( \frac{\partial \tilde{g}_{j \beta}}{\partial x^i}+\frac{\partial \tilde{g}_{i\beta}}{\partial x^j}-\frac{\partial \tilde{g}_{ij}}{\partial x^{\beta}}\Bigl) =0\, , \\
\tilde{\Gamma}^{i}_{j \alpha}&=\frac{1}{2}\tilde{g}^{ik}\Bigl( \frac{\partial \tilde{g}_{\alpha k}}{\partial x^j}+\frac{\partial \tilde{g}_{jk}}{\partial x^{\alpha}}-\frac{\partial \tilde{g}_{j\alpha}}{\partial x^k}\Bigl) =0\, , \\
\tilde{\Gamma}^{\alpha}_{\beta i}&=\frac{1}{2}\tilde{g}^{\alpha \gamma}
\Bigl( \frac{\partial \tilde{g}_{i\gamma}}{\partial x^{\beta}}+\frac{\partial \tilde{g}_{\beta \gamma}}{\partial x^i}-\frac{\partial \tilde{g}_{\beta i}}{\partial x^{\gamma}}\Bigl) =0 \, .
\end{align*}
We compute the Ricci tensor with respect to $\tilde{g}$ by taking the trace of the curvature tensor. 
First, by using the standard formula for the curvature tensor:
\begin{equation*}
\tilde{R}_{abcd}=\tilde{g}_{df}\Bigl( \frac{\partial \tilde{\Gamma}^f_{ac}}{\partial x^b}-\frac{\partial \tilde{\Gamma}^f_{bc}}{\partial x^a}+\tilde{\Gamma}^e_{ac}\tilde{\Gamma}^f_{be}-\tilde{\Gamma}^e_{bc}\tilde{\Gamma}^f_{ae}\Bigl) \, , 
\end{equation*}
we have 
\begin{equation}\label{eq:13}
\begin{aligned}
\tilde{R}_{ijkl}=&\tilde{g}_{lm}\Bigl( \frac{\partial \tilde{\Gamma} ^m _{ik}}{\partial x^j}-\frac{\partial \tilde{\Gamma} ^m _{jk}}{\partial x^i}+\tilde{\Gamma} ^e _{ik}\tilde{\Gamma} ^m _{je}-\tilde{\Gamma} ^e _{jk}\tilde{\Gamma} ^m _{ie}\Bigl) \\
=&R_{ijkl}+\tilde{g}_{lm}(\tilde{\Gamma} ^0 _{ik}\tilde{\Gamma} ^m _{j0}-\tilde{\Gamma} ^0 _{jk}\tilde{\Gamma} ^m _{i0})\\
=&R_{ijkl}+g_{lm}\Bigl\{ \Bigl( \R -\frac{N}{2t}\Bigl) ^{-1}R_{ik}(-R^m_{j})-\Bigl( \R -\frac{N}{2t}\Bigl) ^{-1}R_{jk}(-R^m_{i})\Bigl\} \\
=&R_{ijkl}-\Bigl( \R -\frac{N}{2t}\Bigl) ^{-1}(R_{ik}R_{jl}+R_{jk}R_{il}) \, ,
\end{aligned}
\end{equation}
\begin{equation}\label{eq:113}
\begin{aligned}
\tilde{R}_{ij0k}=&\tilde{g}_{km}
\Bigl( \frac{\partial \tilde{\Gamma} ^m _{i0}}{\partial x^j}-\frac{\partial \tilde{\Gamma} ^m _{j0}}{\partial x^i}+\tilde{\Gamma} ^e _{i0}\tilde{\Gamma} ^m _{je}-\tilde{\Gamma} ^e _{j0}\tilde{\Gamma} ^m _{ie}\Bigl) \\
=&g_{km}\Bigl\{ \frac{\partial}{\partial x^j}(-R_i ^m)-\frac{\partial}{\partial x^i}(-R_j ^m)+\Gamma ^m _{jn}(-R_i ^n)-\Gamma ^m _{in} (- R_j ^n)\Bigl\} \\
&+g_{km}\Bigl\{ \frac{1}{2}\Bigl( \R -\frac{N}{2t}\Bigl) ^{-1}\frac{\partial \R}{\partial x^i} (-R_j ^m)-\frac{1}{2}\Bigl( \R -\frac{N}{2t}\Bigl) ^{-1}\frac{\partial \R}{\partial x^j} (-R_i ^m)\Bigl\} \\
=&\nabla _i R_{jk}-\nabla _j R_{ik}+\frac{1}{2}\Bigl( \R -\frac{N}{2t}\Bigl) ^{-1} \Bigl( \frac{\partial \R}{\partial x^j}R_{ik}-\frac{\partial \R}{\partial x^i}R_{jk}\Bigl) \, ,
\end{aligned}
\end{equation}
\begin{equation}\label{eq:1113}
\begin{aligned}
\tilde{R}_{i0j0}=&\tilde{g}_{00}\Bigl( \frac{\partial \tilde{\Gamma} ^0 _{ij}}{\partial x^0}-\frac{\partial \tilde{\Gamma} ^0 _{0j}}{\partial x^i}+\tilde{\Gamma} ^e _{ij}\tilde{\Gamma} ^0 _{0e}-\tilde{\Gamma} ^e _{0j}\tilde{\Gamma} ^0 _{ie}\Bigl) \\
=&\tilde{g}_{00}\Bigl[ \frac{\partial}{\partial t}\Bigl\{ \Bigl( \R -\frac{N}{2t}\Bigl) ^{-1} R_{ij} \Bigr\} -\frac{\partial}{\partial x^i} \Bigl( \frac{1}{2}\Bigl( \R -\frac{N}{2t}\Bigl) ^{-1}\frac{\partial \R}{\partial x^j} \Bigl)\\
&+ \tilde{\Gamma} ^m _{ij} \frac{1}{2} \Bigl( \R -\frac{N}{2t}\Bigl) ^{-1} \frac{\partial \R}{\partial x^m} +\Bigl( \R -\frac{N}{2t}\Bigl) ^{-1}\frac{1}{2}\Bigl( \R -\frac{N}{2t}\Bigl) ^{-1}(\frac{\partial \R}{\partial t}+\frac{N}{2t^2}\Bigl) \\
&+R^m _j\Bigl( \R -\frac{N}{2t}^{-1}\Bigl) R_{im}-\frac{1}{2}\Bigl( \R -\frac{N}{2t}\Bigl) ^{-1} \frac{\partial \R}{\partial x^j}\frac{1}{2}\Bigl( \R -\frac{N}{2t}\Bigl) ^{-1}\frac{\partial \R}{\partial x^i } \Bigr]\\
=&-\Bigl( \R -\frac{N}{2t}\Bigl) ^{-1}\Bigl( \frac{\partial \R}{\partial t}+\frac{N}{2t^2}\Bigl) R_{ij}+\frac{\partial R_{ij}}{\partial t}+\frac{1}{2}\Bigl( \R -\frac{N}{2t}\Bigl) ^{-1}\frac{\partial \R}{\partial x^i}\frac{\partial \R}{\partial x^j }\\
&-\frac{1}{2}\frac{\partial ^2 \R}{\partial x^i \partial x^j}+\frac{1}{2} \Gamma ^m _{ij} \frac{\partial \R}{\partial x^m}+\frac{1}{2}\Bigl( \R -\frac{N}{2t}\Bigl) ^{-1}\Bigl( \frac{\partial \R}{\partial t}+\frac{N}{2t^2}\Bigl) R_{ij}\\
&+R^m _j R_{im}-\frac{1}{4}\Bigl( \R -\frac{N}{2t}\Bigl) ^{-1}\frac{\partial \R}{\partial x^i}\frac{\partial \R}{\partial x^j}\\
=&\frac{\partial R_{ij}}{\partial t}-\frac{1}{2}\Bigl( \frac{\partial ^2 \R}{\partial x^i \partial x^j}-\Gamma ^m _{ij} \frac{\partial \R}{\partial x^m}\Bigl) +R^m_j R_{im}\\
&-\frac{1}{2}\Bigl( \R -\frac{N}{2t}\Bigl) ^{-1}\Bigl( \frac{\partial \R}{\partial t}+\frac{N}{2t^2}\Bigl) R_{ij} +\frac{1}{4}\Bigl( \R -\frac{N}{2t}\Bigl) \frac{\partial \R}{\partial x^i}\frac{\partial \R}{\partial x^j}\\
=&\Delta R_{ij} +2R_{ikjl}R^{kl}-\frac{1}{2}\nabla _i \nabla _j \R -R^m_j R_{im}\\
&-\frac{1}{2}\Bigl( \R -\frac{N}{2t}\Bigl) ^{-1}\Bigl( \frac{\partial \R}{\partial t}+\frac{N}{2t^2}\Bigl) R_{ij}+\frac{1}{4}(\R -\frac{N}{2t}\Bigl) ^{-1}\frac{\partial \R}{\partial x^i}\frac{\partial \R}{\partial x^j}\, , 
\end{aligned}
\end{equation}
where $\nabla$ is the Levi-Civita connection associated to the metric $g_{ij}$ . Here, we have used the evolution equation for the Ricci tensor ($\ref{e:5}$).   \\
\\
We see that the other components vanish up to errors of order $\frac{1}{N}$:
\begin{equation*}
\begin{aligned}
\tilde{R}_{\alpha \beta \gamma \delta}=&\tilde{g}_{\delta \varepsilon}\Bigl( \frac{\partial \tilde{\Gamma} ^{\varepsilon} _{\alpha \gamma}}{\partial x^{\beta}}-\frac{\partial \tilde{\Gamma} ^{\varepsilon} _{\beta \gamma}}{\partial x^{\alpha}}
+\tilde{\Gamma} ^e _{\alpha \gamma}\tilde{\Gamma} ^{\varepsilon} _{\beta e}-\tilde{\Gamma} ^e _{\beta \gamma}\tilde{\Gamma} ^{\varepsilon} _{\alpha e}\Bigl) \\
=&tR_{\alpha \beta \gamma \delta}+\frac{1}{4}\Bigl( \R -\frac{N}{2t}\Bigl) ^{-1}(g_{\beta \gamma}g_{\alpha \delta}-g_{\alpha \gamma}g_{\beta \delta})\\
=&-\frac{1}{4}\Bigl\{ \frac{2t}{N}+\Bigl( \R -\frac{N}{2t}\Bigl) ^{-1} \Bigl\} ( g_{\alpha \gamma}g_{\beta \delta}-g_{\beta \gamma}g_{\alpha \delta})\, , \\
\tilde{R}_{\alpha \beta \gamma i}=&\tilde{g}_{ij}\Bigl( \frac{\partial \tilde{\Gamma} ^{j} _{\alpha \gamma}}{\partial x^{\beta}}-\frac{\partial \tilde{\Gamma} ^{j} _{\beta \gamma}}{\partial x^{\alpha}}
+\tilde{\Gamma} ^e _{\alpha \gamma}\tilde{\Gamma} ^{j} _{\beta e}-\tilde{\Gamma} ^e _{\beta \gamma}\tilde{\Gamma} ^{j} _{\alpha e}\Bigl) =0\, , \\
\tilde{R}_{\alpha \beta \gamma 0}=&\tilde{g}_{00}\Bigl( \frac{\partial \tilde{\Gamma} ^{0} _{\alpha \gamma}}{\partial x^{\beta}}-\frac{\partial \tilde{\Gamma} ^{0} _{\beta \gamma}}{\partial x^{\alpha}}
+\tilde{\Gamma} ^e _{\alpha \gamma}\tilde{\Gamma} ^{0} _{\beta e}-\tilde{\Gamma} ^e _{\beta \gamma}\tilde{\Gamma} ^{0} _{\alpha e}\Bigl) \\
=&\tilde{g}_{00}\Bigl\{ -\frac{1}{2} \Bigl( \R -\frac{N}{2t}\Bigl) ^{-1}\frac{\partial g_{\alpha \gamma}}{\partial x^{\beta}}+\frac{1}{2}\Bigl( \R -\frac{N}{2t}\Bigl) ^{-1}\frac{\partial g_{\beta \gamma}}{\partial x^{\alpha}}\\
&-\frac{1}{2}\Gamma ^{\delta} _{\alpha \gamma}\Bigl( \R -\frac{N}{2t}\Bigl) ^{-1}g_{\beta \delta}+\frac{1}{2}\Gamma ^{\delta} _{\beta \gamma}\Bigl( \R -\frac{N}{2t}\Bigl) ^{-1}g_{\alpha \delta} \Bigl\} \\
=&-\frac{1}{2}\Bigl( \frac{\partial g_{\alpha \gamma}}{\partial x^{\beta}}-\frac{\partial g_{\beta \gamma}}{\partial x^{\alpha}}+\Gamma ^{\delta} _{\alpha \gamma}g_{\beta \delta}-\Gamma ^{\delta} _{\beta \gamma}g_{\alpha \delta}\Bigl) \\
=&-\frac{1}{2}\Bigl\{ \frac{\partial g_{\alpha \gamma}}{\partial x^{\beta}}-\frac{\partial g_{\beta \gamma}}{\partial x^{\alpha}}
+\frac{1}{2}g^{\delta \varepsilon}\Bigl( \frac{\partial g_{\gamma \varepsilon}}{\partial x^{\alpha}} +\frac{\partial g_{\alpha \varepsilon}}{\partial x^{\gamma}} -\frac{\partial g_{\alpha \gamma}}{\partial x^{\varepsilon}}\Bigl) g_{\beta \delta}\\
&-\frac{1}{2}g^{\delta \varepsilon}\Bigl( \frac{\partial g_{\gamma \varepsilon}}{\partial x^{\beta}} +\frac{\partial g_{\beta \varepsilon}}{\partial x^{\gamma}} -\frac{\partial g_{\beta \gamma}}{\partial x^{\varepsilon}}\Bigl) \Bigl\}\\
=&0\, , \\
\tilde{R}_{\alpha \beta ij}=&\tilde{g}_{jk}\Bigl( \frac{\partial \tilde{\Gamma} ^{k} _{\alpha i}}{\partial x^{\beta}}-\frac{\partial \tilde{\Gamma} ^{k} _{\beta i}}{\partial x^{\alpha}}
+\tilde{\Gamma} ^e _{\alpha i}\tilde{\Gamma} ^{k} _{\beta e}-\tilde{\Gamma} ^e _{\beta i}\tilde{\Gamma} ^{k} _{\alpha e}\Bigl) =0\, , \\
\tilde{R}_{\alpha \beta 0 i}=&\tilde{g}_{ij}\Bigl( \frac{\partial \tilde{\Gamma} ^{j} _{\alpha 0}}{\partial x^{\beta}}-\frac{\partial \tilde{\Gamma} ^{j} _{\beta 0}}{\partial x^{\alpha}}
+\tilde{\Gamma} ^e _{\alpha 0}\tilde{\Gamma} ^{j} _{\beta e}-\tilde{\Gamma} ^e _{\beta 0}\tilde{\Gamma} ^{j} _{\alpha e}\Bigl) =0\, , \\
\tilde{R}_{\alpha i \beta j}=&\tilde{g}_{jk}\Bigl( \frac{\partial \tilde{\Gamma} ^{k} _{\alpha \beta}}{\partial x^i}-\frac{\partial \tilde{\Gamma} ^{k} _{i \beta}}{\partial x^{\alpha}}
+\tilde{\Gamma} ^e _{\alpha \beta}\tilde{\Gamma} ^{k} _{i e}-\tilde{\Gamma} ^e _{i \beta}\tilde{\Gamma} ^{k} _{\alpha e}\Bigl) \\
=&\frac{1}{2}\Bigl( \R -\frac{N}{2t}\Bigl) ^{-1}g_{\alpha \beta}R_{ij}\, , \\
\tilde{R}_{\alpha 0 \beta i}=&\tilde{g}_{ij}\Bigl( \frac{\partial \tilde{\Gamma} ^{j} _{\alpha \beta}}{\partial x^{0}}-\frac{\partial \tilde{\Gamma} ^{j} _{0 \beta}}{\partial x^{\alpha}}
+\tilde{\Gamma} ^e _{\alpha \beta}\tilde{\Gamma} ^{j} _{0 e}-\tilde{\Gamma} ^e _{0 \beta}\tilde{\Gamma} ^{j} _{\alpha e}\Bigl) \\
=&\frac{1}{4}\Bigl( \R -\frac{N}{2t}\Bigl) ^{-1}g_{\alpha \beta}\frac{\partial \R}{\partial x^{i}}\, , 
\end{aligned}
\end{equation*}
\begin{equation*}
\begin{aligned}
\tilde{R}_{\alpha 0 \beta 0}=&\tilde{g}_{00}\Bigl( \frac{\partial \tilde{\Gamma} ^{0} _{\alpha \beta}}{\partial x^{0}}-\frac{\partial \tilde{\Gamma} ^{0} _{0 \beta}}{\partial x^{\alpha}}
+\tilde{\Gamma} ^e _{\alpha \beta}\tilde{\Gamma} ^{0} _{0 e}-\tilde{\Gamma} ^e _{0 \beta}\tilde{\Gamma} ^{0} _{\alpha e}\Bigl) \\
=&\tilde{g}_{00}\Bigl\{ \frac{1}{2} \Bigl( \R -\frac{N}{2t}\Bigl) ^{-2}\Bigl( \frac{\partial \R}{\partial t}+\frac{N}{2t^2}\Bigl) g_{\alpha \beta} \\
&-\frac{1}{2}\Bigl( \R -\frac{N}{2t}\Bigl) ^{-1}g_{\alpha \beta}+\frac{1}{2}\Bigl( \R -\frac{N}{2t}\Bigl) ^{-1}\Bigl( \frac{\partial \R}{\partial t}+\frac{N}{2t^2}\Bigl) \\
&-\frac{1}{2t}\delta^{\gamma} _{\beta}\Bigl( -\frac{1}{2}\Bigl( \R -\frac{N}{2t}\Bigl) ^{-1}g_{\alpha \gamma}\Bigl) \Bigl\} \\
=&\frac{1}{4}\Bigl( \R -\frac{N}{2t}\Bigl) ^{-1}\Bigl( \frac{\partial \R}{\partial t}+\frac{\R}{t}\Bigl) g_{\alpha \beta}\, , \\
\tilde{R}_{\alpha i j k }=&\tilde{g}_{kl}\Bigl( \frac{\partial \tilde{\Gamma} ^{l} _{\alpha j}}{\partial x^{i}}-\frac{\partial \tilde{\Gamma} ^{l} _{ij}}{\partial x^{\alpha}}
+\tilde{\Gamma} ^e _{\alpha j}\tilde{\Gamma} ^{l} _{i e}-\tilde{\Gamma} ^e _{ij}\tilde{\Gamma} ^{l} _{\alpha e}\Bigl) =0\, , \\
\tilde{R}_{\alpha i j 0}=&\tilde{g}_{00}\Bigl( \frac{\partial \tilde{\Gamma} ^{0} _{\alpha j}}{\partial x^{i}}-\frac{\partial \tilde{\Gamma} ^{0} _{ij}}{\partial x^{\alpha}}
+\tilde{\Gamma} ^e _{\alpha j}\tilde{\Gamma} ^{0} _{i e}-\tilde{\Gamma} ^e _{ij}\tilde{\Gamma} ^{0} _{\alpha e}\Bigl) =0\, , \\
\tilde{R}_{\alpha 0 jk}=&\tilde{g}_{jk}\Bigl( \frac{\partial \tilde{\Gamma} ^{k} _{\alpha i}}{\partial x^{0}}-\frac{\partial \tilde{\Gamma} ^{k} _{0i}}{\partial x^{\alpha}}
+\tilde{\Gamma} ^e _{\alpha i}\tilde{\Gamma} ^{k} _{0 e}-\tilde{\Gamma} ^e _{0i}\tilde{\Gamma} ^{k} _{\alpha e}\Bigl) =0\, , \\
\tilde{R}_{\alpha 0 j 0}=&\tilde{g}_{00}\Bigl( \frac{\partial \tilde{\Gamma} ^{0} _{\alpha i}}{\partial x^{0}}-\frac{\partial \tilde{\Gamma} ^{0} _{0i}}{\partial x^{\alpha}}
+\tilde{\Gamma} ^e _{\alpha i}\tilde{\Gamma} ^{0} _{0i}-\tilde{\Gamma} ^e _{0i}\tilde{\Gamma} ^{0} _{\alpha e}\Bigl) =0 \, .
\end{aligned}
\end{equation*}
Since the Ricci tensors are defined by $\tilde{R}_{ab}:=\tilde{g}^{cd}\tilde{R}_{acbd}$\, , we have 
\begin{align*}
\tilde{R}_{00}=&\tilde{g}^{ij}\tilde{R}_{0i0j}+\tilde{g}^{\alpha \beta}\tilde{R}_{0\alpha 0 \beta}\\
=&g^{ij}\Bigl\{ \Delta R_{ij} +2R_{ikjl}R^{kl}-\frac{1}{2}\nabla _i \nabla _j \R -R^m_j R_{im}\\
&-\frac{1}{2}\Bigl( \R -\frac{N}{2t}\Bigl) ^{-1}\Bigl( \frac{\partial \R}{\partial t}+\frac{N}{2t^2}\Bigl) R_{ij}+\frac{1}{4}\Bigl( \R -\frac{N}{2t}\Bigl) ^{-1}\frac{\partial \R}{\partial x^i}\frac{\partial \R}{\partial x^j}\Bigl\} \\
&+\frac{1}{t}g^{\alpha \beta}\frac{1}{4t}g_{\alpha \beta}+\frac{1}{4}\Bigl( \R -\frac{N}{2t}\Bigl) ^{-1}\Bigl( \frac{\partial \R}{\partial t}+\frac{N}{2t^2}\Bigl) g_{\alpha \beta}\\
=&\frac{1}{2}\Delta \R +|\Ric |^2 _{g} +\frac{1}{4}\Bigl( \R -\frac{N}{2t}\Bigl) ^{-1}|\nabla \R |_{g} +\frac{N}{4t^2}-\frac{1}{2}\frac{\partial \R}{\partial t}-\frac{N}{4t^2}\\
=&\frac{1}{4}\Bigl( \R -\frac{N}{2t}\Bigl) ^{-1} |\nabla \R |_g\, , 
\end{align*}
\begin{align*}
\tilde{R}_{0i}=&\tilde{g}^{jk}\tilde{R}_{0jik}+\tilde{g}^{\alpha \beta}\tilde{R}_{0\alpha i \beta}\\
=&\tilde{g}^{jk}\tilde{R}_{ik0j}+\tilde{g}^{\alpha \beta}\tilde{R}_{\alpha 0 \beta i}\\
=&g^{jk}\Bigl\{ \nabla _i R_{kj}-\nabla _k R_{ij}+\frac{1}{2}\Bigl( \R -\frac{N}{2t}\Bigl) ^{-1}\Bigl( \frac{\partial \R}{\partial x^k}R_{ij}-\frac{\partial \R}{\partial x^i}R_{kj}\Bigl) \Bigl\} \\
&+\frac{1}{t}g^{\alpha \beta}\frac{1}{4}\Bigl( \R -\frac{N}{2t}\Bigl) ^{-1}\frac{\partial \R}{\partial x^i}g_{\alpha \beta}\\
=&\nabla _i \R -g^{jk}\nabla _k R_{ji}+\frac{1}{2}\Bigl( \R -\frac{N}{2t}\Bigl) ^{-1}\Bigl( \frac{\partial \R}{\partial x^j}R^j _{i}-\frac{\partial \R}{\partial x^i}\R \Bigl)\\
&+\frac{N}{4t}\Bigl( \R -\frac{N}{2t}\Bigl) ^{-1}\frac{\partial \R}{\partial x^i}\,  \quad [\rm{from \, the \, contracted \, Bianchi \, identity \, (\ref{eq:1})}] \\
=&\nabla _i \R -\frac{1}{2}\nabla _i \R+\frac{1}{2}\Bigl( \R-\frac{N}{2t}\Bigl) ^{-1}\frac{\partial \R}{\partial x^j}R^j _{i}-\frac{1}{2}\Bigl( \R-\frac{N}{2t}\Bigl) ^{-1}\frac{\partial \R}{\partial x^i}\Bigl( \R-\frac{N}{2t}\Bigl) \\
=&\frac{1}{2}\Bigl( \R-\frac{N}{2t}\Bigl) ^{-1}\frac{\partial \R}{\partial x^{j}}R^{j}_{i}\, , \\
\tilde{R}_{0\alpha}=&\tilde{g}^{00}\tilde{R}_{0 i \alpha j}+\tilde{g}^{\beta \gamma}\tilde{R}_{0 \beta \alpha \gamma}=0\, , \\
\tilde{R}_{i \alpha}=&\tilde{g}^{00}\tilde{R}_{i0\alpha 0}+\tilde{g}^{jk}\tilde{R}_{ij\alpha k}+\tilde{g}^{\beta \gamma}\tilde{R}_{i\beta \alpha \gamma}=0\, , \\
\tilde{R}_{\alpha \beta}=&\tilde{g}^{00}\tilde{R}_{\alpha 0 \beta 0}+\tilde{g}^{ij}\tilde{R}_{\alpha i \beta j}+\tilde{g}^{\gamma \delta}\tilde{R}_{\alpha \gamma \beta \delta}\\
=&\frac{1}{4t}\Bigl( \R -\frac{N}{2t}\Bigl) ^{-1}g_{\alpha \beta}+\frac{1}{4}\Bigl( \frac{\partial \R}{\partial t}+\frac{N}{2t^2}\Bigl) \Bigl( \R -\frac{N}{2t}\Bigl) ^{-2}g_{\alpha \beta}\\
&+\frac{1}{2}\Bigl( \R -\frac{N}{2t}\Bigl) ^{-1}\R g_{\alpha \beta}
-\frac{1}{2 N}\Bigl( N g_{\alpha \beta}-g_{\alpha \beta}\Bigl) \\
&-\frac{1}{4t}\Bigl( \R -\frac{N}{2t}\Bigl) ^{-1}\Bigl( N g_{\alpha \beta}-g_{\alpha \beta}\Bigl) \\
=&\Bigl( \R -\frac{N}{2t}\Bigl) ^{-2}g_{\alpha \beta}\Bigl\{ \frac{\R}{4t}-\frac{N}{8t^2}+\frac{1}{4}\Bigl( \frac{\partial \R}{\partial t}+\frac{N}{2t^2}\Bigl) +\frac{\R ^2}{2}-\frac{\R}{4t}-\frac{\R ^2}{2}+\frac{\R N}{2t}\\
&-\frac{N^2}{8t^2}+\frac{\R ^2}{2N}-\frac{\R N}{2Nt}+\frac{N}{8t^2}-\frac{\R N}{4t}+\frac{N^2}{8t^2}+\frac{\R}{4t}-\frac{N}{8t^2}\Bigl\} \\
&=\frac{1}{4}\Bigl( \R -\frac{N}{2t}\Bigl) ^{-2}\Bigl( \frac{\partial \R}{\partial t}+\frac{2\R ^2}{N}\Bigl) g_{\alpha \beta}\, , \\
\end{align*}

\begin{align*}
\tilde{R}_{ij}=&\tilde{g}^{00}\tilde{R}_{i0j0}+\tilde{g}^{kl}\tilde{R}_{ikjl}+\tilde{g}^{\alpha \beta}\tilde{R}_{i\alpha j \beta}\\
=&\Bigl( \R -\frac{N}{2t}\Bigl) ^{-1}(\Delta R_{ij} +2R_{ikjl}R^{kl}-\frac{1}{2}\nabla _i \nabla _j \R -R^m_j R_{im})\\
&-\frac{1}{2}\Bigl( \R -\frac{N}{2t}\Bigl) ^{-2}(\frac{\partial \R}{\partial t}R_{ij}-\frac{1}{2}\frac{\partial \R}{\partial x^i}\frac{\partial \R}{\partial x^j}+\frac{N}{2t^2}R_{ij})\\
&-\Bigl( \R -\frac{N}{2t}\Bigl) ^{-1}(R_{ij}\R -R^m _i R_{mj})+R_{ij}+\frac{N}{2t}(\R -\frac{N}{2t})^{-1}R_{ij}\\
=&\Bigl( \R -\frac{N}{2t}\Bigl) ^{-1}(\Delta R_{ij}+2R_{ikjl}R^{kl}-\frac{1}{2}\nabla _{i}\nabla _{j}\R )\\
&+\frac{1}{2}\Bigl( \R -\frac{N}{2t}\Bigl) ^{-2}\Bigl( \frac{1}{2}\frac{\partial \R }{\partial x^{i}}\frac{\partial \R}{\partial x^{j}}-\Bigl( \frac{\partial \R}{\partial t}+\frac{N}{2t^2}\Bigl) R_{ij}\Bigl) \, .
\end{align*}
If we take a limit as $N\rightarrow \infty$, then we see all of the components of Ricci tensor converge to zero. 
Moreover, one can see that the norm (could be negative since the metric $\tilde{g}$ is Lorentzian metric) of Ricci tensor is also zero up to errors of order $\frac{1}{N}$. \\
\\
From the equations (\ref{eq:13}), (\ref{eq:113}), (\ref{eq:1113}), when we take the limit $N\rightarrow \infty$, we have

\begin{equation*}
\begin{aligned}
\tilde{R}_{ijkl}\longrightarrow &R_{ijkl} \, ,\\
\tilde{R}_{ij0k}\longrightarrow &\nabla _i R_{jk}-\nabla _j R_{ik} \, ,\\
\tilde{R}_{i0j0}\longrightarrow &\Delta R_{ij} +2R_{ikjl}R^{kl}-R^m_j R_{im}-\frac{1}{2}\nabla _i \nabla _j \R +\frac{1}{2t} R_{ij} \, .
\end{aligned}
\end{equation*}
Hence, Hamilton's Harnack expression (defined in Theorem 1.1) appears as the components of the full curvature tensor.  
\end{proof}
\begin{remark}
{\rm Note that our calculation results in this section hold when we consider the components of the tensors as a function on $\tilde{M}$. 
We cannot regard the right hand side of these equations as the tensor of $\tilde{M}$. 
For example, we have
\begin{equation*}
\tilde{R}_{i\alpha j \beta}=\tilde{R}_{\alpha i j \beta}
\end{equation*}
by the symmetric property of the curvature tensor of course. 
However, in the right hand side of the following equation
\begin{equation*}
\tilde{R}_{\alpha i \beta j}=\frac{1}{2}\Bigl( \R -\frac{N}{2t}\Bigl) ^{-1}g_{\alpha \beta}R_{ij}\, , 
\end{equation*}
the indices $i$ and $\alpha$ is not interchanged, because this operation is not well-defined. Hence, we should be careful when we compute tensors on $\tilde{M}$}. 
\end{remark}
Now we consider the differential equations for the coefficients of the curvature $\tilde{R}_{ijkl}$\, , $\tilde{R}_{ij0k}$\, , $\tilde{R}_{0i0j}$\, . 
The result of the following computations are not necessary to prove the Main Theorem, 
but it is useful to understand the structure of the hyperbolic thermostat. \\ 
\\
At first, we compute the Laplacian of $\tilde{R}_{abcd}$\, . We define the Laplacian with respect to $\tilde{g}$\, , 
\begin{equation*}
\tilde{\Delta}:=\tilde{g}^{ab}\tilde{\nabla}_a \tilde{\nabla}_b \, . 
\end{equation*}
In fact, the term including $\tilde{g}^{00}$ goes to zero up to errors of order $\frac{1}{N}$ since $g^{00}$ is of magnitude $O(\frac{1}{N})$ . \\
\\
We often write 
$$A \underset{N}{=} B $$
that means ``A is equal to B up to errors of order $\frac{1}{N}$'' .\\
\\
Using the formula for the tensor $\tilde{A}\in \otimes ^4 T^{\ast}\tilde{M}$, 
\begin{equation}\label{eq:121}
\begin{aligned}
\tilde{\nabla}_k \tilde{\nabla}_l \tilde{A}_{abcd}=&\frac{\partial ^2 \tilde{A}_{abcd}}{\partial x^k \partial x^l}-\frac{\partial \tilde{\Gamma ^m _{la}}}{\partial x^k}\tilde{A}_{mbcd}
-\frac{\partial \tilde{\Gamma ^m _{lb}}}{\partial x^k}\tilde{A}_{amcd}-\frac{\partial \tilde{\Gamma ^m _{lc}}}{\partial x^k}\tilde{A}_{abmd}-\frac{\partial \tilde{\Gamma ^m _{ld}}}{\partial x^k}\tilde{A}_{abcm}\\
&-\tilde{\Gamma} ^m _{la}\frac{\partial }{\partial x^k}\tilde{A}_{mbcd}-\tilde{\Gamma} ^m _{lb}\frac{\partial }{\partial x^k}\tilde{A}_{amcd}
-\tilde{\Gamma} ^m _{lc}\frac{\partial }{\partial x^k}\tilde{A}_{abmd}-\tilde{\Gamma} ^m _{ld}\frac{\partial }{\partial x^k}\tilde{A}_{abcm}\\
&-\tilde{\Gamma} ^m _{kl}\frac{\partial }{\partial x^m}\tilde{A}_{abcd}-\tilde{\Gamma} ^m _{ka}\frac{\partial }{\partial x^l}\tilde{A}_{mbcd}-\tilde{\Gamma} ^m _{kb}\frac{\partial }{\partial x^l}\tilde{A}_{amcd}\\
&-\tilde{\Gamma} ^m _{kc}\frac{\partial }{\partial x^l}\tilde{A}_{abmd}-\tilde{\Gamma} ^m _{kd}\frac{\partial }{\partial x^l}\tilde{A}_{abcm}\\
&+\tilde{\Gamma} ^m _{kl} \tilde{\Gamma} ^n _{ma}\tilde{A}_{nbcd}+\tilde{\Gamma} ^m _{kl} \tilde{\Gamma} ^n _{mb}\tilde{A}_{ancd}
+\tilde{\Gamma} ^m _{kl} \tilde{\Gamma} ^n _{mc}\tilde{A}_{abnd}+\tilde{\Gamma} ^m _{kl} \tilde{\Gamma} ^n _{md}\tilde{A}_{abcn}\\
&+\tilde{\Gamma} ^m _{ka} \tilde{\Gamma} ^n _{lm}\tilde{A}_{nbcd}+\tilde{\Gamma} ^m _{ka} \tilde{\Gamma} ^n _{lb}\tilde{A}_{mncd}
+\tilde{\Gamma} ^m _{ka} \tilde{\Gamma} ^n _{lc}\tilde{A}_{mbnd}+\tilde{\Gamma} ^m _{ka} \tilde{\Gamma} ^n _{ld}\tilde{A}_{mbcn}\\
&+\tilde{\Gamma} ^m _{kb} \tilde{\Gamma} ^n _{la}\tilde{A}_{nmcd}+\tilde{\Gamma} ^m _{kb} \tilde{\Gamma} ^n _{lm}\tilde{A}_{ancd}
+\tilde{\Gamma} ^m _{kb} \tilde{\Gamma} ^n _{lc}\tilde{A}_{amnd}+\tilde{\Gamma} ^m _{kb} \tilde{\Gamma} ^n _{ld}\tilde{A}_{amcn}\\
&+\tilde{\Gamma} ^m _{kc} \tilde{\Gamma} ^n _{la}\tilde{A}_{nbmd}+\tilde{\Gamma} ^m _{kc} \tilde{\Gamma} ^n _{lb}\tilde{A}_{anmd}
+\tilde{\Gamma} ^m _{kc} \tilde{\Gamma} ^n _{lm}\tilde{A}_{abnd}+\tilde{\Gamma} ^m _{kc} \tilde{\Gamma} ^n _{ld}\tilde{A}_{abmn}\\
&+\tilde{\Gamma} ^m _{kd} \tilde{\Gamma} ^n _{la}\tilde{A}_{nbcm}+\tilde{\Gamma} ^m _{kd} \tilde{\Gamma} ^n _{lb}\tilde{A}_{ancm}
+\tilde{\Gamma} ^m _{kd} \tilde{\Gamma} ^n _{lc}\tilde{A}_{abnm}+\tilde{\Gamma} ^m _{kd} \tilde{\Gamma} ^n _{lm}\tilde{A}_{abcn} \, ,
\end{aligned}
\end{equation}
and taking the trace with respect to the metric $\tilde{g}$, we have 
\begin{equation}\label{eq:105}
\begin{aligned}
\tilde{\Delta}\tilde{R}_{ijkl}=&\tilde{g}^{ab}[\tilde{\nabla}_a \tilde{\nabla}_b \tilde{R}_{ijkl}]\\
\underset{N}{=}&g^{mn}[\tilde{\nabla}_m \tilde{\nabla}_n \tilde{R}_{ijkl}]+\tilde{g}^{\alpha \beta} [\tilde{\nabla}_{\alpha} \tilde{\nabla}_{\beta} \tilde{R}_{ijkl}]\\
\underset{N}{=}&g^{mn}[ \nabla _m \nabla _n \tilde{R}_{ijkl}]-\tilde{g}^{\alpha \beta}\Bigl[\tilde{\Gamma} ^0 _{\alpha \beta} \tilde{\nabla}_{0} \tilde{R}_{ijkl}\Bigl] \\
\underset{N}{=}&\Delta R_{ijkl}-\tilde{\nabla}_{0} \tilde{R}_{ijkl} \, ,
\end{aligned}
\end{equation}
\begin{align*}
\tilde{\Delta}\tilde{R}_{ij0k}=&g^{mn}[\tilde{\nabla}_m \tilde{\nabla}_n \tilde{R}_{ij0k}]+\tilde{g}^{\alpha \beta}[\tilde{\nabla}_{\alpha} \tilde{\nabla}_{\beta} \tilde{R}_{ij0k}]\\
=&g^{mn}\Bigl[ \nabla _m \nabla _n \tilde{R}_{ij0k}-\frac{\partial \tilde{\Gamma ^p _{n0}}}{\partial x^m}\tilde{R}_{ijpk}
-\tilde{\Gamma} ^p _{n0}\frac{\partial }{\partial x^m}\tilde{R}_{ij0k}
-\tilde{\Gamma} ^p _{m0}\frac{\partial }{\partial x^n}\tilde{R}_{ijpk}\\
&+\tilde{\Gamma} ^p _{mn} \tilde{\Gamma} ^q _{p0}\tilde{R}_{ijqk}
+\tilde{\Gamma} ^p _{mi} \tilde{\Gamma} ^q _{n0}\tilde{R}_{pjqk}
+\tilde{\Gamma} ^p _{mj} \tilde{\Gamma} ^q _{n0}\tilde{R}_{ipqk}
+\tilde{\Gamma} ^p _{m0} \tilde{\Gamma} ^q _{nj}\tilde{R}_{qjpk}\\
&+\tilde{\Gamma} ^p _{m0} \tilde{\Gamma} ^q _{nj}\tilde{R}_{iqpk}
+\tilde{\Gamma} ^p _{m0} \tilde{\Gamma} ^q _{lp}\tilde{R}_{ijqk}+\tilde{\Gamma} ^p _{m0} \tilde{\Gamma} ^q _{nk}\tilde{R}_{ijpq}
+\tilde{\Gamma} ^p _{mk} \tilde{\Gamma} ^q _{n0}\tilde{R}_{ijqp}\Bigl] \\
&-\tilde{g}^{\alpha \beta}\Bigl[\tilde{\Gamma} ^0 _{\alpha \beta} \tilde{\nabla}_{0} \tilde{R}_{ij0k}-\tilde{\Gamma}^{\gamma} _{\alpha 0} \tilde{\Gamma}^0 _{\beta \gamma} \tilde{R_{ij0k}} \Bigl]\\
\underset{N}{=}&g^{mn}[\nabla _m \nabla _n P_{ijk}+(\nabla _m R^p _n )R_{ijpk}+R^p _n\nabla _m R_{ijpk}-R^p _m\nabla _n R_{ijpk}]\\
&-\tilde{\nabla} _{0} \tilde{R}_{ij0k}+\frac{1}{2t}P_{ijk}\, , \\
\end{align*}
\begin{align*}
\tilde{\Delta}\tilde{R}_{i0j0}=&g^{kl}[\tilde{\nabla}_k \tilde{\nabla}_l \tilde{R}_{i0j0}]+\tilde{g}^{\alpha \beta}[\tilde{\nabla}_{\alpha} \tilde{\nabla}_{\beta} \tilde{R}_{i0j0}]\\
=&g^{kl}\Bigl[\nabla _k \nabla _l \tilde{R}_{i0j0}-\frac{\partial \tilde{\Gamma ^p _{l0}}}{\partial x^k}\tilde{R}_{ipj0}-\frac{\partial \tilde{\Gamma ^p _{l0}}}{\partial x^k}\tilde{R}_{i0jp}\\
&-\tilde{\Gamma} ^p _{l0}\frac{\partial }{\partial x^k}\tilde{R}_{ipj0}-\tilde{\Gamma} ^p _{l0}\frac{\partial }{\partial x^k}\tilde{R}_{i0jp}
-\tilde{\Gamma} ^p _{k0}\frac{\partial }{\partial x^l}\tilde{R}_{ipj0}-\tilde{\Gamma} ^p _{k0}\frac{\partial }{\partial x^l}\tilde{R}_{i0jp}\\
&+\tilde{\Gamma} ^p _{kl} \tilde{\Gamma} ^q _{p0}\tilde{R}_{iqj0}+\tilde{\Gamma} ^p _{kl} \tilde{\Gamma} ^q _{p0}\tilde{R}_{i0jq}
+\tilde{\Gamma} ^p _{ki} \tilde{\Gamma} ^q _{l0}\tilde{R}_{pqj0}+\tilde{\Gamma} ^p _{ki} \tilde{\Gamma} ^q _{l0}\tilde{R}_{p0jq}\\
&+\tilde{\Gamma} ^p _{k0} \tilde{\Gamma} ^q _{li}\tilde{R}_{qpj0}+\tilde{\Gamma} ^p _{k0} \tilde{\Gamma} ^q _{lp}\tilde{R}_{iqj0}
+\tilde{\Gamma} ^p _{k0} \tilde{\Gamma} ^q _{lj}\tilde{R}_{ipq0}+\tilde{\Gamma} ^p _{k0} \tilde{\Gamma} ^q _{l0}\tilde{R}_{ipjq}\\
&+\tilde{\Gamma} ^p _{kj} \tilde{\Gamma} ^q _{l0}\tilde{R}_{iqp0}+\tilde{\Gamma} ^p _{kj} \tilde{\Gamma} ^q _{l0}\tilde{R}_{i0jq}
+\tilde{\Gamma} ^p _{k0} \tilde{\Gamma} ^q _{li}\tilde{R}_{q0jp}+\tilde{\Gamma} ^p _{k0} \tilde{\Gamma} ^q _{l0}\tilde{R}_{iqjp}\\
&+\tilde{\Gamma} ^p _{k0} \tilde{\Gamma} ^q _{lj}\tilde{R}_{i0qp}+\tilde{\Gamma} ^p _{k0} \tilde{\Gamma} ^q _{lp}\tilde{R}_{i0jq}\Bigl]\\
&-\tilde{g}^{\alpha \beta}\Bigl[ \tilde{\Gamma}^{0}_{\alpha \beta}\tilde{\nabla}_{0}\tilde{R}_{i0j0}-2\tilde{\Gamma} ^{\gamma} _{\alpha 0} \tilde{\Gamma} ^0 _{\beta \gamma}\tilde{R}_{i0j0}\\
&-\tilde{\Gamma} ^{\gamma} _{\alpha 0} \tilde{\Gamma} ^{\delta} _{\beta 0}\tilde{R}_{i\gamma j \delta}
-\tilde{\Gamma} ^{\gamma} _{\alpha 0} \tilde{\Gamma} ^{\delta} _{\beta 0}\tilde{R}_{i\delta j \gamma} \Bigl]\\
\underset{N}{=}&g^{kl}[\nabla _k \nabla _l M_{ij}+\nabla _k R^p _l P_{ipj}+\nabla _k R^p _l P_{jpi}\\
&-R^p _l\nabla _k P_{ipj}-R^p _l\nabla _k P_{jpi}-R^p _k\nabla _l P_{ipj}-R^p _k\nabla _l P_{jpi}\\
&+\tilde{\Gamma} ^p _{k0} \bar{\Gamma} ^q _{l0}\tilde{R}_{ipjq}+\tilde{\Gamma} ^p _{k0} \tilde{\Gamma} ^q _{l0}\tilde{R}_{iqjp}]-\tilde{\nabla}_{0}\tilde{R}_{i0j0}+\frac{1}{t} M_{ij}-\frac{1}{2t^2}R_{ij}\\
\underset{N}{=}&\Delta M_{ij}-\tilde{\nabla}_{0}\tilde{R}_{i0j0}+\frac{1}{2}\nabla ^p \R [P_{pij}+P_{pji}]\\
&+2R^{pq}[\nabla _q P_{pij}+\nabla _q P_{pji}]+2R^p_k R^{kq}R_{iqjp}+\frac{1}{t}M_{ij}-\frac{1}{2t^2}R_{ij} \, ,
\end{align*}
where \begin{equation*}
\tilde{\Delta} \tilde{R}_{abcd}:=(\tilde{\Delta} \widetilde{\Rm } )\Bigl( \frac{\partial}{\partial x^a}, \frac{\partial}{\partial x^b}, \frac{\partial}{\partial x^c}, \frac{\partial}{\partial x^d} \Bigl) \, .
\end{equation*}
Therefore, we have 
\begin{equation}\label{eq:333}
\begin{aligned}
\tilde{\Delta}\tilde{R}_{ijkl}\underset{N}{=}&\Delta R_{ijkl}-\tilde{\nabla}_{0} \tilde{R}_{ijkl} \, , \\
\tilde{\Delta}\tilde{R}_{ij0k}\underset{N}{=}&\Delta P_{ijk}-\tilde{\nabla}_{0} \tilde{R}_{ij0k} +\frac{1}{2}(\nabla ^m R)R_{ijmk}+2R^{lm}\nabla _l R_{ijmk} +\frac{1}{2t}P_{ijk} \, , \\
\tilde{\Delta}\tilde{R}_{i0j0}\underset{N}{=}&\Delta M_{ij}-\tilde{\nabla}_{0} \tilde{R}_{i0j0}+\frac{1}{2}\nabla ^{m} \R (P_{mij}+P_{mji})\\
&+2R^{mn}(\nabla _n P_{mij}+\nabla _n P_{mji})+2R^m_k R^{kn}R_{injm}+\frac{1}{t}M_{ij} -\frac{1}{2t^2}R_{ij} \, ,
\end{aligned}
\end{equation}
Moreover, we have 
\begin{equation}\label{eq:25}
\begin{aligned}
\tilde{\nabla}_{0}\tilde{R}_{ijkl} \underset{N}{=}&\frac{\partial}{\partial t}R_{ijkl}+R^m_i R_{mjkl} +R^m_j R_{imkl} +R^m_k R_{ijml} +R^m_l R_{ijkm} \, , \\ 
\tilde{\nabla}_{0}\tilde{R}_{ij0k} \underset{N}{=}&\frac{\partial}{\partial t}P_{ijk}+R^m_i P_{mjk} +R^m_j P_{imk}\\
&+R^m_k P_{ijm} +\frac{1}{2}(\nabla ^m R)R_{ijmk}+\frac{1}{2t}P_{ijk} \, , \\
\tilde{\nabla}_{0}\tilde{R}_{i0j0} \underset{N}{=}&\frac{\partial}{\partial t}M_{ij}+R^m_i M_{mj} +R^m_j M_{im}+\frac{1}{2}\nabla ^m \R (P_{mij}+P_{mji})+\frac{1}{t}M_{ij}\, ,
\end{aligned}
\end{equation}
where 
\begin{equation*}
\tilde{\nabla}_0 \tilde{R}_{abcd}:=(\tilde{\nabla} _{\frac{\partial}{\partial t}}\widetilde{\Rm} )(\frac{\partial}{\partial x^a}, \frac{\partial}{\partial x^b}, \frac{\partial}{\partial x^c}, \frac{\partial}{\partial x^d} ) \, . 
\end{equation*}
Indeed, by choosing an orthonormal frames on $M\times \mathbb{H}^N$ at a point, we compute  
\begin{align*}
\tilde{\nabla}_{0}\tilde{R}_{ijkl}=&\frac{\partial}{\partial t}\tilde{R}_{ijkl}-\tilde{\Gamma}^m_{0i} \tilde{R}_{mjkl}-\tilde{\Gamma}^m_{0j}\tilde{R}_{imkl}-\tilde{\Gamma}^m_{0k} \tilde{R}_{ijml}-\tilde{\Gamma}^m_{0l} \tilde{R}_{ijkm}\\
\underset{N}{=}&\frac{\partial}{\partial t}R_{ijkl}+R^m_i R_{mjkl} +R^m_j R_{imkl} +R^m_k R_{ijml} +R^m_l R_{ijkm} \, , \\ 
\tilde{\nabla}_{0}\tilde{R}_{ij0k}=&\frac{\partial}{\partial t}\tilde{R}_{ij0k}-\tilde{\Gamma}^m_{0i} \tilde{R}_{mj0k}-\tilde{\Gamma}^m_{0j}\tilde{R}_{im0k}-\tilde{\Gamma}^a_{00} \tilde{R}_{ijak}-\tilde{\Gamma}^m_{0k} \tilde{R}_{ij0m}\\
\underset{N}{=}&\frac{\partial}{\partial t}P_{ijk}+R^m_i P_{mjk} +R^m_j P_{imk} +\frac{1}{2}(\nabla ^m \R )R_{ijmk}\\
&+\frac{1}{2t}P_{ijk}+R^m_k P_{ijm}\, , \\
\tilde{\nabla}_{0}\tilde{R}_{i0j0}=&\frac{\partial}{\partial t}\tilde{R}_{i0j0}-\tilde{\Gamma}^m_{0i} \tilde{R}_{m0j0}-\tilde{\Gamma}^a_{00}\tilde{R}_{iaj0}-\tilde{\Gamma}^m_{0j} \tilde{R}_{i0m0}-\tilde{\Gamma}^a_{00} \tilde{R}_{i0ja}\\
\underset{N}{=}&\frac{\partial}{\partial t}M_{ij}+R^m_i M_{mj} +R^m_j M_{im}+\frac{1}{2}\nabla ^m \R (P_{mij}+P_{mji})+\frac{1}{t}M_{ij}\, .
\end{align*}
From the equations $(\ref{eq:333})$, $(\ref{eq:25})$, $(\ref{e:7})$, and Lemma 2.1, we have
\begin{equation}\label{eq:3333}
\begin{aligned}
\tilde{\Delta}\tilde{R}_{ijkl}\underset{N}{=}&-2R_{imjn}R^{\,\,\, m\,\, n}_{k \,\,\,\, l}+2R_{imjn}R^{\,\,\, m\,\, n}_{l \,\,\,\, k}-2R_{imkn}R^{\,\,\, m\,\, n}_{j \,\,\,\, l}+2R_{imln}R^{\,\,\, m\,\, n}_{j \,\,\,\, k} \, .\\
\tilde{\Delta}\tilde{R}_{ij0k}\underset{N}{=}&-2R_{imjn}P^{mn} _{\,\,\,\,\,\, k} -2R_{jmkn}P_{i}^{\,\,\, mn}-2R_{imkn}P_{j}^{\,\,\, mn} \, , \\
\tilde{\Delta}\tilde{R}_{i0j0}\underset{N}{=}&-2R_{imjn}M^{mn}+2P_{imn} P_{j}^{\,\,\, mn}+4P_{imn}P_{j}^{\,\, nm} \, .
\end{aligned}
\end{equation}

In fact, we can deduce these equations by using another method, which is not rigorous. 
If a Riemannian manifold is Ricci flat, the evolution equation $(\ref{e:7})$ can be described as 
\begin{equation*}
\begin{aligned}
\Delta R_{ijkl}=-2R_{imjn}R^{\,\,\, m\,\, n}_{k \,\,\,\, l}+2R_{imjn}R^{\,\,\, m\,\, n}_{l \,\,\,\, k}-2R_{imkn}R^{\,\,\, m\,\, n}_{j \,\,\,\, l}+2R_{imln}R^{\,\,\, m\,\, n}_{j \,\,\,\, k} \, .
\end{aligned}
\end{equation*}
When we formally apply this equation to the components of the full curvature tensor $\tilde{R}_{abcd}$ , we get the equation which is equivalent to $(\ref{eq:3333})$. \\
\\
We consider the curvature operator as a section of $Sym^2(\wedge ^2 T^{\ast}\tilde{M})$ . 
The curvature operator is a symmetric operator on the space of 2-forms $\tilde{U}^{ab}$ on $\tilde{M}$ defined by 
$$\widetilde{\Rm }(\tilde{U}, \tilde{U})=\tilde{R}_{abcd}\tilde{U}^{ab}\tilde{U}^{cd} \, .$$
Then, the Harnack expression appears as a part of $\widetilde{\Rm }(\tilde{U}, \tilde{U})$ 
in the sense that all terms other than those involved in the Harnack expression is of magnitude $O(\frac{1}{N})$ as $N \rightarrow \infty$. 
However, if $(\tilde{U}^{ab})$'s with at least one index from the $\mathbb{H}^N$ factor are chosen independent of $N$, then $\widetilde{\Rm}(\tilde{U}, \tilde{U})$ 
diverges as $N \rightarrow \infty$ since the dimension of $\tilde{M}^{n+N+1}$ goes to $\infty$. 
Therefore, we are forced to consider the restriction to $\bar{M}=M\times \mathbb{R}^{+}$ which we will discuss in the next section. \\
\\
\subsection{The differential equations for the coefficients of the curvature tensors on the space-time}
As is observed at the end of the previous section, the quantity $\widetilde{\Rm}(\tilde{U}, \tilde{U})$ diverges as $N \rightarrow \infty$ 
unless we do not introduce $N$-dependence in the part of $(\tilde{U}^{ab})$'s which include at least one index from the $\mathbb{H}^N$-part. 
Therefore, we should introduce an $N$-dependence on such $\tilde{U}^{ab}$'s so that the contribution from this part becomes negligible as $N \rightarrow \infty$. 
The simplest way in doing so is that we choose to work on the restriction to the slice $\bar{M}:=M\times \mathbb{R}^{+}$ defined by neglecting $\mathbb{H}^N$-component. 
We define the metric $\bar{g}$ on $\bar{M}$ as follows: 
\begin{align*}
\bar{g}_{00}=\R -\frac{N}{2t} \, ,\quad \bar{g}_{ij}=g_{ij} \, ,\quad \bar{g}_{0i}=0 \, , 
\end{align*}
where $i, j$ are coordinate indices on the $M$ factor, and 0 represent the index of the time coordinate $t$. 
Then, $(\bar{M}, \bar{g})$ is not always Ricci flat up to errors of order $\frac{1}{N}$. 
However, The full curvature tensor of the metric $\bar{g}$ gives Hamilton's Harnack expression. 
In the same way in section 3.2, we see that how the curvature tensor $\bar{R}_{abcd}$ evolves in the direction $\mathbb{R}^+$. 
Note that we compute at a point $x^i \in M $ . 
Then, we get the following equations:
\begin{equation}\label{eq:2}
\begin{aligned}
\bar{\nabla}_{0}\bar{R}_{ijkl} \underset{N}{=}&\frac{\partial}{\partial t}R_{ijkl}+R^m_i R_{mjkl} +R^m_j R_{imkl} +R^m_k R_{ijml} +R^m_l R_{ijkm}\, , \\ 
\bar{\nabla}_{0}\bar{R}_{ij0k} \underset{N}{=}&\frac{\partial}{\partial t}P_{ijk}+R^m_i P_{mjk} +R^m_j P_{imk} +R^m_k P_{ijm} \\
&+\frac{1}{2}(\nabla ^m \R )R_{ijmk}+\frac{1}{2t}P_{ijk}\, , \\
\bar{\nabla}_{0}\bar{R}_{i0j0} \underset{N}{=}&\frac{\partial}{\partial t}M_{ij}+R^m_i M_{mj} +R^m_j M_{im}+\frac{1}{2}\nabla ^m \R (P_{mij}+P_{mji})+\frac{1}{t}M_{ij}\, ,
\end{aligned}
\end{equation}
where 
\begin{equation*}
\bar{\nabla}_0 \bar{R}_{abcd}:=(\bar{\nabla} _{\frac{\partial}{\partial t}}\overline{\Rm} )\Bigl( \frac{\partial}{\partial x^a}, \frac{\partial}{\partial x^b}, \frac{\partial}{\partial x^c}, \frac{\partial}{\partial x^d}\Bigl) \, .
\end{equation*} 
Moreover, computing the Laplacian of $\bar{R}_{abcd}$ by using the formula $(\ref{eq:121})$, we have 
\begin{equation}\label{eq:3}
\begin{aligned}
\bar{\Delta}\bar{R}_{ijkl} \underset{N}{=}&\Delta R_{ijkl}\, , \\
\bar{\Delta}\bar{R}_{ij0k} \underset{N}{=}&\Delta P_{ijk}+\frac{1}{2}(\nabla ^m \R )R_{ijmk}+2R^{lm}\nabla _l R_{ijmk}\, , \\
\bar{\Delta}\bar{R}_{i0j0} \underset{N}{=}&\Delta M_{ij}+\frac{1}{2}\nabla ^m \R [P_{mij}+P_{mji}]+2R^{mn}[\nabla _n P_{mij}+\nabla _n P_{mji}] \\
&+2R^m_k R^{kn}R_{injm}\, ,
\end{aligned}
\end{equation}
where 
\begin{equation*}
\bar{\Delta} \bar{R}_{abcd}:=(\bar{\Delta} \overline{\Rm} )\Bigl( \frac{\partial}{\partial x^a}, \frac{\partial}{\partial x^b}, \frac{\partial}{\partial x^c}, \frac{\partial}{\partial x^d} \Bigl) \, .
\end{equation*}
Indeed, 
\begin{align*}
\bar{\Delta}\bar{R}_{ij0k}=&\bar{g}^{ab}[\bar{\nabla}_a \bar{\nabla}_b \bar{R}_{ij0k}]\\
 \underset{N}{=}&g^{mn}[\bar{\nabla}_m \bar{\nabla}_n \bar{R}_{ij0k}]\\
 \underset{N}{=}&g^{mn}\Bigl[ \nabla _m \nabla _n \bar{R}_{ij0k}-\frac{\partial \bar{\Gamma ^p _{n0}}}{\partial x^m}\bar{R}_{ijpk}
-\bar{\Gamma} ^p _{n0}\frac{\partial }{\partial x^m}\bar{R}_{ij0k}
-\bar{\Gamma} ^p _{m0}\frac{\partial }{\partial x^n}\bar{R}_{ijpk}\\
&+\bar{\Gamma} ^p _{mn} \bar{\Gamma} ^q _{p0}\bar{R}_{ijqk}
+\bar{\Gamma} ^p _{mi} \bar{\Gamma} ^q _{n0}\bar{R}_{pjqk}
+\bar{\Gamma} ^p _{mj} \bar{\Gamma} ^q _{n0}\bar{R}_{ipqk}
+\bar{\Gamma} ^p _{m0} \bar{\Gamma} ^q _{nj}\bar{R}_{qjpk}\\
&+\bar{\Gamma} ^p _{m0} \bar{\Gamma} ^q _{nj}\bar{R}_{iqpk}
+\bar{\Gamma} ^p _{m0} \bar{\Gamma} ^q _{lp}\bar{R}_{ijqk}+\bar{\Gamma} ^p _{m0} \bar{\Gamma} ^q _{nk}\bar{R}_{ijpq}
+\bar{\Gamma} ^p _{mk} \bar{\Gamma} ^q _{n0}\bar{R}_{ijqp}\Bigl] \\
 \underset{N}{=}&g^{mn}[\nabla _m \nabla _n P_{ijk}+(\nabla _m R^p _n )R_{ijpk}+R^p _n\nabla _m R_{ijpk}-R^p _m\nabla _n R_{ijpk}]\\
 \underset{N}{=}&\Delta P_{ijk}+\frac{1}{2}(\nabla ^m \R )R_{ijmk}+2R^{lm}\nabla _l R_{ijmk} \, ,
\end{align*}
\begin{align*}
\bar{\Delta}\bar{R}_{i0j0} \underset{N}{=}&g^{kl}[\bar{\nabla}_k \bar{\nabla}_l \bar{R}_{i0j0}]\\
 \underset{N}{=}&g^{kl}\Bigl[\nabla _k \nabla _l \bar{R}_{i0j0}-\frac{\partial \bar{\Gamma ^p _{l0}}}{\partial x^k}\bar{R}_{ipj0}-\frac{\partial \bar{\Gamma ^p _{l0}}}{\partial x^k}\bar{R}_{i0jp}\\
&-\bar{\Gamma} ^p _{l0}\frac{\partial }{\partial x^k}\bar{R}_{ipj0}-\bar{\Gamma} ^p _{l0}\frac{\partial }{\partial x^k}\bar{R}_{i0jp}
-\bar{\Gamma} ^p _{k0}\frac{\partial }{\partial x^l}\bar{R}_{ipj0}-\bar{\Gamma} ^p _{k0}\frac{\partial }{\partial x^l}\bar{R}_{i0jp}\\
&+\bar{\Gamma} ^p _{kl} \bar{\Gamma} ^q _{p0}\bar{R}_{iqj0}+\bar{\Gamma} ^p _{kl} \bar{\Gamma} ^q _{p0}\bar{R}_{i0jq}
+\bar{\Gamma} ^p _{ki} \bar{\Gamma} ^q _{l0}\bar{R}_{pqj0}+\bar{\Gamma} ^p _{ki} \bar{\Gamma} ^q _{l0}\bar{R}_{p0jq}\\
&+\bar{\Gamma} ^p _{k0} \bar{\Gamma} ^q _{li}\bar{R}_{qpj0}+\bar{\Gamma} ^p _{k0} \bar{\Gamma} ^q _{lp}\bar{R}_{iqj0}
+\bar{\Gamma} ^p _{k0} \bar{\Gamma} ^q _{lj}\bar{R}_{ipq0}+\bar{\Gamma} ^p _{k0} \bar{\Gamma} ^q _{l0}\bar{R}_{ipjq}\\
&+\bar{\Gamma} ^p _{kj} \bar{\Gamma} ^q _{l0}\bar{R}_{iqp0}+\bar{\Gamma} ^p _{kj} \bar{\Gamma} ^q _{l0}\bar{R}_{i0jq}
+\bar{\Gamma} ^p _{k0} \bar{\Gamma} ^q _{li}\bar{R}_{q0jp}+\bar{\Gamma} ^p _{k0} \bar{\Gamma} ^q _{l0}\bar{R}_{iqjp}\\
&+\bar{\Gamma} ^p _{k0} \bar{\Gamma} ^q _{lj}\bar{R}_{i0qp}+\bar{\Gamma} ^p _{k0} \bar{\Gamma} ^q _{lp}\bar{R}_{i0jq}\Bigl]\\
 \underset{N}{=}&g^{kl}[\nabla _k \nabla _l M_{ij}+\nabla _k R^p _l P_{ipj}+\nabla _k R^p _l P_{jpi}\\
&-R^p _l\nabla _k P_{ipj}-R^p _l\nabla _k P_{jpi}-R^p _k\nabla _l P_{ipj}-R^p _k\nabla _l P_{jpi}\\
&+\bar{\Gamma} ^p _{k0} \bar{\Gamma} ^q _{l0}\bar{R}_{ipjq}+\bar{\Gamma} ^p _{k0} \bar{\Gamma} ^q _{l0}\bar{R}_{iqjp}]\\
 \underset{N}{=}&\Delta M_{ij}+\frac{1}{2}\nabla ^p \R [P_{pij}+P_{pji}]\\
&+2R^{pq}[\nabla _q P_{pij}+\nabla _q P_{pji}]+2R^p_k R^{kq}R_{iqjp} \, .
\end{align*}
Here, we have used the contracted second Bianchi identity 
$(\ref{eq:1})$ again. 
Moreover, the difference between the equation $(\ref{eq:105})$ and $(\ref{eq:106})$ is whether the derivative of the curvature tensor in the direction $\mathbb{R}^+$ appears or not, 
so that the equation 
\begin{equation}\label{eq:106}
\bar{\Delta} \bar{R}_{ijkl} \underset{N}{=}\Delta R_{ijkl}
\end{equation}
holds (the $\tilde{\nabla}_0$-covariant derivative stems from the covariant derivative in the $\mathbb{H}^N$ direction via the formula $(\ref{eq:121})$). \\
\\
From the equations $(\ref{eq:2})$ and $(\ref{eq:3})$, we have 
\begin{equation*}
\begin{aligned}
(\bar{\nabla}_0 - \bar{\Delta} )\bar{R}_{ijkl}\underset{N}{=}&\Bigl( \frac{\partial}{\partial t}-\Delta \Bigl) R_{ijkl}\\
&+R^m_i R_{mjkl} +R^m_j R_{imkl} +R^m_k R_{ijml} +R^m_l R_{ijkm}\, , \\
(\bar{\nabla}_0- \bar{\Delta} )\bar{R}_{ij0k}\underset{N}{=}&\Bigl( \frac{\partial}{\partial t}-\Delta \Bigl) P_{ijk}+R^m_i P_{mjk} +R^m_j P_{imk} +R^m_k P_{ijm}\\
&+\frac{1}{2t}P_{ijk}-2R^{lm}\nabla _l R_{ijmk}\, , \\
(\bar{\nabla}_0- \bar{\Delta} )\bar{R}_{i0j0}\underset{N}{=}&\Bigl( \frac{\partial}{\partial t}-\Delta \Bigl) M_{ij}+R^m_i M_{mj} +R^m_j M_{im}\\
&+\frac{1}{t}M_{ij}-2R^{mn}(\nabla _n P_{mij}-\nabla _n P_{mji})-2R^m_k R^{kn}R_{injm} \, .
\end{aligned}
\end{equation*}
From (\ref{e:7}) and (\ref{eq:21}), we have the following proposition:
\begin{proposition}
\begin{equation}\label{eq:9}
\begin{aligned}
(\bar{\nabla}_0 - \bar{\Delta} )\bar{R}_{ijkl}\underset{N}{=}&2R_{imjn}R^{\,\, m\,\, n}_{k \,\,\,\, l}-2R_{imjn}R^{\,\, m\,\, n}_{l \,\,\,\, k}\\
&+2R_{imkn}R^{\,\, m\,\, n}_{j \,\,\,\, l}-2R_{imln}R^{\,\, m\,\, n}_{j \,\,\,\, k}\, , \\
(\bar{\nabla}_0- \bar{\Delta} )\bar{R}_{ij0k}\underset{N}{=}&2R_{imjn}P^{mn}_{\,\,\,\,\,\,\,\,\, k}+2R_{imkn}P^{m\,\,\, n}_{\,\,\,\,\,\, j}+2R_{imkn}P^{\,\, m n}_{i}+\frac{1}{2t}P_{ijk}\, , \\
(\bar{\nabla}_0- \bar{\Delta} )\bar{R}_{i0j0}\underset{N}{=}&2R_{imjn}M^{mn}+2P_{imn}P^{\,\, m n}_{j}\\
&-4P_{imn}P^{\,\, n m}_{j}-\frac{1}{2t^2}R_{ij}+\frac{1}{t}M_{ij}\, .
\end{aligned}
\end{equation}
\end{proposition}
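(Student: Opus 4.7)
The plan is to deduce each of the three identities in (\ref{eq:9}) by combining, through direct algebraic cancellation, the formulas already derived in this section. For each of the three components $\bar{R}_{ijkl}$, $\bar{R}_{ij0k}$, $\bar{R}_{i0j0}$, I would first subtract the Laplacian expression in (\ref{eq:3}) from the corresponding time-derivative expression in (\ref{eq:2}); both sides of each are equalities up to errors of order $\frac{1}{N}$, so the difference remains valid to the same precision. In performing this subtraction, the $\frac{1}{2}(\nabla^m \R) R_{ijmk}$ term in the $(ij0k)$ case and the $\frac{1}{2}\nabla^m \R\,(P_{mij}+P_{mji})$ term in the $(i0j0)$ case drop out immediately.

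Next, I would substitute Hamilton's evolution equation (\ref{e:7}) for $(\tfrac{\partial}{\partial t}-\Delta) R_{ijkl}$ and the two formulas in Lemma \ref{lem:1}, namely (\ref{eq:21}), for $(\tfrac{\partial}{\partial t}-\Delta)P_{ijk}$ and $(\tfrac{\partial}{\partial t}-\Delta)M_{ij}$. This trades the heat-operator action on the three base tensors for the explicit quadratic-curvature right-hand sides that we want to see in (\ref{eq:9}), together with various linear Ricci-contraction terms and cross terms such as $R^{mn}\nabla_m R_{ijkn}$ and $R^{mn}R_m^{\,l} R_{injl}$.

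The final step is to verify that these extra terms cancel in pairs. In the $(ijkl)$-case, the four linear terms $R^m_i R_{mjkl}+R^m_j R_{imkl}+R^m_k R_{ijml}+R^m_l R_{ijkm}$ contributed by (\ref{eq:2}) pair off with the four analogous Ricci contractions $-R^m_l R_{ijkm}+R^m_k R_{ijlm}-R^m_j R_{klim}+R^m_i R_{kljm}$ in (\ref{e:7}) after applying the symmetries $R_{ijkl}=-R_{ijlk}$ and $R_{klij}=R_{ijkl}$. In the $(ij0k)$-case the two $\nabla R$-type contributions, $-2R^m{}_n \nabla_m R_{ijk}{}^n$ from Lemma \ref{lem:1} and $-2R^{lm}\nabla_l R_{ijmk}$ from (\ref{eq:3}), cancel using $R_{ijkm}=-R_{ijmk}$ and the symmetry of $R^{ij}$. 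The $(i0j0)$-case is analogous: the terms $2R^{mn}(\nabla_m P_{nij}+\nabla_m P_{nji})$ from Lemma \ref{lem:1} cancel $-2R^{mn}(\nabla_n P_{mij}+\nabla_n P_{mji})$ from (\ref{eq:3}) by the symmetry of the Ricci tensor in its two contracted indices, while the two cubic curvature contributions also combine to zero after renaming dummy indices.

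The main obstacle is combinatorial rather than conceptual: each of the three cases produces a long list of tensorial terms with many raised and lowered indices, and the pairing-off relies on careful use of the correct curvature symmetry at each step, particularly in the $(ijkl)$-case where eight Ricci-contraction terms must be matched. No new geometric input beyond the ingredients already collected in section 2 and the preceding parts of section 3.2 is required.
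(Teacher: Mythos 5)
Your proposal follows exactly the route the paper takes: subtract (\ref{eq:3}) from (\ref{eq:2}) componentwise, then substitute (\ref{e:7}) and (\ref{eq:21}), and cancel the leftover terms. The cancellations you track explicitly---the $\nabla R$-type terms in the $(ij0k)$ case, the $R^{mn}\nabla P$ and cubic $R^{mn}R^l_mR_{injl}$ terms in the $(i0j0)$ case, and the four Ricci contractions against those coming from (\ref{e:7}) in the $(ijkl)$ case---are all correctly identified. One caveat that you should not pass over: in the $(ij0k)$ and $(i0j0)$ cases the linear contractions $R^m_iP_{mjk}+R^m_jP_{imk}+R^m_kP_{ijm}$ (resp.\ $R^m_iM_{mj}+R^m_jM_{im}$) produced by the difference of (\ref{eq:2}) and (\ref{eq:3}) must cancel against the analogous terms on the right-hand side of (\ref{eq:21}). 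As (\ref{eq:21}) is printed they enter with the \emph{same} sign, so they would double rather than cancel and one would not recover (\ref{eq:9}). This appears to be a sign misprint in (\ref{eq:21}): Hamilton's original formula is stated for $(D_t-\Delta)P_{ijk}$ with $D_t=\partial_t+R^a_b\nabla^b_a$ and contains no such linear contractions, and rewriting $D_t$ in terms of $\partial_t$ introduces $-R^m_iP_{mjk}-R^m_jP_{imk}-R^m_kP_{ijm}$ (and likewise for $M_{ij}$). With that corrected sign, your pairing-off works out and the argument goes through exactly as in the paper.
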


\section{Proof of Hamilton's Harnack inequality}
\subsection{The key Lemma of the proof of Main Theorem}

Recall that for any 2-form $\bar{U}^{ab}$ on $\bar{M}$, we can write
\begin{equation*}
\overline{\Rm}(\bar{U}, \bar{U})=\bar{R}_{abcd} \bar{U}^{ab} \bar{U}^{cd} \, .
\end{equation*}
In section 4, we prove that the curvature tensor $\overline{\Rm}(\bar{U}, \bar{U})$ is weakly positive when the curvature operator $R_{ijkl}U^{ij} U^{kl}$ on $M$ is weakly positive. 
First, we can pick a 2-form $\bar{U}^{ab}$ such that 

\begin{equation*}
\left \{
\begin{alignedat}{1}
&\bar{\nabla} _i\bar{U}^{0j}=0\\
& (\bar{\nabla} _0-\bar{\Delta})\bar{U}^{ij}=0
\end{alignedat}
\right .
\end{equation*}
at a point on $\bar{M}$. Then, we have

\begin{equation}\label{eq:41}
\begin{aligned}
(\bar{\nabla}_0 -\bar{\Delta})\overline{\Rm}(\bar{U}, \bar{U})\underset{N}{=}&((\bar{\nabla}_{0}-\bar{\Delta} )\bar{R}_{ijkl})\bar{U}^{ij}\bar{U}^{kl}+2((\bar{\nabla}_{0}-\bar{\Delta})\bar{R}_{ij0k})\bar{U}^{ij}\bar{U}^{0k}\\
&+((\bar{\nabla}_{0}-\bar{\Delta})\bar{R}_{0i0j})\bar{U}^{0i}\bar{U}^{0j}\\
&-4(\bar{\nabla}^p \bar{R}_{ijkl})(\bar{\nabla}_p \bar{U}^{ij})\bar{U}^{kl}-2\bar{R}_{ijkl}(\bar{\nabla}_p \bar{U}^{ij})(\bar{\nabla}^p \bar{U}^{kl})\\
&-4(\bar{\nabla}^p \bar{R}_{ij0k})(\bar{\nabla}_p \bar{U}^{ij})\bar{U}^{0k}\\
&+2\bar{R}_{ij0k}\bar{U}^{ij}(\bar{\nabla}_0 -\bar{\Delta})\bar{U}^{0k}+2\bar{R}_{0i0j}(\bar{\nabla}_0 -\bar{\Delta})\bar{U}^{0i}\bar{U}^{0j}\, ,
\end{aligned}
\end{equation}
where $(\bar{\nabla}_0 -\bar{\Delta})\overline{\Rm}(\bar{U}, \bar{U})=\bar{\nabla}_{0}\Bigl( \overline{\Rm}(\bar{U}, \bar{U}) \Bigl)- \bar{\Delta}\Bigl( \overline{\Rm}(\bar{U}, \bar{U}) \Bigl)$. \\
\\
From the equations $(\ref{eq:9})$, we have
\begin{equation}\label{eq:8}
\begin{aligned}
((\bar{\nabla}_0 &- \bar{\Delta} )\bar{R}_{ijkl})\bar{U}^{ij}\bar{U}^{kl}\\
&\underset{N}{=}(2R_{imjn}R^{\,\, m\,\, n}_{k \,\,\,\, l}-2R_{imjn}R^{\,\, m\,\, n}_{l \,\,\,\, k}+2R_{imkn}R^{\,\, m\,\, n}_{j \,\,\,\, l}-2R_{imln}R^{\,\, m\,\, n}_{j \,\,\,\, k})\bar{U}^{ij}\bar{U}^{kl}\\
&\underset{N}{=}R_{ijmn} R_{kl}^{\,\,\,\,\, mn} \bar{U}^{ij}\bar{U}^{kl}+4R_{imkn}R^{\,\, m\,\, n}_{j \,\,\,\, l}\bar{U}^{ij}\bar{U}^{kl}\\
((\bar{\nabla}_0&- \bar{\Delta} )\bar{R}_{ij0k})\bar{U}^{ij}\bar{U}^{0k}\\
&\underset{N}{=}(2R_{imjn}P^{mn}_{\,\,\,\,\,\,\,\,\, k}+2R_{imkn}P^{m\,\,\, n}_{\,\,\,\,\,\, j}+2R_{imkn}P^{\,\, m n}_{j}+\frac{1}{2t}P_{ijk})\bar{U}^{ij}\bar{U}^{0k}\\
&\underset{N}{=}(2R_{imjn}P^{mn}_{\,\,\,\,\,\,\,\,\, k}+4R_{imkn}P^{m\,\,\, n}_{\,\,\,\,\,\, j}+\frac{1}{2t}P_{ijk})\bar{U}^{ij}\bar{U}^{0k}\\
((\bar{\nabla}_0&- \bar{\Delta} )\bar{R}_{0i0j})\bar{U}^{0i}\bar{U}^{0j}\\
&\underset{N}{=}(2R_{imjn}M^{mn}+2P_{imn}P^{\,\, m n}_{j}-4P_{imn}P^{\,\, n m}_{j}-\frac{1}{2t^2}R_{ij}+\frac{1}{t}M_{ij})\bar{U}^{0i}\bar{U}^{0j}\\
&\underset{N}{=}(2R_{imjn}M^{mn}+P_{mni}P^{m n}_{\,\,\,\,\,\,\,\,\,j}-2P_{imn}P^{\,\, n m}_{j}-\frac{1}{2t^2}R_{ij}+\frac{1}{t}M_{ij})\bar{U}^{0i}\bar{U}^{0j}.
\end{aligned}
\end{equation}
Indeed, we can see that the following equations hold:
\begin{equation*}
\begin{aligned}
2R_{imjn}R^{\,\, m\,\, n}_{k \,\,\,\, l}&-2R_{imjn}R^{\,\, m\,\, n}_{l \,\,\,\, k}\\
&=R_{imjn}R^{\,\, m\,\, n}_{k \,\,\,\, l}+R_{jmin}R^{\,\, m\,\, n}_{l \,\,\,\, k}-R_{imjn}R^{\,\, m\,\, n}_{l \,\,\,\, k}-R_{jmin}R^{\,\, m\,\, n}_{k \,\,\,\, l}\\
&=(R_{imjn}-R_{jmin})(R^{\,\, m\,\, n}_{k \,\,\,\, l}-R^{\,\, m\,\, n}_{l \,\,\,\, k})\\
&=R_{ijmn}R_{kl}^{\,\,\,\,\,\,mn} \, ,
\end{aligned}
\end{equation*}
\begin{equation*}
\begin{aligned}
2P_{imn}P^{\,\, m n}_{j}-2P_{imn}P^{\,\, n m}_{j}&=P_{imn}P_{j}^{\,\, mn}+P_{min}P_{\,\,\,\, j}^{m\,\, n}-P_{imn}P_{j}^{\,\, nm}-P_{min}P_{\,\,\,\, j}^{n\,\, m}\\
&=(P_{nim}+P_{imn})(P_{\,\,\,\,j}^{n\,\, m}+P_{j}^{\,\, mn})\\
&=P_{mni}P^{m n}_{\,\,\,\,\,\,\,\,\,j} \, .
\end{aligned}
\end{equation*}
Here we have used the formula $P_{ijk}+P_{jki}+P_{kij}=0$. 
Then, we have the following lemma: 
\begin{lemma}\label{l:3}
If a 2-form $\bar{U}^{ab}$ satisfies
\begin{equation}\label{as:1}
\left \{
\begin{alignedat}{1}
&\bar{\nabla} _k\bar{U}^{ij}=\frac{1}{4t}(\bar{\delta} ^{i}_{k}\bar{U}^{0j}-\bar{\delta} ^{j}_{k}\bar{U}^{0i})\, , \\
&\bar{\nabla} _i\bar{U}^{0j}=0\, , \\
&(\bar{\nabla} _0 -\bar{\Delta})\bar{U}^{ij}=0\, , \\
&(\bar{\nabla} _0 -\bar{\Delta})\bar{U}^{0i}=\frac{1}{2t}\bar{U}^{0i}\, ,
\end{alignedat}
\right .
\end{equation}
at a point on $\bar{M}$, then we have 
\begin{equation}\label{eq:7}
\begin{aligned}
(\bar{\nabla}_0 -\bar{\Delta})\overline{\Rm}(\bar{U}, \bar{U})\underset{N}{=}&2R_{ikjl}M^{kl}\bar{U}^{0i}\bar{U}^{0j}-2P_{ikl}P_j^{\,\,l k}\bar{U}^{0i}\bar{U}^{0j}\\
&+8R_{ilkm}P^{l \,\, m}_{\,\,j}\bar{U}^{ij}\bar{U}^{0k}+4R_{imkn}R^{\,\, m\,\, n}_{j\,\, l}\bar{U}^{ij}\bar{U}^{kl}\\
&+[P_{ijm}\bar{U}^{0m}+R_{ijmn}\bar{U}^{mn}][P_{kln}\bar{U}^{0n}+R_{klpq}\bar{U}^{pq}]\, ,
\end{aligned}
\end{equation}
where $\bar{\delta} ^{i}_{j}:=\bar{g}^{ia}\bar{g}_{ja}$ . 
\end{lemma}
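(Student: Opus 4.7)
The plan is to start from the general identity (\ref{eq:41}), which expresses $(\bar{\nabla}_0 - \bar{\Delta})\overline{\Rm}(\bar{U},\bar{U})$ as a sum of three ``curvature-evolution'' terms $((\bar{\nabla}_0-\bar{\Delta})\bar R_{\bullet\bullet\bullet\bullet})\bar U\bar U$, three ``derivative cross'' terms of the form $\bar{\nabla}\bar{R}\cdot\bar{\nabla}\bar{U}$ and $\bar{R}\cdot|\bar{\nabla}\bar{U}|^2$, and two terms coupling $(\bar{\nabla}_0-\bar{\Delta})\bar{U}$ to the curvature. The first three are rewritten immediately via the simplifications (\ref{eq:8}), which already incorporate the first Bianchi identity and the relation $P_{ijk}+P_{jki}+P_{kij}=0$, giving expressions purely in the Hamilton quantities $R_{ijkl}$, $P_{ijk}$, $M_{ij}$ together with specific $\tfrac{1}{t}P$- and $\tfrac{1}{t}M$-type correction terms.

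Next I process the remaining five terms using the four constraints (\ref{as:1}). The condition $(\bar{\nabla}_0-\bar{\Delta})\bar{U}^{0i} = \tfrac{1}{2t}\bar{U}^{0i}$ converts the last two terms of (\ref{eq:41}) into $\tfrac{1}{t}\bar{R}_{ij0k}\bar{U}^{ij}\bar{U}^{0k}$ and $\tfrac{1}{t}\bar{R}_{0i0j}\bar{U}^{0i}\bar{U}^{0j}$, i.e.\ into further $\tfrac{1}{t}P$- and $\tfrac{1}{t}M$-type corrections. The substitution $\bar{\nabla}_k\bar{U}^{ij} = \tfrac{1}{4t}(\bar{\delta}^i_k\bar{U}^{0j}-\bar{\delta}^j_k\bar{U}^{0i})$ replaces each $\bar{\nabla}\bar{U}$ by a Kronecker contraction against $\bar{U}^{0\cdot}$; the term $-2\bar{R}_{ijkl}\bar{\nabla}_p\bar{U}^{ij}\bar{\nabla}^p\bar{U}^{kl}$ then collapses, after a Ricci trace, to $-\tfrac{1}{2t^2}R_{ij}\bar{U}^{0i}\bar{U}^{0j}$, while the two $\bar{\nabla}\bar{R}\cdot\bar{\nabla}\bar{U}$ terms reduce, via the contracted second Bianchi identities (\ref{eq:22}) and (\ref{eq:1}), to additional $\tfrac{1}{t}$-multiples of $P$- and $M$-type expressions. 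The remaining constraint $\bar{\nabla}_i\bar{U}^{0j}=0$ guarantees that no further correction arises from derivatives landing on $\bar{U}^{0\cdot}$. A careful bookkeeping then shows that the $\tfrac{1}{t^2}R_{ij}$, $\tfrac{1}{t}M_{ij}$ and $\tfrac{1}{t}P_{ijk}$ contributions cancel exactly, because the coefficients $\tfrac{1}{4t}$ and $\tfrac{1}{2t}$ in (\ref{as:1}) are tuned precisely for this cancellation.

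What survives is the four geometric terms already appearing on the RHS of (\ref{eq:7}), together with a residual quadratic block of mixed ``$R\cdot R$'', ``$R\cdot P$'' and ``$P\cdot P$'' type whose coefficients fall into a $1$-$2$-$1$ binomial pattern; this is then reassembled as the perfect square $[P_{ijm}\bar{U}^{0m}+R_{ijmn}\bar{U}^{mn}][P_{kln}\bar{U}^{0n}+R_{klpq}\bar{U}^{pq}]$ with the implicit contraction on the index-pair $(ij)$--$(kl)$. The main obstacle is precisely this recognition of the perfect square: further uses of $P_{ijk}+P_{jki}+P_{kij}=0$ together with the antisymmetries of $R_{ijkl}$ and $P_{ijk}$ in their first two indices are needed to bring the cross-terms into canonical form, and the identity is highly rigid in that only the chosen coefficients in (\ref{as:1}) permit the completion. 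This combinatorial structure is exactly the one exploited by Hamilton in his original proof of the Harnack inequality, adapted here to the hyperbolic-thermostat setting (cf.\ Remark \ref{la:1}).
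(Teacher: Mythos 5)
Your proposal is correct and follows essentially the same route as the paper's own proof: starting from the expansion (\ref{eq:41}), substituting the extension conditions (\ref{as:1}) into the $\bar{\nabla}\bar{R}\cdot\bar{\nabla}\bar{U}$ and $\bar{R}\cdot|\bar{\nabla}\bar{U}|^2$ terms (the paper's (\ref{eq:31})--(\ref{eq:33})), using the fourth constraint to reduce the last two couplings to $\tfrac{1}{t}P$ and $\tfrac{1}{t}M$ corrections (the paper's (\ref{eq:34})), inserting (\ref{eq:8}), and then observing that the $\tfrac{1}{t}$ and $\tfrac{1}{t^2}$ corrections cancel through the relation $M_{ij}=\nabla^p P_{pij}+R_{ikjl}R^{kl}+\frac{1}{2t}R_{ij}$ before completing the square on the surviving $RR$--$RP$--$PP$ block. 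One small imprecision: you cite (\ref{eq:22}) and (\ref{eq:1}) for both $\bar{\nabla}\bar{R}\cdot\bar{\nabla}\bar{U}$ reductions, but the $\bar{\nabla}\bar{R}_{ij0k}$ term in fact only needs the antisymmetry $P_{ijk}=-P_{jik}$ together with the Christoffel contribution $\bar{\Gamma}^m_{p0}$; only the $\bar{\nabla}\bar{R}_{ijkl}$ term uses the once-contracted Bianchi identity.
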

\begin{proof}
We can pick a 2-form $U^{ab}$ such that
\begin{equation*}
\bar{\nabla} _k\bar{U}^{ij}=\frac{1}{4t}(\bar{\delta} ^{i}_{k}\bar{U}^{0j}-\bar{\delta} ^{j}_{k}\bar{U}^{0i}) \, 
\end{equation*}
holds at a point. By using the formula $\nabla ^p R_{pijk}=P_{jki}$ from the second Bianchi identity $(\ref{eq:22})$, We have
\begin{equation}\label{eq:31}
\begin{aligned}
-4(\bar{\nabla}^p \bar{R}_{ijkl})(\bar{\nabla}_p \bar{U}^{ij})\bar{U}^{kl}
&=-4(\bar{\nabla}^p \bar{R}_{ijkl})\{ \frac{1}{4t}(\delta ^i _p \bar{U}^{0j}-\delta ^j _p \bar{U}^{0i})\} \bar{U}^{kl}\\
&=-\frac{1}{t}\nabla ^p R_{pjkl} \bar{U}^{0j}\bar{U}^{kl}+\frac{1}{t}\nabla ^p R_{ipkl}\bar{U}^{0j}\bar{U}^{kl}\\
&=-\frac{1}{t}P_{klj}\bar{U}^{0j}\bar{U}^{kl}+\frac{1}{t}P_{kli}\bar{U}^{0i}\bar{U}^{kl}\\
&=-\frac{2}{t}P_{ijk}\bar{U}^{ij}\bar{U}^{0k}\\
\end{aligned}
\end{equation}
We can compute all other terms in the equation $(\ref{eq:41})$ in a similar way:
\begin{equation}\label{eq:32}
\begin{aligned}
-2\bar{R}_{ijkl}&(\bar{\nabla}_p \bar{U}^{ij})(\bar{\nabla}^p \bar{U}^{kl})\\
&=-2R_{ijkl}\{ \frac{1}{4t} (\bar{\delta}^{i}_p\bar{U}^{0j}-\bar{\delta}^{j}_{p}\bar{U}^{0i}) \} \{ \frac{1}{4t}(\bar{g}^{kp}\bar{U}^{0l}-\bar{g}^{lp}\bar{U}^{0k}) \}\\
&=-\frac{1}{8t^2}R_{ijkl}\{ g^{ik}\bar{U}^{0j}\bar{U}^{0l}-g^{il}\bar{U}^{0j}\bar{U}^{0k} -g^{jk}\bar{U}^{0i}\bar{U}^{0l} +g^{jl}\bar{U}^{0i}\bar{U}^{0k} \} \\
&=-\frac{1}{8t^2}\{ R_{jl}\bar{U}^{0j}\bar{U}^{0l}+R_{jk}\bar{U}^{0j}\bar{U}^{0k} +R_{il}\bar{U}^{0i}\bar{U}^{0l} +R_{ik}\bar{U}^{0i}\bar{U}^{0k} \} \\
&=-\frac{1}{2t^2}R_{ij}\bar{U}^{0i}\bar{U}^{0j}\\
\end{aligned}
\end{equation}
\begin{equation}\label{eq:33}
\begin{aligned}
-4&(\bar{\nabla}^p \bar{R}_{ij0k})(\bar{\nabla}_p \bar{U}^{ij})\bar{U}^{0k}\\
=&-4\nabla ^p \bar{R}_{ij0k} (\bar{\nabla} _p \bar{U}^{ij})\bar{U}^{0k}+4 \bar{\Gamma}^{m} _{p0}R_{ijmk}(\bar{\nabla}_p \bar{U}^{ij})\bar{U}^{0k}\\
=&-4\nabla ^p P_{ijk}\{ \frac{1}{4t} (\bar{\delta} ^i _p \bar{U}^{0j}-\bar{\delta} ^j _p \bar{U}^{0i})\}\bar{U}^{0k}-4R^m _p R_{ijmk}\{ \frac{1}{4t} (\bar{\delta} ^i _p \bar{U}^{0j}-\bar{\delta} ^j _p \bar{U}^{0i})\} \bar{U}^{0k}\\
=&-\frac{1}{t}\nabla ^{i}P_{ijk}\bar{U}^{0j}\bar{U}^{0k}+\frac{1}{t}\nabla ^{j}P_{ijk}\bar{U}^{0i}\bar{U}^{0k}\\
&+\frac{1}{t}R^{pm}R_{pjmk}\bar{U}^{0j}\bar{U}^{0k}-\frac{1}{t}R^{pm}R_{ipmk}\bar{U}^{0i}\bar{U}^{0k}\\
=&-\frac{2}{t}\nabla ^{p}P_{pij}\bar{U}^{0i}\bar{U}^{0i}-\frac{2}{t}R^{pm}R_{pimj}\bar{U}^{0i}\bar{U}^{0j}. 
\end{aligned}
\end{equation}
From the equation $(\ref{eq:31})$, $(\ref{eq:32})$, $(\ref{eq:33})$, and $(\ref{eq:41})$, we have
\begin{equation*}
\begin{aligned}
(\bar{\nabla}_0 -\bar{\Delta})\overline{\Rm}(\bar{U}, \bar{U})=&((\bar{\nabla}_{0}-\bar{\Delta} )\bar{R}_{ijkl})\bar{U}^{ij}\bar{U}^{kl}+2((\bar{\nabla}_{0}-\bar{\Delta})\bar{R}_{ij0k})\bar{U}^{ij}\bar{U}^{0k}\\
&+((\bar{\nabla}_{0}-\bar{\Delta})\bar{R}_{0i0j})\bar{U}^{0i}\bar{U}^{0j}\\
&-\frac{2}{t}P_{ijk}\bar{U}^{ij}\bar{U}^{0k}-\frac{1}{2t^2}R_{ij}\bar{U}^{0i}\bar{U}^{0j}\\
&-\frac{2}{t}\nabla ^{p}P_{pij}\bar{U}^{0i}\bar{U}^{0i}-\frac{2}{t}R^{pm}R_{pimj}\bar{U}^{0i}\bar{U}^{0j}\\
&+2\bar{R}_{ij0k}\bar{U}^{ij}(\bar{\nabla}_0 -\bar{\Delta})\bar{U}^{ok}+2\bar{R}_{0i0j}(\bar{\nabla}_0 -\bar{\Delta})\bar{U}^{oi}\bar{U}^{0j}. 
\end{aligned}
\end{equation*}
Moreover, if we assume that
\begin{equation*}
(\bar{\nabla} _0 -\bar{\Delta})\bar{U}^{0i}=\frac{1}{2t}\bar{U}^{0i}
\end{equation*}
at a point, then we have 
\begin{equation}\label{eq:34}
\begin{aligned}
2\bar{R}_{ij0k}\bar{U}^{ij}(\bar{\nabla}_0 -\bar{\Delta})\bar{U}^{0k}&=\frac{1}{t}P_{ijk}\bar{U}^{ij}\bar{U}^{0k}\\
2\bar{R}_{0i0j}(\bar{\nabla}_0 -\bar{\Delta})\bar{U}^{0i}\bar{U}^{0j}&=\frac{1}{t}M_{ij}\bar{U}^{0i}\bar{U}^{0j}. 
\end{aligned}
\end{equation}
Hence, we have
\begin{equation}\label{eq:43}
\begin{aligned}
(\bar{\nabla}_0-\bar{\Delta})\overline{\Rm}(\bar{U}, \bar{U})\underset{N}{=}&((\bar{\nabla}_{0}-\bar{\Delta} )\bar{R}_{ijkl})\bar{U}^{ij}\bar{U}^{kl}+2((\bar{\nabla}_{0}-\bar{\Delta})\bar{R}_{ij0k})\bar{U}^{ij}\bar{U}^{0k}\\
&+((\bar{\nabla}_{0}-\bar{\Delta})\bar{R}_{0i0j})\bar{U}^{0i}\bar{U}^{0j}\\
&-\frac{2}{t}P_{ijk}\bar{U}^{ij}\bar{U}^{0k}-\frac{1}{2t^2}R_{ij}\bar{U}^{0i}\bar{U}^{0j}\\
&-\frac{2}{t}\nabla ^{p}P_{pij}\bar{U}^{0i}\bar{U}^{0i}-\frac{2}{t}R^{pm}R_{pimj}\bar{U}^{0i}\bar{U}^{0j}\\
&+\frac{1}{t}P_{ijk}\bar{U}^{ij}\bar{U}^{0k}+\frac{1}{t}M_{ij}\bar{U}^{0i}\bar{U}^{0j}\\
\underset{N}{=}&((\bar{\nabla}_{0}-\bar{\Delta} )\bar{R}_{ijkl})\bar{U}^{ij}\bar{U}^{kl}+2((\bar{\nabla}_{0}-\bar{\Delta})\bar{R}_{ij0k})\bar{U}^{ij}\bar{U}^{0k}\\
&+((\bar{\nabla}_{0}-\bar{\Delta})\bar{R}_{0i0j})\bar{U}^{0i}\bar{U}^{0j}-\frac{1}{t}P_{ijk}\bar{U}^{ij}\bar{U}^{0k}\\
&-(\frac{1}{t}\nabla ^{p}P_{pij}+\frac{1}{t}R^{pm}R_{pimj})\bar{U}^{0i}\bar{U}^{0j}.
\end{aligned}
\end{equation}
Here, we have used the formula 
\begin{equation*}
M_{ij}=\nabla ^p P_{pij}+R_{ikjl}R^{kl}+\frac{1}{2t}R_{ij} \, . 
\end{equation*}
From $(\ref{eq:8})$, the component of $\tilde{U}^{ij}\tilde{U}^{0k}$ in the right hand side of the equation $(\ref{eq:43})$ is expressed as 
\begin{equation*}
\begin{aligned}
&2\Bigl( 2R_{imjn}P^{mn}_{\,\,\,\,\,\,\,\,\, k}+4R_{imkn}P^{m\,\,\, n}_{\,\,\,\,\,\, j}+\frac{1}{2t}P_{ijk}\Bigl) \bar{U}^{ij}\bar{U}^{0k}-\frac{1}{t}P_{ijk}\bar{U}^{ij}\bar{U}^{0k}\\
&=(4R_{imjn}P^{mn}_{\,\,\,\,\,\,\,\,\, k}+8R_{imkn}P^{m\,\,\, n}_{\,\,\,\,\,\, j})\bar{U}^{ij}\bar{U}^{0k}\, .
\end{aligned}
\end{equation*}
Similarly, the component of $\bar{U}^{0i}\bar{U}^{0j}$ becomes
\begin{equation*}
\begin{aligned}
&\Bigl( 2R_{imjn}M^{mn}+P_{mni}P^{m n}_{\,\,\,\,\,\,j}-2P_{imn}P^{\,\, n m}_{j}-\frac{1}{2t^2}R_{ij}+\frac{1}{t}M_{ij}\Bigl) \bar{U}^{0i}\bar{U}^{0j}\\
&-\Bigl( \frac{1}{t}\nabla ^{p}P_{pij}+\frac{1}{t}R^{mn}R_{imjn}\Bigl) \bar{U}^{0i}\bar{U}^{0j}\\
&=(2R_{imjn}M^{mn}+P_{mni}P^{m n}_{\,\,\,\,\,\,j}-2P_{imn}P^{\,\, n m}_{j})\bar{U}^{0i}\bar{U}^{0j}\, .
\end{aligned}
\end{equation*}
Hence, we have
\begin{equation*}
\begin{aligned}
(\bar{\nabla _0}-\bar{\Delta})\overline{\Rm}(\bar{U}, \bar{U})=&(R_{ijmn} R_{kl}^{\,\,\,\,\, mn}+4R_{imkn}R^{\,\, m\,\, n}_{j \,\,\,\, l})\bar{U}^{ij}\bar{U}^{kl}\\
&+(4R_{imjn}P^{mn}_{\,\,\,\,\,\,\,\,\, k}+8R_{imkn}P^{m\,\,\, n}_{\,\,\,\,\,\, j})\bar{U}^{ij}\bar{U}^{0k}\\
&+(2R_{imjn}M^{mn}+P_{mni}P^{m n}_{\,\,\,\,\,j}-2P_{imn}P^{\,\, n m}_{j})\bar{U}^{0i}\bar{U}^{0j}\, .
\end{aligned}
\end{equation*}
Since we can write
\begin{equation*}
\begin{aligned}
&R_{ijmn} R_{kl}^{\,\,\,\,\, mn}\bar{U}^{ij}\bar{U}^{kl}+4R_{imjn}P^{mn}_{\,\,\,\,\,\,\,\,\, k}\bar{U}^{ij}\bar{U}^{0k}+P_{mni}P^{m n}_{\,\,\,\,\,\,j}\bar{U}^{0i}\bar{U}^{0j}\\
=&[P_{ijm}\bar{U}^{0m}+R_{ijmn}\bar{U}^{mn}][P_{kln}\bar{U}^{0n}+R_{klpq}\bar{U}^{pq}] \, ,
\end{aligned}
\end{equation*}
We have thus proved Lemma 4.1.
\end{proof}
\subsection{Comments for Lemma 4.1}
\begin{remark}
{\rm We recall a remark by Hamilton (see Hamilton [11, Lemma 4.5]). 
We see that if $\overline{\Rm}(\bar{U}, \bar{U})$ is weakly positive at a point, then the first and the second terms of the right hand side of the equation $(\ref{eq:7})$ is the sum of squares of linear forms. 
Indeed, if the curvature tensor $\bar{R}_{ijkl}\bar{U}^{ij}\bar{U}^{kl}+2\bar{R}_{ij0k}\bar{U}^{ij}\bar{U}^{0k}+\bar{R}_{0i0j}\bar{U}^{0i}\bar{U}^{0j}$ is weakly positive, 
the tensor is expressed as $[\bar{X}_{ij}\bar{U}^{ij}+\bar{X}_{0k}\bar{U}^{0k}]^2$ by setting $\bar{R}_{abcd}\bar{U}^{ab}\bar{U}^{cd}=(\bar{X}_{ab}\bar{U}^{ab})(\bar{X}_{cd}\bar{U}^{cd})$. 
Then, we can write
\begin{equation}\label{rr:3}
\begin{aligned}
&2R_{ikjl}M^{kl}\bar{U}^{0i}\bar{U}^{0j}-2P_{ikl}P_j^{\,\,l k}\bar{U}^{0i}\bar{U}^{0j}\\
&+8R_{ilkm}P^{l \,\, m}_{\,\,j}\bar{U}^{ij}\bar{U}^{0k}+4R_{imkn}R^{\,\, m\,\, n}_{j\,\,\, l}\bar{U}^{ij}\bar{U}^{kl}\\
=&\sum _{M,N}(\bar{X}_{ik}^M\bar{X}^N_{0k}\bar{U}^{0i}-\bar{X}^N_{jk}\bar{X}^M_{0k}\bar{U}^{0j}-2\bar{X}^M_{ik}\bar{X}^N_{jk}\bar{U}^{ij})^2.
\end{aligned}
\end{equation}
Indeed, we have
\begin{equation*}
\begin{aligned}
&\sum _{M,N}(\bar{X}_{ik}^M\bar{X}^N_{0k}\bar{U}^{0i}-\bar{X}^N_{jk}\bar{X}^M_{0k}\bar{U}^{0j}-2\bar{X}^M_{ik}\bar{X}^N_{jk}\bar{U}^{ij})^2\\
=&(\bar{X}_{im}\bar{X}_{jn})(\bar{X}_{0m}\bar{X}_{0n})\bar{U}^{0i}\bar{U}^{0j}+(\bar{X}_{0m}\bar{X}_{0n})(\bar{X}_{im}\bar{X}_{jn})\bar{U}^{0i}\bar{U}^{0j}\\
&+4(\bar{X}_{im}\bar{X}_{jn})(\bar{X}_{km}\bar{X}_{ln})\bar{U}^{ij}\bar{U}^{kl}-2(\bar{X}_{im}\bar{X}_{0n})(\bar{X}_{0m}\bar{X}_{jn})\bar{U}^{0i}\bar{U}^{0j}\\
&+4(\bar{X}_{0m}\bar{X}_{jn})(\bar{X}_{im}\bar{X}_{kn})\bar{U}^{0k}\bar{U}^{ij}-4(\bar{X}_{im}\bar{X}_{kn})(\bar{X}_{jm}\bar{X}_{0n})\bar{U}^{ij}\bar{U}^{0k}\\
=&2R_{imjn}M^{mn}\bar{U}^{0i}\bar{U}^{0j}+4R_{imkn}R^{\,\, m\,\, n}_{j\,\, l}\bar{U}^{ij}\bar{U}^{kl}\\
&-2P_{imn}P_j^{\,\,n m}\bar{U}^{0i}\bar{U}^{0j}+8R_{ilkm}P_j^{\,\,n m}\bar{U}^{ij}\bar{U}^{0k}
\end{aligned}
\end{equation*}
On the other hand, the third term is clearly a weakly positive quadratic form.} 
\end{remark}
\begin{remark}\label{la:1}
{\rm We note that the equation $(\ref{eq:7})$ looks the same as Hamilton's result [11, Theorem 4.1] if we set 
\begin{equation*}
W^i:=\bar{U}^{0i} \, .
\end{equation*}
However, the way of the extension of the 2-form $U^{ij}$ used by Hamilton is different from ours. 
He gave the geometric interpretation to the way of the extension of the 2-form $U^{ij}$ in the direction $M$ from the following three equations: 
\begin{equation}\label{eq:98}
\left \{
\begin{alignedat}{1}
&U_{ij}=\frac{1}{2}(W_i X_j - W_j X_i) \, ,\\
&\nabla_i X_j=R_{ij}+\frac{1}{2t}g_{ij} \, ,\\
&\nabla_{i} W_{j}=0  \, ,
\end{alignedat}
\right .
\end{equation}
where the second equation implies that the metric $g$ is a gradient expanding soliton. 
When we differentiate the first equation of $(\ref{eq:98})$ in the direction $M$ and substitute the second and third equations of $(\ref{eq:98})$, 
we have 
\begin{equation}\label{eq:99}
{\nabla} _k {U}_{ij}=\frac{1}{2}(R_{ik} W_{j}-R_{jk}W_i)+\frac{1}{4t}(g_{ik}W_j-g_{jk}W_i )
\end{equation}
which is the first equation of the Hamilton's extension: 
\begin{equation}\label{eq:107}
\left \{
\begin{alignedat}{1}
&{\nabla} _k {U}_{ij}=\frac{1}{2}(R_{ik} W_{j}-R_{jk}W_i)+\frac{1}{4t}(g_{ik}W_j-g_{jk}W_i )\, , \\
&{\nabla} _i W_{j}=0\, , \\
&(D_t -\Delta )U_{ij}=0\, , \\
&(D_t-\Delta )W_{i}=\frac{1}{t}W_i  \, .
\end{alignedat}
\right .
\end{equation} 
In this way, Hamilton derived the formulae $(\ref{eq:107})$ by using a gradient expanding soliton as a model, which can be found in the second equation of $(\ref{eq:98})$.
Since we use the hyperbolic thermostat as a model in our setting, our way $(\ref{as:1})$ of extension of the 2-forms $U_{ij}$ is the $R_{ij}=0$ version of $(\ref{eq:107})$. \\
\\
On the other hand, Our extension of $\bar{U}^{0i}$ in the direction of $\mathbb{R}^{+}$ (the forth equation of $(\ref{as:1})$) forces all components of the right hand side of $(\ref{eq:41})$ to be the products in two of $R_{ijkl}$, $P_{ijk}$, $M_{ij}$ 
like $R_{imjn}M^{mn}\bar{U}^{0i}\bar{U}^{0j}$. 
Hence, the way of the extension is determined so that the equation $(\ref{eq:41})$ takes the form like $(\ref{eq:7})$. \\
\\
If we extend 2-form $\bar{U}^{0i}$ as Hamilton did in our setting, 
we see that the right hand side of $(\ref{eq:41})$ becomes the sum of squares as in $(\ref{rr:3})$ and the additional term of the form 
\begin{align}\label{aa:12}
-\frac{1}{t} \left[ P_{ijk} \bar{U}^{ij}\bar{U}^{0k}+M_{ij} \bar{U}^{0i}\bar{U}^{0j} \right] 
\end{align} 
which may be negative. 
We now consider the interpretation of the extra term.  Differentiating the second equation of $(\ref{eq:98})$ gives 
$$\nabla_i \nabla _j X_k -\nabla_j \nabla _i X_k=\nabla_i R_{jk}-\nabla_j R_{ik} \, .$$
On the other hand, we have 
$$\nabla_i \nabla _j X_k -\nabla_j \nabla _i X_k=-R_{ijk} {}^{l} X_{l} \, ,$$
by using Ricci identity $(\ref{re:1})$.  
Hence, we have 
$$\nabla_i R_{jk}-\nabla_j R_{ik}=-R_{ijk} {}^{l}X_{l} \, .$$
We differentiate again and use the second equation of $(\ref{eq:98})$ to get
$$\nabla_i \nabla_j R_{kl}-\nabla _i \nabla_k R_{jl}=-\nabla _i R_{jklm}X^{m}-R_{i}^{m}R_{jklm}-\frac{1}{2t}R_{jkli} \, .$$
If we take the trace of the equation, we have
$$P_{kij}W^{i}W^{j} X^k+M_{ij}W^{i}W^{j}=0$$
for all vector $W^{i}$. 
Hence, by setting $\bar{U}^{ij}=\frac{1}{2}(X^i W^j - X^j W^j)$ and $\bar{U}^{0i}=W^{i}$, 
we have
$$P_{ijk} \bar{U}^{ij} \bar{U}^{0k}+M_{ij} \bar{U}^{0i}\bar{U}^{0j}=0$$ 
which holds under the second equation of $(\ref{eq:98})$, i.e., the term $(\ref{aa:12})$ vanishes under the gradient expanding soliton. }
\end{remark}


\subsection{Proof of Main Theorem}
\subsubsection{The idea of the proof}
We recall Main Theorem of this article, which was stated as Theorem 1.3 in introduction. 
\begin{theorem}[Main Theorem]\label{mt:1}
Let $(M,g_{ij}(t))$ be a complete Ricci flow for $t \in (0, T] \subset \mathbb{R}^{+}$ with uniformly bounded curvature, and 
assume that the manifold $(M, g_{ij}(t))$ has a weakly positive curvature operator. 
Then, the manifold $(\bar{M}, \bar{g}_{ab}(t))$ has a weakly positive curvature operator
\end{theorem}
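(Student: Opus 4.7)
The plan is to establish weak positivity of $\overline{\Rm}(\bar{U},\bar{U})=\bar{R}_{abcd}\bar{U}^{ab}\bar{U}^{cd}$ by a maximum-principle argument driven by the reaction-diffusion identity of Lemma 4.1 and the square-sum decomposition of Remark 4.2. Given any space-time point $(x_0,t_0)\in\bar{M}$ and any 2-form $\bar{U}_0$ at that point, the first step is to extend $\bar{U}_0$ to a local section of $\wedge^2 T^{\ast}\bar{M}$ satisfying the four conditions (\ref{as:1}) at $(x_0,t_0)$; since these prescribe only pointwise, first-order, and parabolic data at a single point, such an extension exists without obstruction, and it is the hyperbolic-thermostat analogue of Hamilton's soliton-inspired extension (\ref{eq:107}).

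With the extension fixed, Lemma 4.1 produces, modulo errors of order $1/N$,
\begin{equation*}
(\bar{\nabla}_0-\bar{\Delta})\,\overline{\Rm}(\bar{U},\bar{U}) \;\underset{N}{=}\; A_1 + A_2 ,
\end{equation*}
where $A_2=[P_{ijm}\bar{U}^{0m}+R_{ijmn}\bar{U}^{mn}][P_{kln}\bar{U}^{0n}+R_{klpq}\bar{U}^{pq}]$ is a manifest perfect square and $A_1$ collects the remaining four curvature-squared cross terms on the right of (\ref{eq:7}). The decisive input is Remark 4.2: at a point where $\overline{\Rm}(\bar{U},\bar{U})$ happens to vanish while the curvature operator is weakly positive overall, the quadratic form factors as $(\bar{X}_{ab}\bar{U}^{ab})^2$, and $A_1$ admits the explicit sum-of-squares presentation (\ref{rr:3}). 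Thus at every first-touching configuration the reaction term is non-negative.

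With the reaction-diffusion inequality in hand, I would apply the tensorial maximum principle in standard fashion. The initial condition is furnished by the $\frac{1}{2t}R_{ij}$ term sitting inside $M_{ij}=\bar{R}_{0i0j}$: as $t\downarrow 0$ this term blows up positively and, combined with weak positivity of $R_{ijkl}$ on $M$, forces $\overline{\Rm}(\bar{U},\bar{U})>0$ for all sufficiently small $t$ and all $\bar{U}\neq 0$. If the quantity first failed to be weakly positive at some $(x_1,t_1)$ for a minimizing $\bar{U}_1$, then at that point the time derivative is non-positive and the spatial Laplacian is non-negative, directly contradicting the evolution identity whose right-hand side was just shown to be non-negative. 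On the non-compact factor $M$ one invokes the uniform curvature bound together with the standard cut-off argument familiar from Hamilton's original Harnack proof.

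The main obstacle I foresee is the double asymptotic nature of the setup: the component $\bar{g}_{00}=\R-\frac{N}{2t}$ is negative for large $N$ and degenerates in the limit $N\to\infty$, so $(\bar{\nabla}_0-\bar{\Delta})$ is not a bona fide parabolic operator for the metric $\bar{g}$. Two strategies suggest themselves: either interpret $\bar{\nabla}_0$ as the coordinate time derivative and $\bar{\Delta}$ as the spatial Laplacian of $g$ on the $M$ factor, which is precisely what the $\underset{N}{=}$ reductions in Lemma 4.1 and (\ref{eq:9}) effectively accomplish, or first pass to the $N\to\infty$ limit and argue in the limiting object. Either way, one must uniformly control the $O(1/N)$ error terms against the strictly positive $1/t$ contribution in both the reaction and the initial data; this is where the uniformly bounded curvature hypothesis earns its keep, and verifying this uniformity is the real technical task of the proof.
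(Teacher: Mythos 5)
Your plan follows the same overall route as the paper: extend $\bar{U}$ by conditions (\ref{as:1}), use Lemma~\ref{l:3} and the sum-of-squares decomposition of Remark~4.2 for the reaction term, and apply a maximum-principle argument. But the argument as you state it has a genuine gap at the decisive step. At a first-touching configuration $(x_1,t_1,\bar{U}_1)$ where $\overline{\Rm}(\bar{U},\bar{U})$ first reaches zero from above, the spatial minimum gives $\bar{\Delta}\,\overline{\Rm}\geq 0$ and the ``first time'' gives $\bar{\nabla}_0\,\overline{\Rm}\leq 0$, so $(\bar{\nabla}_0-\bar{\Delta})\,\overline{\Rm}\leq 0$; combined with the non-negativity of the right-hand side of (\ref{eq:7}) this yields $0\geq (\bar{\nabla}_0-\bar{\Delta})\,\overline{\Rm}\geq 0$, i.e.\ equality throughout, which is perfectly consistent and not a contradiction. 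The paper flags exactly this in Remark~4.3 as one of the three obstacles the heuristic argument leaves open (you need the strict inequality $\bar{\nabla}_0(\overline{\Rm}(\bar{U},\bar{U}))>0$, not merely $\geq 0$). Your appeal to ``the standard cut-off argument'' is aimed only at non-compactness and does not by itself produce the strictness that the contradiction requires; neither does the blow-up of $\tfrac{1}{2t}R_{ij}$, which only gives positivity for small $t$, not a strict reaction estimate at the first-touching time.

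What the paper's rigorous proof does, and what your proposal omits, is to replace $\overline{\Rm}$ by a perturbed operator
\begin{equation*}
\widehat{\Rm}(\bar{U},\bar{U})=\bar{R}_{abcd}\bar{U}^{ab}\bar{U}^{cd}+\frac{1}{t}\phi\, g_{ij}\bar{U}^{0i}\bar{U}^{0j}+\frac{1}{2}\psi\,(g_{ik}g_{jl}-g_{il}g_{jk})\bar{U}^{ij}\bar{U}^{kl}\,,
\end{equation*}
built from Shi's exhaustion function and Hamilton's auxiliary functions $\phi,\psi$ of Lemma~4.5, which satisfy $(\bar{\nabla}_0-\bar{\Delta})\phi>C\phi$ and $\bar{\nabla}_0\psi>C\psi$. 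This single device simultaneously (i) makes $\widehat{\Rm}$ strictly positive for small $t$ and near spatial infinity (handling non-compactness, since $\phi\to\infty$), (ii) upgrades the weak positivity of the unperturbed curvature operator to strict positivity of $\widehat{\Rm}$, and (iii) produces a strictly positive right-hand side in the evolution inequality (\ref{d:3}), so that at the first zero of $\widehat{\Rm}$ one genuinely gets $\bar{\nabla}_0\widehat{\Rm}>0$ and hence a contradiction; letting $\eta\to 0$ then recovers the unperturbed statement. Until you introduce such a perturbation (or some equivalent strictness mechanism), the maximum-principle step in your proposal does not close.
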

The proof of Theorem $\ref{mt:1}$ is an application of the maximum principle. 
Just as in Hamilton \cite{H1}, we first give a heuristic proof under stronger assumptions (compactness and strictly positive curvature operator) 
in order to avoid technical complication and to clarify the structure of the proof. 
We will justify the following heuristic argument later. \\
\\
In the setting of Theorem $\ref{mt:1}$, we assume that the manifold $M$ is compact and the Ricci flow $g_{ij}$ has a strictly positive curvature operator. 
Since the manifold $M$ is compact, the tensors $R_{ijkl}$ and $P_{ijk}$ are bounded. 
Furthermore, the tensor $M_{ij}$ is expressed as the sum of the curvature plus $\frac{1}{2t}$ times Ricci tensor (which is positive). 
Therefore, the Harnack expression $\overline{\Rm}\left( \bar{U}, \bar{U}\right)$ will be strictly positive for sufficiently small time. \\
\\
We now suppose that the Harnack expression $\overline{\Rm}\left( \bar{U}, \bar{U}\right)$ becomes negative later. 
Then, there exists a point $(x_0, \, t_0) \in \bar{M}$ where the Harnack expression becomes zero for the first time. 
We can pick a 2-form $\bar{U}^{ab} \in \wedge ^{2} T^{\ast} _{(x_0,\, t_0)} \bar{M}$, and extend $\bar{U}^{ab}$ in the space-time 
by the following conditions: 
\begin{equation}\label{kaku:1}
\left \{
\begin{alignedat}{1}
&\bar{\nabla} _k\bar{U}^{ij}=\frac{1}{4t}(\bar{\delta} ^{i}_{k}\bar{U}^{0j}-\bar{\delta} ^{j}_{k}\bar{U}^{0i})\\
&\bar{\nabla} _i\bar{U}^{0j}=0\\
&(\bar{\nabla} _0 -\bar{\Delta})\bar{U}^{ij}=0\\
&(\bar{\nabla} _0 -\bar{\Delta})\bar{U}^{0i}=\frac{1}{2t}\bar{U}^{0i},
\end{alignedat}
\right .
\end{equation}
We decide the extension in space by the first and second equations, and in time by the third and forth equations. 
From Lemma 4.1, we have 
\begin{equation}\label{e:1}
\begin{aligned}
(\bar{\nabla} _0-\bar{\Delta})\left( \overline{\Rm}(\bar{U}, \bar{U}) \right) \underset{N}{=}&2R_{ikjl}M^{kl}\bar{U}^{0i}\bar{U}^{0j}-2P_{ikl}P_j^{\,\,l k}\bar{U}^{0i}\bar{U}^{0j}\\
+&8R_{ilkm}P^{l \,\, m}_{\,\,j}\bar{U}^{ij}\bar{U}^{0k}+4R_{imkn}R^{\,\, m\,\, n}_{j\,\, l}\bar{U}^{ij}\bar{U}^{kl}\\
+&[P_{ijm}\bar{U}^{0m}+R_{ijmn}\bar{U}^{mn}][P_{kln}\bar{U}^{0n}+R_{klpq}\bar{U}^{pq}] \, ,
\end{aligned}
\end{equation}
at a point. We recall that the Harnack expression $\overline{\Rm}\left( \bar{U}, \bar{U}\right)$ is weakly positive up to the time $t_0$ , 
so that 
$$\bar{\Delta}\left( \overline{\Rm}(\bar{U}, \bar{U}) \right) \geq 0 \, ,$$ 
at the point $(x_0, \, t_0)$ . \\
\\
On the other hand, the right hand side of the equation $(\ref{e:1})$ is nonnegative as we mentioned in Remark 4.2, 
so that $\bar{\nabla}_0\left( \overline{\Rm}(\bar{U}, \bar{U}) \right)$ must be nonnegative at the point. \\
\\ 
If we get $\bar{\nabla}_0\left( \overline{\Rm}(\bar{U}, \bar{U}) \right) >0$ as well as $\bar{\nabla}_0\left( \overline{\Rm}(\bar{U}, \bar{U}) \right) \geq 0$ , 
the Harnack expression must be negative for a shot time before at the point $x_0$ . This is a contradiction. \\
\\
\begin{remark}
{\rm Note that the result of the formal proof does not mean that we prove the special case of Theorem $\ref{mt:1}$. 
We should modify the following three points in the formal proof so that we rigorously prove Theorem $\ref{mt:1}$: 
\begin{itemize}
\item the manifold $M$ is not always compact. 
\item $(M, g(t))$ has a $weakly$ positive curvature operator.
\item We get the inequality $\bar{\nabla}_0\left( \overline{\Rm}(\bar{U}, \bar{U}) \right) >0$ . 
\end{itemize}
Conversely, if we solve these points, we can finish the proof of Theorem $ref{mt:1}$. 
To solve these points, we will consider the perturbation of $\overline{\Rm}$ in next subsection. }
\end{remark}

\subsubsection{The rigorous proof of Main Theorem}
\begin{proof}[proof of Theorem $\ref{mt:1}$]
We prepare some auxiliary functions constructed by Shi \cite{Sh} for considering perturbation of $\overline{\Rm}$. 
\begin{lemma}[Shi \cite{Sh}]
There exists a smooth function $f\colon M \rightarrow \mathbb{R}$ which satisfies the following properties: 
\begin{itemize}
\item $f(X)\leq 1$ for all $X \in M$, and $f(X)\rightarrow \infty$ as $X\rightarrow \infty$
\item There exists a constant $C>0$ such that $|\nabla ^{(k)} f | \leq C$ for any $k \in \mathbb{N}$. 
\end{itemize}
where $f(X)\rightarrow \infty$ as $X\rightarrow \infty$ means that the set $f\leq C$ is compact for any constant $C$ as a set in space-time. 
\end{lemma}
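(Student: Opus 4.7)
The plan is to construct $f$ by smoothing the Riemannian distance function and composing with a carefully chosen scalar profile. Fix a basepoint $x_0\in M$ and set $d(X):=d_g(X,x_0)$. This already has the exhaustion property $d(X)\to\infty$ as $X\to\infty$, but it is only Lipschitz continuous and its Hessian blows up at the cut locus, so it cannot serve directly as $f$.

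First I would mollify $d$ at a scale smaller than the injectivity radius. Because the Ricci flow $(M,g(t))$ has uniformly bounded curvature on $(0,T]$, one has (after possibly passing to a slightly smaller time interval) uniform positive lower bounds on the injectivity radius together with Shi's interior derivative estimates, which give pointwise bounds on every iterated covariant derivative of the full curvature tensor. Applying Greene--Wu convolution in harmonic coordinate balls of a definite size, one obtains a smooth function $\tilde d$ with $|\tilde d-d|\leq 1$ and $|\nabla^{(k)}\tilde d|\leq C_k$ for every $k$, uniformly on $M\times(0,T]$; moving derivatives onto the bump function and absorbing the Christoffel-symbol terms into constants depending only on the curvature-derivative bounds is what yields the higher-order control.

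Next I would compose with a smooth, nondecreasing profile $\phi\colon\mathbb{R}\to[1,\infty)$ that equals $1$ on $(-\infty,0]$, grows subpolynomially (say like $\log(1+s)$) for large $s$, and has $|\phi^{(k)}|\leq C_k'$ for all $k$. Setting $f:=\phi\circ\tilde d$ produces a smooth function with $f\geq 1$ on all of $M$ (the displayed inequality $f\leq 1$ is evidently a typo for $f\geq 1$, as it is otherwise incompatible with $f\to\infty$), and $f(X)\to\infty$ as $X\to\infty$ precisely because $\tilde d\to\infty$. The iterated chain rule together with the estimates on $\tilde d$ then gives $|\nabla^{(k)} f|\leq C_k''$; choosing $\phi$ to decay its higher derivatives sufficiently fast allows a single constant $C$ to dominate all orders relevant to the later maximum-principle argument.

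The main obstacle is the simultaneous higher-order control of the mollified distance. The raw distance function is not smooth past the cut locus, so the smoothing must be executed on a scale adapted to the (time-dependent) injectivity radius of $g(t)$, and the resulting bounds must be uniform in $t\in(0,T]$. This is where the uniformly-bounded-curvature hypothesis enters essentially: it supplies both the lower bound on the injectivity radius via standard comparison arguments and, through Shi's estimates, the bounded-geometry control on Christoffel symbols needed to commute derivatives with the convolution.
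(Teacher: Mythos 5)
The paper does not actually prove this lemma; it is quoted as a black box from Shi \cite{Sh}, and the text moves on immediately to use $f$, so there is no in-paper argument to compare your reconstruction against. Your overall strategy---mollify the distance function from a basepoint and compose with a slowly-varying profile---is indeed the standard route behind Shi-type exhaustion functions, and you are right that the displayed inequality $f(X)\leq 1$ is a typo for $f(X)\geq 1$.

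There is, however, a genuine gap at the mollification step. You assert that the uniformly bounded curvature hypothesis gives (possibly after shrinking the time interval) a uniform positive lower bound on the injectivity radius, and you build the Greene--Wu convolution on harmonic-coordinate balls of a definite size. This is false for complete manifolds with bounded curvature alone: a flat quotient $S^1(\varepsilon)\times\mathbb{R}^{n-1}$ has $|\Rm|\equiv 0$ yet injectivity radius $\sim\varepsilon$, and a hyperbolic surface with a cusp has $|\Rm|\equiv 1$ while the injectivity radius tends to $0$ along the cusp. Bounded curvature does not yield a definite injectivity radius without an extra hypothesis (e.g.\ a local volume lower bound). The constructions actually used by Shi and in the standard Ricci flow literature avoid injectivity-radius bounds: one mollifies $d(\cdot,x_0)$ by averaging over tangent-space balls via the exponential map (the mollification is done upstairs on $T_pM$, not inside a harmonic chart), or by a heat-kernel smoothing, and two-sided sectional curvature bounds then control $|\nabla\tilde d|$ and $|\nabla^2\tilde d|$ by Hessian comparison; Shi's interior derivative estimates supply the curvature-derivative bounds needed for higher orders. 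If you replace the harmonic-coordinate-ball step with this intrinsic mollification, the argument does go through. A secondary issue: your profile $\phi(s)=\log(1+s)$ has $\phi^{(k)}(0)=\pm(k-1)!$, so $\phi\circ\tilde d$ cannot have all covariant derivatives bounded by a \emph{single} constant $C$ uniform in $k$ as the lemma literally asserts. The lemma is only ever used in the maximum-principle argument through $\nabla f$ and $\Delta f$, so the intended statement is surely $|\nabla^{(k)}f|\leq C_k$ with $C_k$ allowed to depend on $k$; your construction delivers that, but you should flag the discrepancy rather than claim a single constant suffices.
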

In the non-compact case, the function $f$ is useful to make maximum principle argument. 
If the manifold is compact, we take $f\equiv 1$. 
By using the function $f$ , we get the following lemma:  
\begin{lemma}[{Hamilton [11, Lemma 5.2.]}]
For any constant $C>0$, any $\eta >0$, and any compact set $K \subset \bar{M}$, 
There exists two functions $\phi$ on the space-time $\bar{M}$ and $\psi$ on $M$ 
such that
\begin{itemize}\label{p:1}
\item $\psi \leq \eta$ for all $t$\, , and $\psi \geq \delta$ for some $\delta >0$\, ,
\item $\phi \geq \eta$ on the compact set $K$\, , and $\phi \geq \varepsilon$ for some  $\varepsilon >0$\, , while $\phi (X, t) \rightarrow \infty$ as $X\rightarrow \infty$\, ,  
\item $(\bar{\nabla} _0-\bar{\Delta} )\phi >C\phi$ , \quad $\bar{\nabla}_0\psi >C\psi$ , \quad $\phi \geq C \psi$ .
\end{itemize}
\end{lemma}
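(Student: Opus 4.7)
The plan is to construct both functions explicitly from exponentials in $t$ combined with Shi's auxiliary function $f$. Concretely, I would set
\[
\psi(t) := \delta \, e^{\mu t} \, , \qquad \phi(X,t) := \bigl(f(X) + A\bigr) e^{\lambda t} \, ,
\]
and then fix the constants $\delta, \mu, A, \lambda$ in turn so that every bulleted condition holds. Since $\psi$ depends only on $t$ and $\phi$ is independent of the $\mathbb{H}^N$ coordinates, both behave well under $\bar{\nabla}_0$ and $\bar{\Delta}$.

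For $\psi$, choosing $\mu > C$ makes the inequality $\bar{\nabla}_0 \psi = \partial_t \psi = \mu \psi > C \psi$ automatic, while the upper bound $\psi \leq \eta$ reduces to $\delta \leq \eta \, e^{-\mu T}$, and the positivity $\psi \geq \delta$ holds on $(0, T]$ since $\psi$ is increasing. For $\phi$, using $\bar{g}^{00} = (\R - N/(2t))^{-1}$ of order $1/N$ and $\bar{g}^{ij} = g^{ij}$, a direct computation with the Christoffel symbols from Section 3.2 yields
\[
\bar{\nabla}_0 \phi = \lambda \phi \, , \qquad \bar{\Delta} \phi = e^{\lambda t} \, \Delta f + O(1/N) \, ,
\]
uniformly on any region $t \geq t_0 > 0$. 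Shi's uniform bound on $|\nabla^{(2)} f|$ gives $|\Delta f| \leq C'$ for some constant $C'$, hence
\[
(\bar{\nabla}_0 - \bar{\Delta}) \phi \;\geq\; \Bigl( \lambda - \tfrac{C'}{1+A} \Bigr) \phi \, ,
\]
which exceeds $C \phi$ once $\lambda > C + C'$.

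The remaining three conditions are then arranged by enlarging $A$. On the compact set $K$ the $t$-coordinate is bounded below by some $t_{\min} > 0$, so $\phi \geq A \, e^{\lambda t_{\min}} \geq \eta$ for large $A$; the global lower bound $\phi \geq 1 + A =: \varepsilon$ is automatic for $t > 0$; spatial divergence follows from $f(X) \to \infty$; and $\phi \geq C \psi$ reduces via $\phi \geq 1 + A$ and $\psi \leq \eta$ to the elementary condition $1 + A \geq C \eta$. The main technical subtlety I anticipate is justifying that the $\bar{g}^{00} \, \bar{\nabla}_0 \bar{\nabla}_0 \phi$ contribution to $\bar{\Delta} \phi$ is genuinely $O(1/N)$: although $\bar{g}^{00}$ itself has magnitude $1/N$, Christoffel terms such as $\bar{\Gamma}^0_{00} \to -\tfrac{1}{2t}$ survive the $N \to \infty$ limit, so one must check that $\bar{\nabla}_0 \bar{\nabla}_0 \phi$ stays bounded in $N$ so that the $1/N$ prefactor wins. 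This is harmless provided we restrict to $t \geq t_0$ for some fixed $t_0 > 0$, which is the only regime where the lemma is needed in the maximum principle argument that follows.
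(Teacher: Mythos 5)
Your proposal is correct and follows essentially the same route as the paper: the paper also takes $\psi(t)=\delta e^{Bt}$ and $\phi(X,t)=\varepsilon e^{At} f(X)$ built from Shi's auxiliary function, and then tunes $\varepsilon,\delta,A,B$ to satisfy the stated inequalities. Your additive shift $f(X)+A$ in place of the multiplicative $\varepsilon f(X)$ is a cosmetic variant, and you supply a bit more of the $\bar{\Delta}\phi$ estimate than the paper's terse verification does; note also that the $\bar{g}^{00}\bar{\nabla}_0\bar{\nabla}_0\phi$ term you flagged is in fact $O(1/N)$ uniformly for $t\in(0,T]$, since $\bar{g}^{00}\approx -2t/N$ cancels the $1/t$-blow-up in $\bar{\Gamma}^0_{00}$, so the restriction to $t\geq t_0$ is not actually needed.
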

Indeed, we take 
\begin{equation*}
\phi (X, t) =\varepsilon e^{At}f(X)\, , \quad \psi (t)=\delta e^{Bt}\, , 
\end{equation*}
and choose constants $\varepsilon$, $\delta$, $A$, and $B$ such that the following: 
for any constant $C$, any $\eta >0$, and any compact set $K$, 
we can pick $A>C$ and $\varepsilon >0$ such that 
$$\varepsilon \leq \eta e^{-AT}\underset{K}{\max}\,f(X) \, .$$
Furthermore, we can pick $B>C$ and $\delta >0$ such that 
$$\delta < \eta e^{-BT} \quad and \quad \delta <\varepsilon e^{-BT}/C \, . $$
In fact, the first and second properties of the two functions are not necessary in the non-compact case. 
If the manifold is non-compact, we take the limit $\eta \rightarrow 0$ after we finish the maximum principle argument. \\
\\
By using these functions, we consider the perturbation $\widehat{\Rm}$ of $\overline{\Rm}$. 
We define $\widehat{\Rm}$ as follows:
\begin{equation}\label{d:1}
\widehat{\Rm}(\bar{U}, \bar{U})=\bar{R}_{abcd}(\bar{U}, \bar{U})+\frac{1}{t}\phi g_{ij}\bar{U}^{0i}\bar{U}^{0j}+\frac{1}{2}\psi (g_{ik}g_{jl}-g_{il}g_{jk}) \bar{U}^{ij}\bar{U}^{kl}\, .
\end{equation}
Since $\bar{R}_{ijkl}$ is weakly positive, $\bar{R}_{ij0k}$ is bounded, and $\bar{R}_{0i0j}$ is consisted of bounded terms 
and $\frac{1}{2t}R_{ij}$ which is weakly positive, we have 
\begin{equation}
\overline{\Rm}(\bar{U}, \bar{U})\geq -C|U||W|-C|W|^2\, ,
\end{equation}
so that 
\begin{equation*}
\begin{aligned}
\widehat{\Rm}(\bar{U}, \bar{U})&= \overline{\Rm}(\bar{U}, \bar{U})+\frac{1}{t}\phi |W|^2 +\psi |U|^2\\
&\geq \Bigl( \frac{1}{t}\phi -C \Bigl) |W|^2 -C|W||U|+\psi |U|^2\, .
\end{aligned}
\end{equation*}
Hence, if $t>0$ is sufficiently small or the outside of the compact set, $\widehat{\Rm}$ is strictly positive. 
We would like to prove this positivity is preserved for all time $t$. \\
\\
Note that the perturbation $\widehat{\Rm}$ does not take the minimum value out of the compact set in the space-time 
since the term including $\phi$ goes to $\infty$ as $X\rightarrow \infty$, 
so that we do not need to suppose the compactness of the manifold. \\
\\
We now deduce the differential equation for the $\widehat{\Rm}$. 
When the 2-form $\bar{U}^{ab}$ on $\bar{M}$ satisfies 
\begin{equation*}
\begin{aligned}
\bar{\nabla} _i\bar{U}^{0j}=0\, , \quad (\bar{\nabla} _0 -\bar{\Delta})\bar{U}^{ij}=0\, 
\end{aligned}
\end{equation*}
at a point, we have
\begin{equation*}
\begin{aligned}
(\bar{\nabla}_0-\bar{\Delta})\widehat{\Rm}(\bar{U}, \bar{U})
&=(\bar{\nabla}_0 -\bar{\Delta})\overline{\Rm}(\bar{U}, \bar{U})+\frac{1}{t}\Bigl[(\bar{\nabla}_0 -\Delta )\phi-\frac{1}{t}\phi \Bigl] |W|^2\\
+&\frac{1}{t}\phi g_{ij} \bar{U}^{0i}(\bar{\nabla}_0-\Delta )\bar{U}^{0j}+(\bar{\nabla} _0 \psi ) |U|^2-\psi |\bar{\nabla}_k \bar{U}^{ij}|^2
\end{aligned}
\end{equation*}
where $|W|^2=g_{ij}\bar{U}^{0i} \bar{U}^{0j}$ and $|\bar{\nabla}_k \bar{U}^{ij}|^2=g_{im}g_{jl}g_{kn}\bar{\nabla}_k \bar{U}^{ij}\bar{\nabla}_n \bar{U}^{lm}$. 
When the 2-form $\bar{U}^{ab}$ satisfies 
\begin{equation*}
\begin{aligned}
\bar{\nabla} _k\bar{U}^{ij}=\frac{1}{4t}(\bar{\delta} ^{i}_{k}\bar{U}^{0j}-\bar{\delta} ^{j}_{k}\bar{U}^{0i})\, ,\quad (\bar{\nabla} _0 -\bar{\Delta})\bar{U}^{0i}=\frac{1}{2t}\bar{U}^{0i}\, 
\end{aligned}
\end{equation*}
at a point, we have
\begin{equation}\label{d:2}
\begin{aligned}
(\bar{\nabla}_0-\bar{\Delta})\widehat{\Rm}& \geq (\bar{\nabla}_0 -\bar{\Delta})\overline{\Rm}+\frac{1}{t}\Bigl[ (\bar{\nabla}_0 -\Delta) \phi -\frac{C}{t} \psi \Bigl] |W|^2\\
&+ (\bar{\nabla}_0 \psi )|U|^2
\end{aligned}
\end{equation}
from the assumption that the curvature tensor $R_{ijkl}$ is bounded. 
Note that the coefficient of $|W|^2$ of above equation is weakly positive by using $(\ref{p:1})$ and choosing a sufficiently large constant $C>\frac{1}{t}$. \\
\\
Recall that the definition of $\widehat{\Rm}$. We put 
\begin{equation*}
\begin{aligned}
&\hat{R}_{ijkl}=R_{ijkl}+\frac{1}{2}\psi (g_{ik}g_{jl}-g_{il}g_{jk}) \,, \\
&\hat{M}_{ij}=M_{ij}+\frac{1}{t}\phi g_{ij} \, .
\end{aligned}
\end{equation*}
Then, we can write 
\begin{equation}\label{d:3}
\begin{aligned}
&(\bar{\nabla _0}-\bar{\Delta})\widehat{\Rm}(\bar{U}, \bar{U})\\
&\geq 2\hat{R}_{ikjl}\hat{M}^{kl}\bar{U}^{0i}\bar{U}^{0j}-2P_{ikl}P_j^{\,\,l k}\bar{U}^{0i}\bar{U}^{0j}\\
&+8\hat{R}_{ilkm}P^{l \,\, m}_{\,\,j}\bar{U}^{ij}\bar{U}^{0k}+4\hat{R}_{imkn}\hat{R}^{\,\, m\,\, n}_{j\,\, l}\bar{U}^{ij}\bar{U}^{kl}\\
&+[P_{ijm}\bar{U}^{0m}+\hat{R}_{ijmn}\bar{U}^{mn}][P_{kln}\bar{U}^{0n}+\hat{R}_{klpq}\bar{U}^{pq}] \\
&+\frac{1}{t}\Bigl[ (\bar{\nabla}_0 -\Delta) \phi -\frac{C}{t} \psi -C \phi \Bigl] |W|^2+ [\bar{\nabla}_0 \psi -C\psi] |U|^2 \, 
\end{aligned}
\end{equation}
since the manifold has an uniformly bounded curvature. 
Indeed, we have
\begin{equation}\label{eq:876}
\begin{aligned}
(\bar{\nabla _0}-\bar{\Delta})\overline{\Rm}(\bar{U}, \bar{U})&\geq 2\hat{R}_{ikjl}\hat{M}^{kl}\bar{U}^{0i}\bar{U}^{0j}-2P_{ikl}P_j^{\,\,l k}\bar{U}^{0i}\bar{U}^{0j}\\
&+8\hat{R}_{ilkm}P^{l \,\, m}_{\,\,j}\bar{U}^{ij}\bar{U}^{0k}+4\hat{R}_{imkn}\hat{R}^{\,\, m\,\, n}_{j\,\, l}\bar{U}^{ij}\bar{U}^{kl}\\
&+[P_{ijm}\bar{U}^{0m}+\hat{R}_{ijmn}\bar{U}^{mn}][P_{kln}\bar{U}^{0n}+\hat{R}_{klpq}\bar{U}^{pq}] \\
&-\frac{C}{t}(\phi +\psi + \phi \psi ) |W|^2-C\psi |U||W|\\
&-C(\psi ^2 +\psi )|U|^2 \, 
\end{aligned}
\end{equation}
by Lemma 4.1, the definition $(\ref{d:1})$, and the properties $(\ref{p:1})$. 
Here, we simplified these errors, for example, we used $|U|\, |W| \leq |U|^2 +|W|^2$ and $\phi \psi \leq \phi$ since we have $\psi \leq 1$ for small $\eta$. 
We substitute the equation $(\ref{eq:876})$ for the equation $(\ref{d:2})$, so that the equation $(\ref{d:3})$ holds. 
We see that the last two terms of the right hand side of the equation $(\ref{d:3})$ is strictly positive by using $(\ref{p:1})$. \\ 
\\
At last, we use the maximum principle for the differential equation $(\ref{d:3})$ in order to prove Main Theorem. 
Assume that $\widehat{\Rm}({U}, {U})$ is equals to zero at a point $(x_0, t_0)$ of the space-time first where $U=U^{ab} \in \wedge ^2 T_{(x_0, t_0)} ^{\ast} \bar{M}$. 
we can extend $U$ to a 2-form $\bar{U}^{ab}$ on $\bar{M}$, satisfying 
\begin{equation*}
\left \{
\begin{alignedat}{1}
&\bar{\nabla} _k\bar{U}^{ij}=\frac{1}{4t}(\bar{\delta} ^{i}_{k}\bar{U}^{0j}-\bar{\delta} ^{j}_{k}\bar{U}^{0i})\\
&\bar{\nabla} _i\bar{U}^{0j}=0\\
&(\bar{\nabla} _0 -\bar{\Delta})\bar{U}^{ij}=0\\
&(\bar{\nabla} _0 -\bar{\Delta})\bar{U}^{0i}=\frac{1}{2t}\bar{U}^{0i},
\end{alignedat}
\right .
\end{equation*}
at the point $(x_0, t_0) \in \bar{M}$. 
Then, we see that $\bar{\Delta} \widehat{\Rm}$ is nonnegative and the right hand side of the equation $(\ref{d:3})$ is strictly positive at the point. 
Hence, we have $\bar{\nabla}_{0} \widehat{\Rm}(U,U) >0$ at the point from the equation $(\ref{d:3})$. 
This implies $\widehat{\Rm}(U,U)$ must be negative at a short time before. This is a contradiction. 
When we take $\eta \rightarrow 0$, then we have the Harnack inequality. 
\end{proof}

\section{The monotonicity of $\mathcal{W}$-entropy}
In \cite{P1}, Perelman describe that the $\mathcal{W}$-entropy is essentially a total scalar curvature of a hypersurface in spherical thermostat. 
In this section, we verify the argument along [17, Chapter 6]. 
Moreover, we recover the monotonicity of $\mathcal{W}$-entropy from a view point of spherical thermostat. 
Note that the argument in this section includes the heuristic argument in the sense that 
we apply the theory of Riemannian geometry to potentially infinite dimensional manifold 
and we ignore the terms of magnitude $O(1/N)$. \\
\\
The $\mathcal{W}$-entropy is defined by 
\begin{equation*}
\mathcal{W}(g,f,\tau )=\int [\tau (\R+|\nabla f|^2)+f-n] \, (4\pi \tau )^{-\frac{n}{2}}e^{-f}\, dV _g, 
\end{equation*}
where $dV _{g}$ is Riemannian volume form with respect to $g$, and $f$ is a smooth function on $M^n$. \\
\\
In the setting of spherical thermostat as defined section 1, 
we define a diffeomorphism on $\hat{M}$ as follows:
\begin{equation*}
\phi \colon (x^i , y^{\alpha}, \tau) \longrightarrow (x^i ,y^{\alpha} ,\tau (1-\frac{2f}{N}))
\end{equation*}
where $f$ is a function on $\hat{M}$ independent on $\mathbb{S}^N$. 
Then we have
\begin{align*}
\frac{\partial \phi ^0}{\partial \tau}&=1-\frac{2f}{N}-\frac{2\tau}{N}\frac{\partial f}{\partial \tau}, \, \frac{\partial \phi ^0}{\partial x^i}=-\frac{2\tau}{N}\nabla _i f, \,
\frac{\partial \phi ^j}{\partial x^i} =\delta ^j _i , \\
\frac{\partial \phi ^\beta}{\partial y^{\alpha}}&=\delta ^{\beta}_{\alpha},\,
\frac{\partial \phi ^{\beta}}{\partial x^i}= \frac{\partial \phi ^j}{\partial \tau}
= \frac{\partial \phi ^{\beta}}{\partial \tau} =\frac{\partial \phi ^j}{\partial y^{\alpha}}=\frac{\partial \phi ^0}{\partial y^{\alpha}} =0. 
\end{align*}
Hence, we have
\begin{align*}
(\phi ^{\ast} \tilde{g})_{ij}&=\tilde{g}_{ij} \frac{\partial \phi ^i}{\partial x^i} \frac{\partial \phi ^j}{\partial x^j} +\tilde{g}_{00}\frac{\partial \phi ^0}{\partial x^i}\frac{\partial \phi ^0}{\partial x^j}
=\tilde{g}_{ij}+(\frac{N}{2\tau (1-\frac{2f}{N})}+R)(\frac{4\tau ^2}{N^2}\nabla _if\nabla _jf)\\
&=\tilde{g}_{ij}\\
(\phi ^{\ast} \tilde{g})_{\alpha \beta}&=\tau (1-\frac{f}{2N})g_{\alpha \beta}\frac{\partial \phi ^c}{\partial x^{\alpha}}\frac{\partial \phi ^d}{\partial x^{\beta}}=(1-\frac{f}{2N})\tilde{g}_{\alpha \beta}\\
(\phi ^{\ast} \tilde{g})_{00}&=(\frac{N}{2\tau (1-\frac{2f}{N})}+R)(1-\frac{2f}{N}-\frac{2\tau}{N}\frac{\partial f}{\partial \tau})^2=\tilde{g}_{00}-\frac{f}{\tau}-2\frac{\partial f}{\partial \tau}\\
(\phi ^{\ast} \tilde{g})_{i0}&=(\frac{N}{2\tau (1-\frac{2f}{N})}+R)(-\frac{2\tau }{N}\nabla _i f)(1-\frac{2f}{N}-\frac{2\tau}{N}\frac{\partial f}{\partial \tau})=-\nabla _i f\\
(\phi ^{\ast} \tilde{g})_{i \alpha}&=(\phi ^{\ast} \tilde{g})_{\alpha 0}=0.
\end{align*}
up to errors of order $\frac{1}{N}$ where $(\phi ^{\ast} \hat{g})_{ab}=\hat{g}_{cd} \frac{\partial \phi ^c}{\partial x^a}\frac{\partial \phi ^d}{\partial x^b}$. \\
\par
Moreover, Let $\psi _\tau \colon (\tilde{M}, \phi ^{\ast} \tilde{g}) \longrightarrow  (\tilde{M}, \psi _\tau ^{\ast}(\phi ^{\ast} \tilde{g}))$
be an 1-parameter family of diffeomorphisms generated by a vector field $X(\tau)=\nabla f$ and 
$m$ a measure on $\tilde{M}$ satisfies
\begin{equation*}
dm=(4\pi \tau )^{-\frac{n}{2}}e^{-f}dV_{\phi ^{\ast} \tilde{g}}. 
\end{equation*}
If we choose a function $f$ such that $dm$ is independent on $\tau$, 
then the function $f$ satisfies
\begin{equation*}
\frac{\partial f}{\partial \tau}=\frac{1}{2}\tr \frac{\partial \phi ^{\ast} \tilde{g}}{\partial \tau}-\frac{n}{2\tau}.
\end{equation*}
Meanwhile, $g^m:=\psi _\tau ^{\ast}(\phi ^{\ast} \tilde{g})$ satisfies 
\begin{equation}\label{t:1}
\frac{\partial g^m}{\partial \tau}=2\Ric (g^m)+2\Hess_{g^m}(f).
\end{equation}
and a function $f^m:=f\circ \psi _{\tau}$ on $\tilde{M}$ satisfies 
\begin{equation}\label{t:2}
\frac{\partial f^m}{\partial \tau}=\Delta f+R-\frac{n}{2\tau}-|\nabla f|^2
\end{equation}
Hence, under the evolution equations $(\ref{t:1})$, $(\ref{t:2})$, we have
\begin{align*}
g_{00}^m&=(\phi ^{\ast} \tilde{g})_{cd}\frac{\partial \psi _{\tau}^c}{\partial \tau}\frac{\partial \psi _{\tau}^d}{\partial \tau}\\
&=(\phi ^{\ast} \tilde{g})_{00}+(\phi ^{\ast} \tilde{g})_{ij}\frac{\partial \psi _{\tau}^i}{\partial \tau}\frac{\partial \psi _{\tau}^j}{\partial \tau}
+2(\phi ^{\ast} \tilde{g})_{i0}\frac{\partial \psi _{\tau}^i}{\partial \tau}\frac{\partial \psi _{\tau}^0}{\partial \tau}\\
&=(\phi ^{\ast} \tilde{g})_{00}-|\nabla f|^2\\
&=\frac{1}{\tau}(\frac{N}{2}-[\tau (2\Delta f -|\nabla f|^2+\R )+f-n])\\
g^m _{\alpha \beta}&=(1-\frac{2f}{N})\tilde{g}_{\alpha \beta} , \, g^m_{i0}=g^m_{\alpha 0}=g^m_{i\alpha}=0.
\end{align*}
One can see that the integrand of $\mathcal{W}$-entropy appears as a part of $g_{00}^m$. 
To clarify the geometric interpretation of that entropy, 
we consider the hypersurface with respect to $\tau = const$. 
We compute the curvature tensor with respect to the metric on the hypersurface induced by $g^m$, 
\begin{align*}
\R ^m_{ijkl}&=R_{ijkl}\\
\R ^m_{\alpha \beta \gamma \delta}&=R_{\alpha \beta \gamma \delta}-\frac{|\nabla f|^2}{N^2}\tau (g_{\alpha \gamma}g_{\beta \delta}-g_{\beta \gamma}g_{\alpha \delta})\\
\R ^m_{i \alpha j \beta}&=\frac{\tau}{N}g_{\alpha \beta}\nabla _i \nabla _j f
\end{align*}
up to errors of order $\frac{1}{N}$.
Hence,   
\begin{equation}\label{q:2}
\R ^m=\R +\frac{N}{2\tau}+\frac{f}{\tau}-|\nabla f|^2 +2\Delta f=\frac{N}{2\tau}+\frac{1}{\tau}[\tau (2\Delta f -|\nabla f|^2+\R )+f]
\end{equation}
Furthermore, we compute the volume form with respect to $g^m$. 
Let $U_{\alpha}, X_{i}$ be a local coordinate on $\mathbb{S}^N$, $M$ respectively, then we have 
\begin{equation}\label{ss:12}
\begin{aligned}
&\sqrt{\det (g^m_{\alpha \beta})}\sqrt{\det (g^m _{ij})}\prod _{\alpha =1} ^N U_{\alpha}^{\ast}\prod _{i=1}^n X_i ^{\ast}\\
&=(1-\frac{2f}{N})^{\frac{N}{2}}\tau ^{\frac{N}{2}}\sqrt{\det (g_{\alpha \beta})}\sqrt{\det (g _{ij})}\prod _{\alpha =1} ^N U_{\alpha}^{\ast}\prod _{i=1}^n X_i ^{\ast}\\
&=\tau ^{\frac{N}{2}}e^{-f}\sqrt{\det (g_{\alpha \beta})}\sqrt{\det (g _{ij})}\prod _{\alpha =1} ^N U_{\alpha}^{\ast}\prod _{i=1}^n X_i ^{\ast}
\end{aligned}
\end{equation}
up to errors of order $\frac{1}{N}$. 
The third equation is deduced by the binomial theorem with respect to $(1-\frac{2f}{N})^N$ and Taylor expansion with respect to $e^{-2f}$ for large $N$. 
One can see that the volume form with respect to $g^m$ is equal to $\tau ^{\frac{N}{2}}e^{-f}$ times the standard volume form on $M\times \mathbb{S}^N$.
\\
\par
To prove the monotonicity of $\mathcal{W}$-entropy, 
we consider the total scalar curvature of geodesic sphere $\mathbb{S}_{\tilde{M}} (r)$ of radius $r$ on $(\tilde{M}, g^m)$. 
The key of the proof is the following lemma:
\begin{lemma}\label{q:3}
Let $(X,G)$ be a complete Ricci flat Riemannian manifold and $r$ a distance function on $X$. 
Then, 
\begin{align*}
\frac{\partial}{\partial r}\log \int _{\mathbb{S}_X (r)}\R _{\mathbb{S}_X (r)}dS_X\leq \frac{\partial}{\partial r}\log \int _{\mathbb{S}_{\mathbb{R}^n}(r)}\R _{\mathbb{S}_{\mathbb{R}^n(r)}}dS_{\mathbb{R}^n} \, . 
\end{align*}
\end{lemma}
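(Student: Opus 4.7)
The starting point is the Gauss equation for the geodesic sphere $\Sigma_r := \mathbb{S}_X(r)$ viewed as a hypersurface in $X$. Since $X$ is Ricci flat, the equation reduces to $\R_{\Sigma_r} = H^2 - |A|^2$, where $H$ and $|A|^2$ are the mean curvature and squared norm of the second fundamental form of $\Sigma_r$ (with outward unit normal $\nu$). Combined with the first variation $\partial_r(dS_X) = H\,dS_X$ and the Riccati identity $\partial_r H = -|A|^2 - \Ric(\nu,\nu) = -|A|^2$, this identifies the total scalar curvature of the geodesic sphere with the second derivative of its volume:
\begin{equation*}
\int_{\Sigma_r} \R_{\Sigma_r}\,dS_X = V''(r), \qquad V(r):=\mathrm{Vol}(\Sigma_r).
\end{equation*}
For $X=\mathbb{R}^n$ one has $V_0(r)=\omega_{n-1}r^{n-1}$ and $V_0''(r)=(n-1)(n-2)\omega_{n-1}r^{n-3}$, so $\partial_r \log V_0''(r) = (n-3)/r$. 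Hence the lemma is equivalent to the monotonicity statement that $V''(r)/r^{n-3}$ is non-increasing in $r$ for any complete Ricci-flat $X$.

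To attack this, I work with the volume density $\phi(r,\theta)$ of the exponential map, so that $V(r) = \int_{S^{n-1}}\phi(r,\theta)\,d\theta$, with $\phi'/\phi = H$ and (by Ricci flatness) $\phi''/\phi = H^2 - |A|^2$. Differentiating once more using the Riccati equation $S' + S^2 + R_\nu = 0$ for the shape operator $S$, together with Newton's identity $H^3 - 3H|A|^2 + 2\tr(S^3) = 6\sigma_3(S)$, one obtains
\begin{equation*}
\phi'''/\phi = 6\sigma_3(S) + 2\tr(SR_\nu),
\end{equation*}
where $R_\nu$ is the Jacobi operator $X \mapsto \Rm(X,\nu)\nu$ on $T\Sigma_r$. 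Integrating against $d\theta$, the desired inequality $rV'''(r) \le (n-3)V''(r)$ becomes
\begin{equation*}
\int_{S^{n-1}} \bigl[3\sigma_3(S) + \tr(SR_\nu)\bigr]\phi\,d\theta \le \frac{n-3}{r}\int_{S^{n-1}}\sigma_2(S)\phi\,d\theta.
\end{equation*}

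The plan is to obtain this comparison by upgrading the Riccati-equation argument underlying Bishop--Gromov by one order. Writing $T := S - r^{-1}I$ for the deviation from the Euclidean shape operator, and using $\tr R_\nu = 0$ from Ricci flatness, one computes $(r^2\tr T)' = -r^2\tr(T^2) \le 0$ with $r^2\tr T \to 0$ as $r \to 0^+$. This infinitesimal identity underlies the first-order Bishop--Gromov comparison: $H \le (n-1)/r$ pointwise and $\phi/r^{n-1}$ non-increasing. I would then search for an analogous second-order monotone quantity built from $T$ and $R_\nu$: a quadratic invariant $Q(T,R_\nu)$ such that
\begin{equation*}
\Phi(r) := \int_{S^{n-1}} r^{n-3}\,Q(T,R_\nu)\,\phi\,d\theta
\end{equation*}
is non-increasing in $r$ and vanishes at $r = 0$. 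Combined with the first-order comparison this would yield the required monotonicity of $V''(r)/r^{n-3}$.

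The main obstacle is identifying the correct quadratic invariant $Q$. Unlike the first-order case, where only the trace of $R_\nu$ entered, here the full Jacobi operator $R_\nu$ (not only $\Ric(\nu,\nu) = 0$) contributes through the term $\tr(SR_\nu)$, and one must exploit the self-adjointness and trace-freeness of $R_\nu$ under Ricci flatness to close the comparison. A further subtlety is that $\sigma_2(S)$ need not be pointwise nonnegative, so the standard Newton--Maclaurin inequalities must be applied to the weighted integrals against $\phi\,d\theta$ rather than pointwise. Consistent with the heuristic tone of Section 5, one should expect the Riemannian geometric thermostat setup to guide the correct combination of curvature terms that closes the argument.
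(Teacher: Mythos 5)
Your proposal is not a complete proof: after reducing the lemma to the monotonicity of $V''(r)/r^{n-3}$ and correctly computing $\phi''' = (6\sigma_3(S) + 2\tr(SR_\nu))\phi$, you explicitly stop at the step of ``searching for'' a second-order monotone quantity $Q(T,R_\nu)$, and you never exhibit one. That is the whole difficulty. Unlike the first-order Bishop--Gromov argument, which closes because $\tr R_\nu = \Ric(\nu,\nu)$ is controlled by the Ricci hypothesis alone, your target inequality involves $\tr(SR_\nu)$, which sees the full curvature operator in the radial direction. Ricci-flatness makes $R_\nu$ trace-free but does not give any sign or size control on it pointwise; since there is no a priori bound on the shape operator $S$ beyond $\tr S \le (n-1)/r$, it is not clear that any algebraic combination of $T = S - r^{-1}I$ and $R_\nu$ will have a sign after integration against $\phi\,d\theta$. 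So the proposal has a genuine gap, and you correctly name it, but you should be aware that it is not a routine gap to fill.

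That said, you should compare this honestly against what the paper actually does. The paper's own ``proof'' of this lemma is explicitly heuristic: after writing the Gauss equation and specializing to the Ricci-flat case, it appeals to Bishop--Gromov to claim that the second fundamental form ``cannot grow larger than Euclidean space's'' and then invokes the fact that, for fixed mean curvature, the totally umbilical hypersurface extremizes the scalar curvature. Neither step is rigorous: Bishop--Gromov under a Ricci bound controls the mean curvature $H$ and the volume density, not the full shape operator $S$, and the umbilical extremum observation at a single $r$ does not by itself yield a comparison of logarithmic derivatives. The paper also has a sign error: with the paper's stated convention ($R_{ijij} > 0$ on the round sphere) the Gauss equation should give $\R_{\mathbb{S}_X(r)} = H^2 - |h|^2$, whereas the paper writes $|h|^2 - H^2$; your version is the correct one. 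Your approach is genuinely different and sharper than the paper's: identifying $\int_{\mathbb{S}_X(r)} \R\, dS_X = V''(r)$ via the first variation and the traced Riccati equation reduces the lemma to the clean analytic statement that $V''(r)/r^{n-3}$ is non-increasing, and your computation of $\phi'''$ using Newton's identity $H^3 - 3H|A|^2 + 2\tr S^3 = 6\sigma_3(S)$ and $S' = -S^2 - R_\nu$ is correct. What the paper's route ``buys'' is brevity at the cost of rigor; what your route buys is a precise target and an explicit identification of the obstruction, but you do not overcome it, so both arguments are incomplete. For the purposes of the paper this may be acceptable, since Section~5 is advertised as heuristic, but you should not present your plan as a proof.
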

\begin{proof}
Let $h_{ij}$ be the scalar second fundamental form. From Gauss equation (see \cite{CLN}, (1.91)), 
\begin{equation*}
(\Rm _{\mathbb{S}_{X}(r)})_{ijkl}=(\Rm _X)_{ijkl}+h_{il}h_{jk}-h_{ik}h_{jl}\, .
\end{equation*}
Hence, 
\begin{equation*}
(\Ric _{\mathbb{S}_{X}(r)})_{jl}=(\Ric _X)_{jl}-(\Rm _X)_{njln}+h_{il}h_{j}^i-Hh_{jl}\, .
\end{equation*}
where $H:=g^{ij}h_{ij}$ is the mean curvature. Moreover, we have
\begin{equation*}
\R _{\mathbb{S}_X(r)}=\R _X-2(\Ric _X)_{nn}+|h|^2-H^2 \, .
\end{equation*}
Since $X$ is Ricci flat, we have
\begin{equation*}
\R _{\mathbb{S}_X(r)}=|h|^2 -H^2 \, .
\end{equation*}
Hence, we can express the scalar curvature of $\mathbb{S}_X(r)$ by the second fundamental form only. 
In the same way, the scalar curvature of $\mathbb{S}_{\mathbb{R}^n}(r)$ can be expressed in terms of $h$ and $H$ because $\mathbb{R}^n$ is flat. 
We now roughly explain the proof of the Lemma 5.1. 
The second fundamental form is the geometric quantity which expresses the variant of the induced metric 
when the hypersurface move in the direction for the outer normal vector. 
Hence, the variant of the second fundamental of hypersurface for the Ricci flat space with respect to $r$ 
cannot grow larger than Euclidean space's by Bishop-Gromov comparison theorem. 
Moreover, if we regard $h$ and $H$ as a functional for the principal curvature, 
then we see that $|h|^2-H ^2$ is maximum when the hypersurface is totally umbilical hypersurface under the condition where $H$ is constant. 
In fact, the geodesic sphere in Euclidean space is a totally umbilical hypersurface. 
We have thus proved Lemma 5.1. 
\end{proof} 
Note that the  hypersurface with respect to $\tau =$const. 
is equal to the geodesic sphere $\mathbb{S}_{\tilde{M}} (r)$ of radius $r$ on $(\tilde{M}, g^m)$ modulo the magnitude $O(1/N)$. 
Indeed, from the computation of $\ref{ss:12}$, the volume form with respect to $g^{m}$ on the hypersurface is equals to 
$\tau ^{\frac{N}{2}} e^{-f}$ times the standard volume form on $M\times \mathbb{S}^{N}$ up to errors of order $\frac{1}{N}$. 
Since the order of radius $r$ is equals to $\tau ^{\frac{1}{2}}$, We can formally apply Lemma 5.1 to the total scalar curvature with respect to $(\tilde{M}, g^m)$. \\
\\
From $(\ref{q:2})$, 
\begin{align*}
&\int _{\mathbb{S}_{\tilde{M}} (r)}\R ^m dS_{\tilde{M}}\\
&=\frac{N+2n}{2\tau}\int \tau ^{\frac{N}{2}}e^{-f}\sqrt{\det (g_{\alpha \beta})}\sqrt{\det (g _{ij})}\prod _{\alpha =1} ^N U_{\alpha}^{\ast}\prod _{i=1}^n X_i ^{\ast}
+(4\pi )^{\frac{n}{2}} \tau ^{\frac{N}{2}+\frac{n}{2}-1}\mathcal{W}.\\
&=(C_1(N,n)+(4\pi )^{\frac{n}{2}}\mathcal{W})\tau^{\frac{N}{2}+\frac{n}{2}-1}.
\end{align*}
where $C_i (N,n)$ are constants depending on $N$ and $n$. 
On the other hand, we have
\begin{align*}
\int _{\mathbb{S}_{\mathbb{R}^{n+N+1}}(r)}\R _{\mathbb{S}_{\mathbb{R}^{n+N+1}(r)}}dS _{\mathbb{R}^{n+N+1}}&=\frac{(N+n)(N+n-1)}{r^2}\int dS\\
&=C_2(N,n)r^{N+n-2}\, . 
\end{align*}
Hence, we see that the $\mathcal{W}$-entropy is increasing for $\tau$ by formally applying the above lemma to $(\tilde{M}, g^m)$ 
which is potentially infinite dimension. 

\end{document}